\newtheorem{theorem}{Theorem}
\newtheorem{proposition}[theorem]{Proposition}
\newtheorem{corollary}[theorem]{Corollary}
\newtheorem{lemma}[theorem]{Lemma}
\theoremstyle{definition}
\newtheorem{remark}{Remark}
\newcommand{\C}{\mathbb{C}}
\newcommand{\R}{\mathbb{R}}
\newcommand{\Z}{\mathbb{Z}}
\newcommand{\Hb}{\mathbf{H}}
\newcommand{\Lb}{\mathbf{L}}
\newcommand{\Cc}{\mathcal{C}}
\newcommand{\Wc}{\mathcal{W}}
\newcommand{\gk}{\mathfrak{g}}
\newcommand{\hf}{\widehat{f}}
\newcommand{\hpsi}{\widehat{\psi}}
\newcommand{\ellb}{\mathbf{\ell}}
\DeclarePairedDelimiterX{\norm}[1]{\lVert}{\rVert}{#1}
\begin{document}

\begin{frontmatter}




\title{On Generalizations of the Nonwindowed Scattering Transform}
\tnotetext[tn]{This work was partially supported by the National Science Foundation [A.C. and M.H by grant DMS-1845856; A.L. and M.H. by grant DMS-1912906], the National Institutes of Health [M.H. by grant NIGMS-R01GM135929], and the Department of Energy [M.H. by grant DE-SC0021152].}


\author[msu]{Albert Chua\corref{coralbert}}
\ead{chuaalbe@msu.edu}
\author[msu,cmse,q]{Matthew Hirn}
\ead{mhirn@msu.edu}
\author[uu]{Anna Little}
\ead{little@math.utah.edu}
\cortext[coralbert]{Corresponding author}
\address[msu]{Department of Mathematics, Michigan State University, East Lansing, MI, 48824 USA}
\address[cmse]{Department of Computational Mathematics, Science \&
Engineering, Michigan State University, East Lansing, MI, 48824 USA}
\address[q]{Center for Quantum Computing, Science \& Engineering
Michigan State University, East Lansing, MI, 48824 USA}
\address[uu]{Department of Mathematics and the Utah Center For
Data Science, University of Utah, Salt Lake City, UT, 84112 USA}

\begin{abstract}
In this paper, we generalize finite depth wavelet scattering transforms, which we formulate as $\Lb^q(\mathbb{R}^n)$ norms of a cascade of continuous wavelet transforms (or dyadic wavelet transforms) and contractive nonlinearities. We then provide norms for these operators, prove that these operators are well-defined, and are Lipschitz continuous to the action of $C^2$ diffeomorphisms in specific cases. Lastly, we extend our results to formulate an operator invariant to the action of rotations $R \in \text{SO}(n)$ and an operator that is equivariant to the action of rotations of $R \in \text{SO}(n)$. 
\end{abstract}

\begin{keyword}
Wavelets, Wavelet Scattering Transform, Deformation Stability


\end{keyword}

\end{frontmatter}


\section{Introduction}
In recent years, convolutional neural networks have shown strong performance on various vision tasks like image classification \cite{Alexnet, VGG, Googlenet, resnet}. The main reason for this is that they are able to capture information at multiple scales through the use of convolutions and pooling. However, the exact method in which these networks use this information is not understood very well. 

In \cite{mallat2012group}, the author proposed a formulation for a simpler model for a convolutional neural network through the use of handcrafted filters, wavelets, and a series of cascading wavelet transforms. This model, called the scattering transform, and its extensions have shown success in vision tasks, quantum chemistry, manifold learning, and graph-related tasks \cite{graph, geometric, QMWavelet1, QMWavelet2, QMWavelet3}.

We first provide a review of scattering transforms to motivate this paper. Let  $\phi:\mathbb{R}^n \to \mathbb{R}$ be a low pass filter ($\hat{\phi}(0) \neq 0$) , $\psi:\mathbb{R}^n \to \mathbb{C}$ a suitable mother wavelet ($\hat{\psi}(0) = 0$), and $G^{+}$ be a set of “positive” rotations with determinant $1$. Define a set of rotations and dilations by
\begin{equation}\Lambda_J := \{\lambda = 2^{j}r : r \in G^{+}, j > -J \} \text{ if } J \neq \infty
\end{equation}
and 
\begin{equation}\Lambda_\infty := \{2^{j}r: r \in G^{+}, \,j \in \mathbb{Z}\}.
\end{equation}
Let $\lambda = 2^j r \in \Lambda_J$. Consider the operator
\begin{equation}
U[\lambda] = \left|\int_{\mathbb{R}^n} f(u) 2^{nj}\psi(2^jr^{-1}(x-u)) \, du \right|
\end{equation}
For a tuple of rotations and dilations in $\Lambda_J$, define a path of length $m$ as the tuple $p := (\lambda_1, \ldots, \lambda_m)$ and let $\mathcal{P}_J$ be the set of all finite paths. The scattering propagator for $f \in \Lb^2(\mathbb{R}^n)$ and $p \in \mathcal{P}_J$ is  
\begin{equation}
U[p]f := U[\lambda_m] \cdots U[\lambda_1] f,
\end{equation}
which gathers high frequency information via a cascade of wavelet transforms and nonlinearities. The scattering operator is 
\begin{equation}
\overline{S}{f}(p) = \frac{1}{\mu_p} \int_{\mathbb{R}^n} U[p]f(x) \, dx
\end{equation}
with $\mu_p := \int_{\mathbb{R}^n} U[p]\delta(x) \,dx.$ Additionally, to aggregate features similar to pooling, the author of \cite{mallat2012group} define the scattering operator for $f \in \Lb^2(\mathbb{R}^n)$ and $p \in \mathcal{P}_J$ as
\begin{equation}
S_J[p]f(x) = \int_{{\R}^n} U[p]f(u) 2^{-nJ}\phi(2^{-J}(x-u))\, du.  
\end{equation}
Additionally, the windowed scattering transform is the set of functions
\begin{equation}
S_J[\mathcal{P}_J]f = \{S_J[p]f\}_{p \in \mathcal{P}_J}.
\end{equation}
This operator is similar to a convolution neural network because along each path (analogous to each layer of a convolutional neural network) a convolution, a nonlinearity is applied, and feature aggregation occurs via the low pass filter. The scattering norm for any set of paths $\Omega$ is 
\begin{equation}
\|S_J[\Omega]f\|^2 = \sum_{p \in \Omega} \|S_J[p]f\|^2_2.
\end{equation}

Under very stringent conditions on the mother wavelet, the author of \cite{mallat2012group} was able to prove an isometry property for the windowed scattering transform. However, the problem with the admissibility condition in \cite{mallat2012group} is that there are very few classes of wavelets that are admissible. The author of \cite{mallat2012group} mentions an analytic cubic spline Battle-Lemari{\'e} wavelet is admissible in one dimension, but provides no other examples. On a related note, \cite{waldspurger2017exponential} has shown that scattering coefficients have exponential decay for $n = 1$ under relatively mild assumptions, but her proof only applies for $n = 1$, which makes the admissibility condition still necessary for $n \geq 2$. Additionally, to our knowledge, there are no examples in the literature of wavelets that satisfy the admissibility condition when $n > 1$.  

The windowed scattering transform has three important properties that are helpful for certain machine learning tasks. The first two are the following:
\begin{enumerate}
    \item The windowed scattering transform is a well-defined mapping on $\Lb^2(\mathbb{R}^n)$ and nonexpansive. In particular, for all $f, h \in \mathbf{L}^2(\mathbb{R}^n)$,
    \begin{equation}\|S_J[P_J]f - S_J[P_J]h\| \leq \|f-h\|_2.
    \end{equation}
    \item Let the translation of a function be denoted as $L_cf(u) = f(u-c).$ For certain classes of wavelets, we have
    \begin{equation}
    \lim_{J \to \infty} \|S_J[P_J]f - S_J[P_J]L_cf\| = 0   
    \end{equation}
    for all $c \in \mathbb{R}^n$ and for all $f \in \Lb^2(\mathbb{R}^n)$. One can think of this as local translation invariance.
\end{enumerate}

Finally, for the last property, the following definition was used in \cite{mallat2012group} for Lipschitz continuity to the action of $C^2$ diffeomorphisms. Let $\mathcal{H}$ be a Hilbert space, $\tau \in C^2$, and define the operator $L_\tau f(x) = f(x -\tau (x))$. A translation invariant operator $\Phi$ is said to be Lipschitz continuous to the action of $C^2$ diffeomorphisms if for any compact $\Omega \subset \mathbb{R}^n$, there exists $C_\Omega$ such that for all $f \in \Lb^2(\mathbb{R}^n)$ supported in $\Omega$ and all $\tau \in C^2(\mathbb{R}^n)$, we have
\begin{equation} \label{diffeo stability}
\|\Phi(f) - \Phi(L_\tau f) \|_{\mathcal{H}} \leq C_\Omega\left(\|D\tau\|_\infty + \|D^2\tau\|_\infty\right)\|f\|_2.
\end{equation} The idea is that the difference in norm is proportional to the size of $\|D\tau\|_\infty + \|D^2\tau\|_\infty$, which indicates how much $L_\tau$ deforms $f$. In particular, the author of \cite{mallat2012group} show that \eqref{diffeo stability} holds for the windowed scattering transform.

The concept of stability to diffeomorphisms has become a major point of study after the publication of \cite{mallat2012group}. Based on the definition above, there has been a lot of interest in exploring the stability of various operators related to machine learning and data science. For example, \cite{geometric, gama} extend the scattering transform and stability of the scattering transform to graphs and compact Riemannian manifolds, respectively; the authors in \cite{nicola} loosen the restriction on the regularity of $\tau$. Other papers explore stability for different operators with desirable properties for machine learning \cite{wiatowski, koller, czaja, bietti}. 

Although much work has appeared in recent years about operators similar to the scattering transform and about generalizations of the scattering transform, there are still some loose ends left in \cite{mallat2012group} that have not been explored yet. First, while the author of \cite{mallat2012group} does explore creating a norm for the nonwindowed scattering transform, he does not actually prove the norm is stable to diffeomorphisms. 
We consider a less stringent definition for stability to diffeomorphisms in the same spirit as the definition in \cite{mallat2012group} for this paper. Let $V_1$ and $V_2$ be normed vector spaces. Then we say that a translation invariant operator $\Phi: V_1 \to V_2$ is said to be Lipschitz continuous to the action of $C^2$ diffeomorphisms if for any compact $\Omega \subset \mathbb{R}^n$, there exists $C_{\Omega,\tau}$ such that for all $f \in V_1$ supported in $\Omega$ and all $\tau \in C^2(\mathbb{R}^n)$, we have
\begin{equation}
\|\Phi(f) - \Phi(L_\tau f) \|_{V_2} \leq C_{\Omega, \tau}\|f\|_{V_1},
\end{equation} where $C_{\Omega, \tau} \to 0$ as $\|D\tau\|_\infty + \|D^2\tau\|_\infty \to 0.$ Like with equation \eqref{diffeo stability}, $\|\Phi(f)-\Phi(L_\tau f)\|_{V_2}$ depends on $\|D\tau\|_\infty + \|D^2\tau\|_\infty$. 

Using this definition, we consider a slightly different problem than the author of \cite{mallat2012group} did for the nonwindowed scattering transform. The scattering transform introduced in \cite{mallat2012group} was a collection of $\Lb^1 (\R^n)$ norms of various cascades of dyadic wavelet convolutions and modulus nonlinearities applied to a signal. Here, we extend the definition of the scattering transform to the continuous wavelet transform and for $\Lb^q (\R^n)$ norms with $q \in [1,2]$. For a continuous dilation parameter $\lambda \in \R_+$ we define the dilations of $\psi$ as:
\begin{equation*}
    \forall \, \lambda \in \R_+ \, , \quad \psi_{\lambda} (x) := \lambda^{-n/2} \psi (\lambda^{-1} x) \, ,
\end{equation*}
which preserves the $\Lb^2 (\R^n)$ norm of $\psi$:
\begin{equation*}
    \| \psi_{\lambda} \|_2 = \| \psi \|_2 \, , \quad \forall \, \lambda \in \R_+ \, .
\end{equation*} For the continuous wavelet transform, the one layer wavelet scattering transform with $\Lb^q (\R^n)$ norm is the function $S_{\text{cont},q} : \R_+ \rightarrow \R$ defined as:
\begin{equation}
    \forall \, \lambda \in \R_+ \, , \quad S_{\text{cont},q} f (\lambda) := \| f \ast \psi_{\lambda} \|_q \, .
\end{equation}

For a dyadic dilation parameter $j \in \Z$ we define dilations of $\psi$ as:
\begin{equation*}
    \forall \, j \in \Z \, , \quad \psi_j (x) = 2^{-nj} \psi (2^{-j} x) \, ,
\end{equation*}
which preserves the $\Lb^1 (\R^n)$ norm of $\psi$:
\begin{equation*}
    \| \psi_j \|_1 = \| \psi \|_1 \, , \quad \forall \, j \in \Z \, .
\end{equation*} The one layer wavelet scattering transform for the dyadic wavelet transform is the function $S_{\text{dyad},q} f : \Z \rightarrow \R$ defined as:
\begin{equation}
    \forall \, j \in \Z \, , \quad S_{\text{dyad},q} f(j) := \| f \ast \psi_j \|_q \, .
\end{equation}

More generally, the $m$-layer wavelet scattering transforms $S_{\text{cont},q}^m f: \R_+^m \rightarrow \R$ and $S_{\text{dyad},q}^m f: \Z^m \rightarrow \R$ are defined as
\begin{align}
    S_{\text{cont},q}^m f(\lambda_1, \ldots, \lambda_m) &:= \| ||f \ast \psi_{\lambda_1}| \ast \psi_{\lambda_2}| \ast \cdots |\ast \psi_{\lambda_m} \|_q \, , \\
    S_{\text{dyad},q}^m f(j_1, \ldots, j_m) &:= \| ||f \ast \psi_{j_1}| \ast \psi_{j_2}| \ast \cdots |\ast \psi_{j_m} \|_q \, .
\end{align}
 
This is similar to working with a windowed scattering transform with a finite number of layers. However, our operator is different from the operator $S_J$ in \cite{mallat2012group} because it does not contain the filter $A_J$ to aggregate low frequency information, so the scale parameter in our formulation is not bounded above or below. Additionally, because the averaging filter is replaced $\Lb^q (\R^n)$ norms, our representation is fully translation invariant rather than translation invariant as $J \to \infty$.

As for the significance of using $\Lb^q (\R^n)$ norms to replace the averaging filter, there is one area with direct application: quantum energy regression tasks \cite{QMWavelet1}, where a representation that is similar to the rotation invariant representation in Section 6.2 has already been used for quantum energy regression. 

Given a configuration of atoms, we would like to estimate the ground state energy of the configuration. Suppose we have a molecule with $K$ atoms with nuclear charges $z_k$ and nuclear positions $p_k$ with $k = 1, \ldots, K$.  The state $x$ of a molecule is given by
\begin{equation}
x = \{(p_k, z_k) \in \mathbb{R}^3 \times \mathbb{R}\, : \, k = 1 \ldots, K\},
\end{equation} Due to how we have defined our state, we would like our representation to have the following properties:
\begin{itemize}
    \item \textbf{Permutation Invariance}: the energy should not depend on the index of the molecules.
    \item \textbf{Deformation Stability}: small deformations of the molecule should only lead to small changes in energy of the system.
    \item \textbf{Isometry Invariance}: the energy should be invariant to group actions such as translations, rotations, and other general isometries.
    \item \textbf{Multiscale Interactions}: molecules have many interactions terms, and these interaction terms depend on the pairwise distance between atoms (i.e. short range covalent bonds and longer range Van Der Waals interactions). 
\end{itemize}

The rotation invariant version of our scattering transform in section 6 satisfies permutation invariance, deformation stability, and has multiscale interactions based on the proofs we've provided. We do not prove isometry invariance, but the operator is rotation and translation invariant.

Motivated by DFT theory, the paper \cite{QMWavelet1} uses a dictionary of one and two layer scattering norms with $q = 1$ and $q = 2$ to get (at the time) state-of-the-art results for energy regression tasks for planar molecules. In particular, scattering operators with $q = 1$ scaled with the number of atoms in the system and $q =2$ encoded pairwise interactions. The motivation for using $1 < q < 2$ comes from \cite{QMWavelet2, QMWavelet3}, which based on the Thomas–Fermi–Dirac–von Weizs{\"a}cker model \cite{CANCES20033}, also use scattering norms with $q = 4/3, 5/3$. Later papers, like \cite{QMWavelet2, QMWavelet3}, use a similar representation, involving spherical harmonics, for 3D quantum energy regression.

Generalizing to stochastic processes, one can also consider scattering moments \cite{mallat2012group, bruna2013audio}, which have similar desirable properties as the nonwindowed scattering transform. Applications include, but are not limited to, audio texture synthesis \cite{bruna2013audio} and cosmology \cite{allys2019rwst}. The main idea in all these applications is that the nonwindowed scattering transform has desirable mathematical properties and provides a small number of relevant descriptors for high dimensional, complicated data.

\begin{remark}
We can replace all the modulus operators with any contraction mapping (or use different contraction mappings in each layer) in the definition above, and all the proofs in the rest of this paper will still work. In particular, the modulus can be replaced with a complex version of the rectified linear unit (ReLU) nonlinearity, $\max(0, \text{Re}(a_{i}))_{i = 1, \ldots, n}$ for $a \in \mathbb{C}^n$, which is a popular choice for complex neural networks. Nonetheless, we will use the modulus operator throughout this paper without any loss of generality.
\end{remark}

We provide a general roadmap for this paper. Section 2 will cover notation, basic properties about wavelets and the wavelet scattering operator, and harmonic analysis that will be necessary for the paper. In Section 3, we provide norms for an $m$-layer wavelet scattering transforms and prove that the operators are well defined mappings into specific spaces when $1 \leq q \leq 2$. For Section 4, we explore conditions under which the $m$-layer scattering transform is stable to dilations, and we generalize our results to diffeomorphisms in Section 5. Lastly, in Section 6, we formulate two new translation invariant operators that are stable to diffeomorphisms. The first is rotation equivariant, and the second is rotation invariant. Our contributions include, but are not limited to, the following:
\begin{itemize}
    \item We formulate an extension of the dyadic wavelet scattering operator for a finite, arbitrary number of layers with parameter $q \in [1,2]$ by applying $\Lb^q(\mathbb{R}^n)$ norms instead of $\Lb^1(\mathbb{R}^n)$ norms. Additionally, we formulate a wavelet scattering operator with $q \in [1,2]$ that uses a continuous scale parameter, like the continuous wavelet transform.
    \item We create a new finite depth scattering norm using dyadic and continuous scales in the case when $q \in [1,2]$, and prove that the mappings are well defined and provide theoretical justification for a broader class of wavelets that make the scattering transform Lipchitz continuous to the action of $C^2$ diffeomorphisms. However, the trade-off is that our stability bound depends on the number of layers.
    \item We provide a condition for norm equivalence in the case of $q = 2$ that is less stringent.
    \item In the case of $q \in( 1,2]$, we prove that our norm is stable to diffeomorphisms $\tau \in C^2(\mathbb{R}^n)$ provided that $\|\tau\|_\infty < \tfrac{1}{2n}$ and the wavelet and its first and second partial derivatives have sufficient decay. In the case of $q = 1$, we show stability to dilations.
    \item We extend our formulation to include invariance or equivariance to the action of rotations $R \in \text{SO}(n)$.
\end{itemize}

\section{Notation and Basic Properties}
\label{sec: notation and basic properties}
We start by providing basic notation that we will use in this paper and proceed to give basic definitions and properties that will be necessary for our results.

\subsection{Function Spaces}
Set $\R_+$ to be the positive real numbers, i.e. $\R_+ := (0, \infty).$ The gradient of a function  $f : \R^n \rightarrow \C$ is given by $\nabla f$, the Jacobian of a function $f: \R^n \to \R^m$ is given by $Df$, and the Hessian is given by $D^2f$. For $1\leq q < \infty$, the $\Lb^q (\R^n)$ norm of a function $f : \R^n \rightarrow \C$ is $\| f \|_q := \left[ \int_{\R^n} |f(x)|^q \, dx\right]^{1/q}.$ When $q = \infty$, $\|f\|_\infty := \text{ess sup} |f|.$ We will also use the notation, $\|\Delta f\|_\infty = \sup_{x, y \in \mathbb{R}^d} |f(x)-f(y)|$, which should not be mistaken for applying a Laplacian operator. Greek letters with a vector symbol, such as $\vec{\alpha} = (\alpha_1, \cdots, \alpha_n)$, will be a multi-index of nonnegative integers; additionally, we write $|\vec{\alpha}| = \alpha_1+ \cdots + \alpha_n$, and the usage will be clear from context. The operator $D^{\vec{\alpha}}$ is a multi-index of derivatives: $D^{\vec{\alpha}} f =  \frac{ \partial^{|\vec{\alpha}|} }{\partial x_1^{\alpha_1} \cdots \partial x_n^{\alpha_n} } f$. For integer $s\geq 0$, we define the function space
$\mathbf{H}^s(\mathbb{R}^n) = \{ f \in \mathbf{L}^2(\mathbb{R}^n): D^{\vec{\alpha}}f \in \mathbf{L}^2(\mathbb{R}^n) \text{ for } |\vec{\alpha}| \leq s \}.$ 

The Fourier transform of a function $f \in \Lb^1 (\R^n)$ is the function $\hf \in \Lb^{\infty} (\R^n)$ defined as:
\begin{equation*}
    \forall \, \omega \in \R^n \, , \quad \hf (\omega) := \int_{\R^n} f(x) e^{-i x \cdot \omega} \, dx \, .
\end{equation*} 
The Hilbert transform of a function $f \in \mathbf{L}^1(\mathbb{R})$ is denoted by $Hf$ and is defined as:
\begin{equation*}
    Hf(x) := \lim_{\epsilon \rightarrow 0} \int_{|x - y| > \epsilon} \frac{f(y)}{x-y} \, dy \, .
\end{equation*}
The map $H$ is a convolution operator in which $f$ is convolved against the function $1/x$. We note that 
\begin{equation*}
    H : \Lb^q (\R) \rightarrow \Lb^q (\R) \, , \quad \forall \, 1 < q < \infty \, ,
\end{equation*}
however the result is not true for $q = 1$, i.e., if $f \in \Lb^1 (\R)$ it is not necessarily true that $Hf \in \Lb^1 (\R)$. We thus introduce the Hardy space. We denote the Hardy space as $\Hb^1 (\R)$ and it consists of those functions $f \in \Lb^1 (\R)$ such that $Hf \in \Lb^1(\R)$ as well. For $f \in \Hb^1 (\R)$ the Hardy space norm is $\| f \|_{\Hb^1 (\R)}$, which we define as (see Corollary 2.4.7 of \cite{grafakosmodern})
\begin{equation} \label{eqn: hardy space norm}
    \| f \|_{\Hb^1 (\R)} := \| f \|_1 + \| H f \|_1 \, .
\end{equation}
One can show that if $f \in \Hb^1 (\R)$, then $f$ must necessarily have zero average. An important property of the Hilbert transform and convolution is the following:
\begin{equation*}
    H(f \ast g) = Hf \ast g = f \ast Hg \, , \quad f \in \Lb^p (\R) \, , \, g \in \Lb^q (\R) \, , \quad 1 < \frac{1}{p} + \frac{1}{q} \, .
\end{equation*}
We have a similar definition for Hardy spaces when $n \geq 2$. For $1 \leq j \leq n$, define the $j^{\text{th}}$ Riesz transform as
\begin{equation} \label{eqn: hardy space norm riesz}
    R_jf(x) = \lim_{\varepsilon \to 0} \int_{|x-y|>\varepsilon} \frac{x_j-y_j}{|x-y|^{n+1}} f(y) \, dy \, ,
\end{equation}
where $x = (x_1, \ldots, x_n)$ and $y = (y_1, \ldots, y_n)$. The Hardy space $f \in \Hb^1 (\R^n)$ consists of functions $f$ such that $f \in \Lb^1 (\R^n)$ and $R_jf \in \Lb^1(\R^n)$ for $1 \leq j \leq n$ as well.
For $f \in \Hb^1 (\R^n)$ the Hardy space norm is $\| f \|_{\Hb^1 (\R^n)}$, which we define as (see Corollary 2.4.7 of \cite{grafakosmodern})
\begin{equation} \label{eqn: hardy space norm Riesz}
    \| f \|_{\Hb^1 (\R^n)} := \| f \|_1 + \sum_{j = 1}^n\| R_j f \|_1 \, .
\end{equation} 

\subsection{Wavelets}
We let $\psi \in \Lb^1 (\R^n) \cap \Lb^2 (\R^n)$ be a wavelet, which means it is a function that is localized in both space and frequency and has zero average, i.e.,
\begin{equation*}
    \int_{\R^n} \psi (x) \, du = 0 \, .
\end{equation*}

Assume $f \in \Lb^2(\mathbb{R}^n)$. The continuous wavelet transform $\Wc f \in \Lb^2 (\R^n \times \R_+)$ is defined as:
\begin{equation*}
    \forall \, (x, \lambda) \in \R^n \times \R_+ \, , \quad \Wc f(x,\lambda) := f \ast \psi_{\lambda} (x) \, .
\end{equation*}
Furthermore, if $\psi$ satisfies the following admissibility condition
\begin{equation} \label{def: continuous littlewood paley admissible}
    \int_0^\infty \frac{|\hpsi (\lambda \omega)|^2}{\lambda} d \lambda = \mathcal{C}_{\psi} \, , \quad \forall \, \omega \in \mathbb{R}^n \setminus\{0\} \, ,
\end{equation} 
for some $\mathcal{C}_{\psi} > 0$, then we will say that $\psi$ is a Littlewood-Paley wavelet for the continuous wavelet transform. If $\psi$  satisfies (\ref{def: continuous littlewood paley admissible}), one can show that the norm $\Wc f$ computed with a weighted measure $(dx, d\lambda / \lambda^{n + 1})$ on $\R^n \times \R_+$ is well defined:
\begin{equation*}
    \| \Wc f \|_{\Lb^2 (\R^n \times \R_+)}^2 := \int_0^{\infty} \int_{\R^n} |\Wc f(x,\lambda)|^2 \, dx \, \frac{d\lambda}{\lambda^{n+1}} = \int_0^{\infty} \int_{\R^n} |f \ast \psi_{\lambda} (x)|^2 \, dx \, \frac{d\lambda}{\lambda^{n+1}} = \int_0^{\infty} \| f \ast \psi_{\lambda} \|_2^2 \, \frac{d\lambda}{\lambda^{n+1}} \, .
\end{equation*}
We note, in fact, that one can show:
\begin{equation*}
    \| \Wc f \|_{\Lb^2 (\R^n \times \R_+)}^2 = \beta \cdot \Cc_{\psi} \| f \|_2^2 \, .
\end{equation*}
where
\begin{equation} \label{eqn: beta definition}
    \beta = \left\{
    \begin{array}{ll}
        1/2 & \text{if }\psi \text{ is real valued} \\
        1 & \text{if } \psi \text{ is complex valued}
    \end{array}
    \right. \, .
\end{equation}

For a function $f \in \Lb^2 (\R^n)$ we define the dyadic wavelet transform $Wf \in \ellb^2 (\Lb^2 (\R^n))$ as
\begin{equation*}
    Wf = \left( f \ast \psi_j \right)_{j \in \Z} \,.
\end{equation*}
If $\psi$ satisfies
\begin{equation} \label{eqn: dyadic littlewood paley admissible}
    \sum_{j \in \mathbb{Z}} |\hpsi(2^{j}\omega)|^2 = \hat{C}_\psi, \quad \forall \omega \in \mathbb{R}^n \setminus\{0\} \, ,
\end{equation}
for some $\hat{C}_\psi > 0$, then we will say that $\psi$ is a Littlewood-Paley wavelet for the dyadic wavelet transform. If $\psi$  satisfies (\ref{eqn: dyadic littlewood paley admissible}), one can show that the norm $W f$ given below is well defined:
\begin{equation*}
    \| Wf \|_{\ellb^2 (\Lb^2 (\R))}^2 := \sum_{j \in \Z} \| f \ast \psi_j \|_2^2 \, .
\end{equation*}
In fact, we have the following norm equivalence:
\begin{equation*}
    \| Wf \|_{\ellb^2 (\Lb^2 (\R))}^2 = \beta \cdot \hat{C}_{\psi} \| f \|_2^2 \, ,
\end{equation*}
where $\beta$ is defined in \eqref{eqn: beta definition}.

\subsection{Operator Valued Spaces}
Consider a Banach space $\mathcal{B}$. Suppose $f: \mathbb{R}^n \to \mathcal{B}$ and $x \to \|f(x)\|_\mathcal{B}$ is measurable in the Lebesgue sense. Define $\Lb^p_\mathcal{B}(\mathbb{R}^n)$ for $1 \leq p < \infty$ to be
\begin{equation*}
    \|f\|_{\Lb^p_\mathcal{B}(\mathbb{R}^n)}^p =  \int_{\mathbb{R}^n}\|f(x)\|_\mathcal{B}^p \, dx \, .
\end{equation*}
Also, for $1 \leq p < \infty$, define
\begin{equation*}
    \|f\|_{\Lb_\mathcal{B}^{p,\infty}(\mathbb{R}^n)} = \sup_{\delta > 0} \delta \cdot m(\{x \in \mathbb{R}^n : \|f(x)\|_{\mathcal{B}} > \delta\})^{1/p} \, .
\end{equation*}
We also have the following relation:
\begin{equation*}
    \|f\|_{\Lb_\mathcal{B}^{p,\infty}(\mathbb{R}^n)} \leq \|f\|_{\Lb^p_{\mathcal{B}}(\mathbb{R}^n)} \, .
\end{equation*}
Note that for $f: \mathbb{R}^n \to \mathbb{R}^n$, 
\begin{equation*}
    \|f\|_{\Lb^p_{\mathbb{R}^n}(\mathbb{R}^n)}^p =  \int_{\mathbb{R}^n}\|f(x)\|_{\mathbb{R}^n}^p \, dx = \int_{\mathbb{R}^n}|f(x)|^p \, dx = \|f\|_p^p \, .
\end{equation*}

\section{Wavelet Scattering is a Bounded Operator}
In this section we explore for which $q > 0$ and $m \geq 1$ the wavelet scattering transforms $S_{\text{cont},q}^m f$ and $S_{\text{dyad},q}^m f$ are well-defined as functions in some Banach space (i.e., have finite norm), and under what circumstances. 

Let $\psi$ be a wavelet. We assume that $\psi$ has the following properties:
\begin{equation}\label{eqn: decay condition}
|\psi(x)| \leq A(1+|x|)^{-n-\varepsilon} 
\end{equation}
\begin{equation} \label{eqn: holder condition}
\int_{\mathbb{R}^n} |\psi(x-y)-\psi(x)|\, dx \leq A|y|^{\varepsilon'} \, ,
\end{equation}
for some constants $A, \varepsilon', \varepsilon > 0$ and for all $h \neq 0$. 

Consider the Littlewood-Paley $G$-function
\begin{equation}
    G_\psi(f)(x) = \left(\int_{(0, \infty)}|f \ast t^{-n}\psi(x/t)|^2 \frac{dt}{t}\right)^{1/2} \, .
\end{equation}
Let $\mathcal{B} = \Lb^2\left((0, \infty), \frac{dt}{t}\right)$. We can rewrite this as a Bochner integral by considering the function $K(x) = (t^{-n/2}\psi_t(x))_{t>0}$. This is a mapping $K:\mathbb{R}^n \to \mathcal{B}$ and the function $x \to \|K(x)\|_\mathcal{B}$ is measurable. Also, if we let
\begin{equation*}
    \mathcal{T}(f)(x) = \left(\int_{\mathbb{R}^n}t^{-n/2}\psi_t(x-y)f(y) \, dy\right)_{t > 0} = \left((t^{-n/2}\psi_t \ast f)(x)\right)_{t > 0} \, ,
\end{equation*}
we observe that
\begin{equation*}
    G_\psi(f)(x) = \|\mathcal{T}(f)(x)\|_\mathcal{B}
\end{equation*}
and
\begin{equation*}
    \|G_\psi(f)\|_p^p = \|\mathcal{T}(f)\|_{L^p_\mathcal{B}(\mathbb{R}^n)}^p \, .
\end{equation*}
From Problem 6.1.4 of \cite{grafakos}, the two properties above for the wavelet $\psi$ imply that 
\begin{equation} \label{eqn: bochner decay condition}
    \|K(x)\|_\mathcal{B} \leq \frac{c_n A}{|x|^n} \, ,
\end{equation}
and 
\begin{equation} \label{eqn: bochner holder condition}
\sup_{y \in \mathbb{R}^n \setminus \{0 \}}\int_{|x| \geq 2|y|} \|K(x - y) - K(x)\|_\mathcal{B} dx \leq c_n'A \, ,
\end{equation}
where $c_n$ and $c_n'$ depend only on $n$, $\varepsilon$, and $\varepsilon'$. We will omit the dependence on $\varepsilon$ and $\varepsilon'$ throughout the rest of this paper, and this will have no effect on any of our proofs.

\begin{remark}
For the rest of this paper, we will write $G$ in place of $G_\psi$ when referring to the $G$-function because the dependence on the mother wavelet is clear.
\end{remark}

\begin{remark}
Note that \eqref{eqn: holder condition} holds under the alternative condition
\begin{equation}
|\nabla \psi(x)| \leq A(1+ |x|)^{-n-1-\epsilon'}.
\end{equation} This is a consequence of Mean Value Theorem.
\end{remark}

We have the following result taken from Problem 6.1.4 of \cite{grafakos} and from Chapter V of \cite{vectorvaluedinequalities}.

\begin{lemma}[\cite{grafakos, vectorvaluedinequalities}] \label{thm: LP g-function Lp bounded}
Assume that $\psi$ is defined as above and satisfies (\ref{eqn: bochner decay condition}) and (\ref{eqn: bochner holder condition}). Then the operator $G$ is bounded from $\Lb^2(\mathbb{R}^n)$ to $\Lb^2(\mathbb{R}^n)$. Also, for $p \in (1, \infty)$ and $\mathcal{B} = \Lb^2(\mathbb{R}_+, dt/t)$, we have
\begin{equation}
    \|\mathcal{T}f\|_{\Lb^p_\mathcal{B}(\mathbb{R}^n)} \leq C_n A \max(p, (p-1)^{-1}) \|f\|_{\Lb^p(\mathbb{R}^n)} \, ,
\end{equation}
for some $C_n$. For all $f \in \Lb^1(\mathbb{R}^n)$, we also have
\begin{equation}
    \|\mathcal{T}f\|_{\Lb_\mathcal{B}^{1,\infty}(\mathbb{R}^n)}\leq  C_n' A \|f\|_{\Lb^1(\mathbb{R}^n)}
\end{equation}
and
\begin{equation}
    \|\mathcal{T}f\|_{\Lb_\mathcal{B}^{1}(\mathbb{R}^n)} \leq  C_n' A \|f\|_{\Hb^1(\mathbb{R}^n)} \, ,
\end{equation}
for some $C_n'$. 
\end{lemma}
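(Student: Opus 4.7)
The plan is to recognize $T$ as a Banach-valued singular integral operator with kernel $K(x) = (\psi_t(x))_{t>0}$ taking values in $\mathcal{B} = \Lb^2((0,\infty), dt/t)$. Hypotheses \eqref{eqn: bochner decay condition} and \eqref{eqn: bochner holder condition} are precisely the Calder\'on--Zygmund size and regularity estimates for this $\mathcal{B}$-valued kernel, so once $\Lb^2$ boundedness of $G$ is in hand, everything else is driven by the vector-valued Calder\'on--Zygmund machinery.

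First I would establish $\Lb^2$ boundedness of $G$. By Fubini and Plancherel,
\begin{equation*}
\|G(f)\|_2^2 = \int_0^\infty \|f \ast \psi_t\|_2^2 \, \frac{dt}{t} = c_n \int_{\R^n} |\hf(\omega)|^2 \left(\int_0^\infty |\hpsi(t\omega)|^2 \, \frac{dt}{t} \right) d\omega .
\end{equation*}
The substitution $s = t|\omega|$ reduces the inner integral to $\int_0^\infty |\hpsi(s\omega/|\omega|)|^2 \, ds/s$, which is bounded uniformly in $\omega$: the zero average of $\psi$ yields $|\hpsi(\xi)| \lesssim |\xi|$ near the origin, while the H\"older condition \eqref{eqn: holder condition} yields $|\hpsi(\xi)| \lesssim |\xi|^{-\varepsilon'}$ at infinity, so $s \mapsto |\hpsi(s\xi)|^2 / s$ is uniformly integrable over $\xi$ on the unit sphere. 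This produces $\|G(f)\|_2 \leq B \|f\|_2$ with $B$ finite.

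Next, for the $\Lb^p$ and weak-type $(1,1)$ bounds, I would run the Calder\'on--Zygmund decomposition of $f$ at height $\alpha$, writing $f = g + b$ with $b = \sum_j b_j$ supported on pairwise disjoint cubes $Q_j$. The good part is controlled by the $\Lb^2$ bound together with Tchebyshev's inequality. For the bad part, the H\"ormander-type estimate
\begin{equation*}
\int_{|x| > 2|y|} \|K(x - y) - K(x)\|_{\mathcal{B}} \, dx \leq c_n' A ,
\end{equation*}
which is immediate from \eqref{eqn: bochner holder condition}, lets one bound $\sum_j \|T b_j\|_{\Lb^1_\mathcal{B}(\R^n \setminus Q_j^*)}$ by a constant multiple of $\|f\|_1$, where $Q_j^*$ is the natural enlargement of $Q_j$. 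Combining yields the weak-type $(1,1)$ bound with constant $C_n'(A+B)$. Marcinkiewicz interpolation between this bound and the $\Lb^2$ bound produces the $\Lb^p$ estimate for $1 < p < 2$, with the factor $\max(p, (p-1)^{-1})$ arising from the sharp interpolation constants; duality (using that $\mathcal{B}$ is a Hilbert space, hence self-dual) extends the range to $2 < p < \infty$.

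Finally, the $\Hb^1 \to \Lb^1_\mathcal{B}$ bound follows by atomic decomposition of $\Hb^1(\R^n)$: it suffices to check that for every $\Hb^1$-atom $a$ supported in a ball $B(x_0, r)$ with $\|a\|_\infty \leq |B(x_0,r)|^{-1}$ and $\int a = 0$ one has $\|Ta\|_{\Lb^1_\mathcal{B}(\R^n)} \leq C_n'(A+B)$ uniformly. On $B(x_0, 2r)$ the $\Lb^2$ bound combined with Cauchy--Schwarz gives the estimate; on the complement, the vanishing mean of $a$ together with the H\"ormander estimate above supplies the decay. The main technical obstacle is verifying that every step of the scalar Calder\'on--Zygmund and atomic-decomposition arguments transfers unchanged to the $\mathcal{B}$-valued setting, which is precisely the content of Chapter V of \cite{vectorvaluedinequalities}; the fact that $\mathcal{B}$ is a Hilbert space makes this transfer routine and circumvents the usual UMD considerations.
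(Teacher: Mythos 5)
This lemma is not proved in the paper at all: it is imported verbatim from Problem 6.1.4 of \cite{grafakos} and Chapter V of \cite{vectorvaluedinequalities}, so there is no in-paper argument to compare against. Your sketch is a correct reconstruction of the standard proof in those references --- viewing $T$ as a Calder\'on--Zygmund operator with kernel valued in the Hilbert space $\mathcal{B}=\Lb^2(\R_+,dt/t)$, getting the $\Lb^2$ bound by Plancherel, weak $(1,1)$ by the Calder\'on--Zygmund decomposition plus the H\"ormander condition \eqref{eqn: bochner holder condition}, $\Lb^p$ by Marcinkiewicz interpolation and self-duality of $\mathcal{B}$, and $\Hb^1\to\Lb^1_{\mathcal{B}}$ by testing on atoms. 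The only imprecision is the claim that the zero average of $\psi$ gives $|\hpsi(\xi)|\lesssim|\xi|$ near the origin: under the stated decay \eqref{eqn: decay condition} the first moment of $\psi$ need not be finite when $\varepsilon\le 1$, and one only gets $|\hpsi(\xi)|\lesssim A|\xi|^{\min(\varepsilon,1)}$ (up to a logarithm at $\varepsilon=1$); this weaker bound still makes $s\mapsto|\hpsi(s\theta)|^2/s$ integrable near $0$ uniformly in $\theta$, so the conclusion is unaffected.
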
 

\begin{remark}
We can also formulate similar bounds for the Littlewood-Paley $\gk$ operator 
\begin{equation}
	\gk (f)(x) := \left[ \sum_{j\in\Z} |\psi_{j} \ast f(x)|^2 \right]^{1/2}
\end{equation} 
using similar arguments.
\end{remark}

\begin{remark}
Let $\psi$ be a wavelet that has properties \eqref{eqn: decay condition} and \eqref{eqn: holder condition}. Then with the $\Lb^2$ normalized dilations, the Littlewood-Paley $G$-function can be written as:
\begin{align}
    G(f)(x) &= \left[ \int_0^{\infty} |f \ast \psi_{\lambda} (x)|^2 \, \frac{d\lambda}{\lambda^{n+1}} \right]^{1/2} \, . \label{eqn: G function with continuous wavelet}
\end{align}
Note that the $\lambda$ measure for $G(f)$ matches the measure in defining the norm of $\mathcal{W} f$. 
\end{remark}

\subsection{The $\Lb^2 (\R^n)$ Wavelet Scattering Transform}
In this subsection we prove the $\Lb^2 (\R^n)$ scattering transforms are bounded operators. More specifically, we prove that $S_{\text{cont},2}^m : \Lb^2 (\R^n) \rightarrow \Lb^2 (\R_+^m)$, where $\Lb^2 (\R_+^m)$ has the weighted measure defined by
\begin{equation*}
    \| S_{\text{cont},2}^m f \|_{\Lb^2 (\R_+^m)}^2 := \int_0^{\infty}\dots\int_0^{\infty} |S_{\text{cont},2}^m f(\lambda_1, \ldots,\lambda_m)|^2 \,\frac{d\lambda_1}{\lambda_1^{n+1}} \dots \frac{d\lambda_m}{\lambda_m^{n+1}}
\end{equation*}
and we show that $\| S_{\text{cont},2}^m f \|_{\Lb^2 (\R_+^m)} \leq C \| f \|_{\Lb^2 (\R^n)}$. We also show that $S_{\text{dyad},2}^m : \Lb^2(\mathbb{R}^n) \rightarrow \ellb^2 (\Z^m)$, where
\begin{equation*}
    \| S_{\text{dyad},2}^m f \|_{\ellb^2 (\Z^m)}^2 := \sum_{j_m \in \Z} \ldots \sum_{j_1 \in \Z} | S_{\text{dyad},2}^m f (j_1, \ldots, j_m) |^2.
\end{equation*}

\begin{proposition} \label{prop:L1MapMultipleLayersCont}
For any wavelet satisfying \eqref{eqn: decay condition} and \eqref{eqn: holder condition}, we have $S_{\text{cont},2}^m:\Lb^2(\mathbb{R}^n) \rightarrow \Lb^2(\mathbb{R}^m_+)$ and $S_{\text{dyad},2}^m:\Lb^2(\mathbb{R}^n) \rightarrow \ell^2(\mathbb{Z}^m)$. 
\end{proposition}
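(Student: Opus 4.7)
My plan is to reduce the claim to $m$ successive applications of the $\Lb^2$-boundedness of the Littlewood--Paley $G$-function (Lemma~\ref{thm: LP g-function Lp bounded}), peeling off the innermost modulus and scale variable one at a time. Since $S_{\text{cont},2}^m f(\lambda_1,\ldots,\lambda_m)$ is a squared $\Lb^2(\R^n)$-norm and hence non-negative, its $\Lb^1(\R_+^m)$ norm coincides with the iterated integral, and Tonelli will let me integrate in any order.

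I would write $U[\lambda_1,\ldots,\lambda_k]f := ||f\ast\psi_{\lambda_1}|\ast\cdots|\ast\psi_{\lambda_k}|$, with $U[\emptyset]f = f$, and use the trivial identity $\|U[\lambda_1,\ldots,\lambda_k]f\|_2 = \|U[\lambda_1,\ldots,\lambda_{k-1}]f \ast \psi_{\lambda_k}\|_2$, which holds because the pointwise modulus preserves the $\Lb^2$-norm. Integrating over $\lambda_m$ first and recognizing the form \eqref{eqn: G function with continuous wavelet} of $G$ gives
\begin{equation*}
    \int_0^\infty \|U[\lambda_1,\ldots,\lambda_m]f\|_2^2 \, \frac{d\lambda_m}{\lambda_m^{n+1}} = \|G(U[\lambda_1,\ldots,\lambda_{m-1}]f)\|_2^2 \leq B^2 \, \|U[\lambda_1,\ldots,\lambda_{m-1}]f\|_2^2,
\end{equation*}
where $B := \|G\|_{\Lb^2 \to \Lb^2}$ is finite by Lemma~\ref{thm: LP g-function Lp bounded}. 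Iterating this peeling step $m$ times yields the bound $\|S_{\text{cont},2}^m f\|_{\Lb^1(\R_+^m)} \leq B^{2m}\|f\|_2^2$.

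The dyadic case I would treat identically: replace each integral $\int_0^\infty \cdot\, d\lambda_k/\lambda_k^{n+1}$ by the sum $\sum_{j_k \in \Z}$, and replace $G$ by the $\gk$-function from the remark following Lemma~\ref{thm: LP g-function Lp bounded}, which is $\Lb^2 \to \Lb^2$ bounded under the same decay and H\"older hypotheses on $\psi$. The peeling identity and the iteration go through verbatim, producing $\|S_{\text{dyad},2}^m f\|_{\ellb^1(\Z^m)} \leq (B')^{2m}\|f\|_2^2$ for the corresponding constant $B'$.

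The only real nuisance will be confirming joint measurability of $(x,\lambda_1,\ldots,\lambda_m) \mapsto U[\lambda_1,\ldots,\lambda_m]f(x)$ so that Tonelli applies legitimately at each peeling step; this should follow by induction on $m$ from continuity of $\lambda \mapsto \psi_\lambda$ in $\Lb^1(\R^n)$ together with continuity of the modulus. Apart from Lemma~\ref{thm: LP g-function Lp bounded}, no additional analytic input is required; the proof is essentially a clean bookkeeping of the iteration.
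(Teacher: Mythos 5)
Your proof is correct and follows essentially the same route as the paper's: the paper also proceeds by induction on $m$, integrating out the last scale variable first, identifying the inner integral with $\|G(U_{\lambda_{m-1}}\cdots U_{\lambda_1}f)\|_2^2$, and invoking the $\Lb^2\to\Lb^2$ boundedness of the $G$-function from Lemma~\ref{thm: LP g-function Lp bounded} before applying the inductive hypothesis. Your "peeling" formulation and the explicit remark that the modulus preserves the $\Lb^2$-norm are just a cleaner bookkeeping of the identical argument, with the dyadic case handled the same way via the $\gk$-function.
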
 
\begin{proof}
The proof of the dyadic case is essentially identical to the proof given below and is thus omitted. The case of $m = 1$ follows by an application of Fubini's Theorem:
\begin{align*}
    \| S_{\text{cont},2} f \|_{\Lb^2 (\R_+)}^2 &= \int_0^{\infty} \| f \ast \psi_{\lambda} \|_2^2 \, \frac{d\lambda}{\lambda^{n+1}} \\
    &= \int_0^{\infty} \int_{\mathbb{R}^n}|(f \ast \psi_{\lambda})(x)|^2 \, dx \, \frac{d\lambda}{\lambda^{n+1}} \\
    &=  \int_{\mathbb{R}^n} |G(f)(x)|^2 \, dx \\
    & \leq C \|f\|^2_2
\end{align*} by boundedness of the G-function. 
Now we proceed by using induction. Assume that we have $\| S_{\text{cont},2}^m f \|_{\Lb^2 (\R_+^m)}^2 \leq C_m \|f\|^2_2$. Let $\Wc_{t}f = f \ast \psi_{t}$, define $Mf = |f|$, and $U_\lambda = MW_\lambda$ for notational brevity. Then notice that 
$$\||||f*\psi_{\lambda_1}|*\psi_{\lambda_2}|*\dots*\psi_{\lambda_{m}}| \ast \psi_{\lambda_{m+1}}\|_2^2 = \|\Wc_{\lambda_{m+1}}U_{\lambda_m} \cdots U_{\lambda_1} f\|_2^2.$$
Substituting yields
\begin{align*}
\| S_{\text{cont},2}^{m+1} f \|_{\Lb^2 (\R_+^{m+1})} &= \int_0^{\infty}\dots\int_0^{\infty} \|\Wc_{\lambda_{m+1}}U_{\lambda_m} \cdots U_{\lambda_1} f\|_2^2 \,\frac{d\lambda_1}{\lambda_1^{n+1}} \dots \frac{d\lambda_{m+1}}{\lambda_{m+1}^{n+1}}\\
&=\int_0^{\infty} \dots\int_0^{\infty}\int_0^{\infty} \|(U_{\lambda_m} \cdots U_{\lambda_1} f) \ast \psi_{\lambda_{m+1}}\|_2^2 \frac{d\lambda_{m+1}}{\lambda_{m+1}^{n+1}} \, \frac{d\lambda_1}{\lambda_1^{n+1}} \dots\frac{d\lambda_{m}}{\lambda_{m}^{n+1}} \\
&= \int_0^{\infty}\dots\int_0^{\infty}\|U_{\lambda_m} \cdots U_{\lambda_1} f\|_{\Lb^2 (\R_+)}^2 \frac{d\lambda_1}{\lambda_1^{n+1}} \dots\frac{d\lambda_{m}}{\lambda_{m}^{n+1}}\\
& \leq C \int_0^{\infty}\dots\int_0^{\infty} \|U_{\lambda_m} \cdots U_{\lambda_1} f\|_2^2 \frac{d\lambda_1}{\lambda_1^{n+1}} \dots\frac{d\lambda_{m}}{\lambda_{m}^{n+1}} \\
& = C \int_0^{\infty}\dots\int_0^{\infty} |S_{\text{cont},2}^m(\lambda_1, \ldots, \lambda_m)|^2 \frac{d\lambda_1}{\lambda_1^{n+1}} \dots\frac{d\lambda_{m}}{\lambda_{m}^{n+1}} \\
& \leq C^{m+1} \|f\|_2^2,
\end{align*} where we used the induction hypothesis in the last line. This completes the proof.
\end{proof}
\begin{proposition} 
\label{prop:L1MapMultipleLayersInversion}
Suppose $\psi$ is a Littlewood-Paley wavelet satisfying \eqref{eqn: decay condition} and \eqref{eqn: holder condition}. Then  $S_{\text{cont},2}^m f:\Lb^2(\mathbb{R}^n) \rightarrow \Lb^2(\mathbb{R}^m_+)$ and specifically $\norm{S_{\text{cont},2}^m f}_1 = C_\psi^m\norm{f}_2^2$. Also, $S_{\text{dyad},2}^m:\Lb^2(\mathbb{R}^n) \rightarrow \ell^2(\mathbb{Z}^m)$ and $\norm{S_{\text{dyad},2}^mf}_1 = \hat{C}_\psi^m\norm{f}_2^2$.
 \end{proposition}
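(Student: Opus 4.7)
The plan is to prove the equality by induction on $m$, mirroring the structure of the proof of Proposition~\ref{prop:L1MapMultipleLayersCont} but upgrading the $G$-function bound of Lemma~\ref{thm: LP g-function Lp bounded} to the exact isometry delivered by the Littlewood-Paley admissibility conditions \eqref{def: continuous littlewood paley admissible} and \eqref{eqn: dyadic littlewood paley admissible}. The key additional ingredient is that the modulus nonlinearity preserves the $\Lb^2(\R^n)$ norm, so each newly added layer of the cascade contributes exactly one factor of $\Cc_\psi$ (respectively $\hat{C}_\psi$) after collapsing the outermost modulus.

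For the base case $m=1$ in the continuous setting, Fubini-Tonelli gives
\begin{equation*}
    \| S_{\text{cont},2} f \|_{\Lb^1(\R_+)} = \int_0^{\infty} \| f \ast \psi_{\lambda} \|_2^2 \, \frac{d\lambda}{\lambda^{n+1}},
\end{equation*}
and Plancherel's theorem combined with the substitution $\hpsi_\lambda(\omega) = \lambda^{n/2}\hpsi(\lambda\omega)$ reduces the $\lambda$-integral, for each fixed $\omega$, to $\int_0^\infty |\hpsi(\lambda\omega)|^2 d\lambda/\lambda = \Cc_\psi$ by \eqref{def: continuous littlewood paley admissible}. This yields $\|S_{\text{cont},2} f\|_1 = \Cc_\psi \|f\|_2^2$, which is precisely the isometry already recorded in Section~\ref{sec: notation and basic properties}. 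For the induction step, set
\begin{equation*}
    g_{\lambda_1,\ldots,\lambda_m} := |||f \ast \psi_{\lambda_1}| \ast \psi_{\lambda_2}| \cdots |\ast \psi_{\lambda_m}|,
\end{equation*}
which lies in $\Lb^2(\R^n)$ for each fixed tuple because convolution with $\psi_{\lambda_j}$ maps $\Lb^2$ to $\Lb^2$ (by Plancherel and boundedness of $\hpsi$) and the modulus preserves the $\Lb^2$ norm. Pull the $\lambda_{m+1}$ integral to the inside and apply the base case to the resulting innermost slice:
\begin{equation*}
    \int_0^\infty \|g_{\lambda_1,\ldots,\lambda_m} \ast \psi_{\lambda_{m+1}}\|_2^2 \, \frac{d\lambda_{m+1}}{\lambda_{m+1}^{n+1}} = \Cc_\psi \, \|g_{\lambda_1,\ldots,\lambda_m}\|_2^2.
\end{equation*}
Since the outermost modulus is norm-preserving, $\|g_{\lambda_1,\ldots,\lambda_m}\|_2^2 = S_{\text{cont},2}^m f(\lambda_1,\ldots,\lambda_m)$, so integrating the remaining $m$-fold weighted integral gives $\Cc_\psi \cdot \|S_{\text{cont},2}^m f\|_1 = \Cc_\psi^{m+1}\|f\|_2^2$ by the inductive hypothesis. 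The dyadic case proceeds identically after replacing \eqref{def: continuous littlewood paley admissible} by \eqref{eqn: dyadic littlewood paley admissible} and continuous integrals by sums over $\Z$.

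The main subtlety is justifying the exchange of integration order at each stage, but this is essentially automatic: Proposition~\ref{prop:L1MapMultipleLayersCont} already guarantees that the iterated integral defining $\|S_{\text{cont},2}^{m+1} f\|_1$ is finite, the integrands are nonnegative, and joint measurability of $(x,\lambda_1,\ldots,\lambda_m)\mapsto g_{\lambda_1,\ldots,\lambda_m}(x)$ follows from continuity of $\lambda \mapsto \psi_\lambda$ in $\Lb^2$; hence Fubini-Tonelli applies freely. A secondary point is that the admissibility identity holds only off $\omega = 0$, but $\{0\}$ has Lebesgue measure zero in $\R^n$, so it is inconsequential in the Plancherel computation. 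Finally, if one tracks the real vs.\ complex-analytic dichotomy from \eqref{eqn: beta definition}, the identity in the statement reads more precisely $\|S_{\text{cont},2}^m f\|_1 = (\beta\Cc_\psi)^m\|f\|_2^2$, and analogously in the dyadic case.
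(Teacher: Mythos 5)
Your proof is correct and follows essentially the same route as the paper's: induction on $m$, with the base case handled by Plancherel, Fubini, and the admissibility condition \eqref{def: continuous littlewood paley admissible}, and the inductive step by collapsing the innermost $\lambda_{m+1}$-integral via the one-layer isometry applied to $U_{\lambda_m}\cdots U_{\lambda_1}f \in \Lb^2(\R^n)$. Your closing remark about the factor $\beta$ from \eqref{eqn: beta definition} is a fair precision: the paper's computation implicitly assumes the admissibility identity holds for all $\omega \neq 0$, which corresponds to the real-valued ($\beta = 1$) case.
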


\begin{proof}
	We only provide the proof of the continuous case again. First consider the case $m=1$. We have:
	\begin{align*}
	\norm{S_{\text{cont},2} f}_{\Lb^2(\mathbb{R}_+)}^2 &=\int_0^{\infty} \| f \ast \psi_{\lambda} \|_2^2 \, \frac{d\lambda}{\lambda^{n+1}}  \\
	&= \frac{1}{(2\pi)^n}\int_0^{\infty} \| \hat{f} \cdot \hat{\psi}_{\lambda} \|_2^2 \, \frac{d\lambda}{\lambda^{n+1}}  \\
	&=  \frac{1}{(2\pi)^n}\int_0^{\infty} \left(\int_{\mathbb{R}^n} |\hat{f}(\omega)|^2|\hat{\psi}_\lambda(\omega)|^2\ d\omega\right) \frac{d\lambda}{\lambda^{n+1}} \\
	&= \frac{1}{(2\pi)^n}\int_{\mathbb{R}^n} \left(\int_0^{\infty}  |\hat{\psi}(\lambda\omega)|^2 \frac{d\lambda}{\lambda}\right)|\hat{f}(\omega)|^2 \ d\omega \\
	&= \frac{1}{(2\pi)^n}\int_{\mathbb{R}^n} \left( C_\psi|\hat{f}(\omega)|^2 \right) \ d\omega \\
	&= \frac{1}{(2\pi)^n}C_\psi\norm{\hat{f}}_2^2 \\
	&= C_\psi\norm{f}_2^2.
	\end{align*}
	Thus the claim holds for $m=1$. Now assume that it holds through $m$. Then by the inductive hypothesis,
	\begin{align*}
	\norm{S_{\text{cont},2}^{m}f}_{\Lb^2(\mathbb{R}_+)}^2 &= \int_0^{\infty}\dots\int_0^{\infty} \|||f*\psi_{\lambda_1}|*\psi_{\lambda_2}|*\dots*\psi_{\lambda_{m}}\|_2^2 \,\frac{d\lambda_1}{\lambda_1^{n+1}} \dots \frac{d\lambda_{m}}{\lambda_{m}^{n+1}} = C_\psi^{m}\norm{f}_2^2.
	\end{align*}
	Now consider the case of $m+1$. Similar to the previous proposition, we have  
	\begin{align*}
	\norm{S_{\text{cont},2}^{m+1}f}_{\Lb^2(\mathbb{R}_+)}^2&=\int_0^{\infty} \dots\int_0^{\infty}\left(\int_0^{\infty} \|(U_{\lambda_m} \cdots U_{\lambda_1} f) \ast \psi_{\lambda_{m+1}}\|_2^2 \frac{d\lambda_{m+1}}{\lambda_{m+1}^{n+1}}\right) \, \frac{d\lambda_1}{\lambda_1^{n+1}} \dots\frac{d\lambda_{m}}{\lambda_{m}^{n+1}} \\
	&=   C_\psi\int_0^{\infty} \dots\int_0^{\infty}|S_{\text{cont},2}^m f(\lambda_1, \ldots,\lambda_m)|^2 \frac{d\lambda_1}{\lambda_1^{n+1}} \dots\frac{d\lambda_{m}}{\lambda_{m}^{n+1}}\\
	&=  C_\psi \norm{S_{\text{cont},2}^{m}f}_{\Lb^2(\mathbb{R}_+)}^2\\
	&= C_\psi^{m+1}\norm{f}_2^2.
	\end{align*}
Thus, the claim is proven by induction.
\end{proof}

\subsection{The $\Lb^1 (\R^n)$ Wavelet Scattering Transform}
Define the notation $\Wc_{t}f = f \ast \psi_{t}$, $Mf = |f|$, and $U_t = M\Wc_t$. We now try to prove that for $m \in \mathbb{N}$, $S_{\text{cont},1}^m : \Hb^1 (\R^n) \rightarrow \Lb^2 (\R_+^m)$. The norm for $S_{\text{cont},1}^m f$ is:
\begin{align*}
    \| S_{\text{cont},1}^m f \|_{\Lb^2 (\R_+^m)} &:= \left(\int_0^{\infty}\int_0^{\infty}\cdots \int_0^{\infty} | S_{\text{cont},1}^m f(\lambda_1, \lambda_2, \ldots, \lambda_m)|^2 \, \frac{d\lambda_1}{\lambda_1^{n+1}} \, \frac{d\lambda_2}{\lambda_2^{n+1}} \, \cdots \, \frac{d\lambda_m}{\lambda_m^{n+1}}\right)^{1/2}\\
    &= \left(\int_0^{\infty}\int_0^{\infty}\cdots \int_0^{\infty} \left\| (U_{\lambda_{m-1}} \cdots U_{\lambda_1} f) \ast \psi_{\lambda_m} \right\|_1^2 \, \frac{d\lambda_1}{\lambda_1^{n+1}} \, \frac{d\lambda_2}{\lambda_2^{n+1}} \,  \cdots \, \frac{d\lambda_m}{\lambda_m^{n+1}}\right)^{1/2} \,.
\end{align*} An analogous result will also hold for the operator $\Hb^1 (\R^n) \rightarrow \ell^2 (\Z_+^m)$ with norm
\begin{equation*}
    \| S_{\text{dyad},1}^m f \|_{\ellb^2 (\Z^m)} := \left(\sum_{j_m \in \Z} \ldots \sum_{j_1 \in \Z} | S_{\text{dyad},1}^m f (j_1, \ldots, j_m) |^2\right)^{1/2}.
\end{equation*}

Before we begin, we will need an important multiplier property of the individual Riesz Transforms:
\begin{equation}
\widehat{R_jf}(\omega) = - i \frac{\omega_j}{|\omega|}\hat{f}(\omega) \, .
\end{equation} 

Let $\vec{\alpha} = (\alpha_1, \ldots, \alpha_n)$ be a multi-index with $n$-elements, and let $t = (t_1, \ldots, t_n) \in \mathbb{R}^n$. We say that $\psi$ has $k$ vanishing moments if for all $|\vec{\alpha}| < k$, we have
\begin{equation}
\int_{\mathbb{R}^n} \left(\Pi_{i=1}^n t_i^{\alpha_i}\right) \psi(t) dt = 0.
\end{equation}
The following lemmas will be necessary.

\begin{lemma} [\cite{ward2013decay}] \label{lem: riesz transform swap with derivative} Suppose that $\psi$ has $N$ vanishing moments, let $M > 1$ be an integer, let $\vec{\alpha}$ be defined as before, and let $\vec{\beta} = (\beta_1, \ldots, \beta_n)$ be a multi-index. Assume that $\psi$ satisfies the following properties:
\begin{itemize}
    \item $\psi\in \mathbf{H}^s(\mathbb{R}^d) \cap C(\mathbb{R}^d)$ for some  $s> M + \frac{n}{2}$.
    \item There exists $A > 0$ and $\epsilon \in [0,1)$ such that $\psi$ satisfies
    $$|D^{\vec{\alpha}}\psi| \leq A(1+|x|)^{-n-N-|\vec{\alpha}| + \varepsilon} \text { for } 0 \leq |\vec{\alpha}| \leq M.$$
    \item For $0 \leq |\vec{\alpha}| \leq M- 1$ and $|\vec{\beta}| < N + |\vec{\alpha}|$,
    $$\int_{\mathbb{R}^n} \Pi_{i=1}^n t_i^{\beta_i} {D}^{\vec{\alpha}} \psi(t) \, dt = 0.$$
\end{itemize} Then
$$|{D}^{\vec{\alpha}} R_i \psi(x)| = |R_i D^{\vec{\alpha}} \psi(x)| \leq A(1+|x|)^{-n-N-|\vec{\alpha}| + \varepsilon + \delta}$$
for some $0 < \delta < 1 - \varepsilon$ and $D^{\vec{\alpha}} R_i \psi$ has vanishing moments up to degree $N-1 + |\vec{\alpha}|.$
\end{lemma}

An immediate consequence is the following Lemma, which we will provide without proof. 

\begin{lemma} \label{lem: riesz transform decay}
Suppose that $\psi$ satisfies the following conditions:
\begin{itemize}
    \item $\psi\in \mathbf{H}^s(\mathbb{R}^d) \cap C(\mathbb{R}^d)$ for some  $s > 2 + \frac{n}{2}$.
    \item There exists $A > 0$ and $\epsilon \in [0,1)$ such that $\psi$ satisfies
    $$|D^{\vec{\alpha}}\psi| \leq A(1+|x|)^{-n-2-|\vec{\alpha}| + \varepsilon} \text { for } 0 \leq |\vec{\alpha}| \leq 3.$$
    \item For $0 \leq |\vec{\alpha}| \leq 2$ and $|\vec{\beta}| < 2 + |\vec{\alpha}|$,
    $$\int_{\mathbb{R}^n} \Pi_{i=1}^n t_i^{\beta_i} {D}^{\vec{\alpha}} \psi(t) \, dt = 0.$$
\end{itemize} Then $R_j \psi$ and all of its first and second partial derivatives have $O((1+ |x|)^{-n-1 + \eta})$ decay for some $\eta \in (0, 1)$.
\end{lemma} The first implication to take note of is that $R_j \psi$ is a wavelet with "good" decay of itself and all its first and second partial derivatives. Note that the strict decay on the partial derivatives is necessary for technical reasons in later proofs, but decay on all second partial derivatives can be relaxed for the following theorem. 

\begin{theorem} \label{thm: m-layer L1 bounded norm}
Let $\psi$ be a wavelet satisfying Lemma \ref{lem: riesz transform decay} and let $S_{\text{cont},1}^m$ be defined as above. Then for $f \in \Hb^1 (\R^n)$, there exists a constant $C_m$ such that
\begin{equation*}
    \| S_{\text{cont},1}^m f \|_{\Lb^2 (\R_+^m)} \leq C_m \|f\|_{\Hb^1 (\R^n)} \, .
\end{equation*}
Additionally, 
?$$\| S_{\text{dyad},1}^m f \|_{\ellb^2 (\Z^m)} \leq C_m \| f \|_{\Hb^1 (\R^n)}.$$
\end{theorem}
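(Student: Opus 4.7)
The plan is induction on $m$, with Propositions~\ref{prop: one-layer L1 wavelet scattering well defined} and~\ref{prop: two layer L1 bounded norm} providing the base cases $m=1$ and $m=2$. The dyadic bound follows from the identical argument after replacing each $\R_+$-integral against $d\lambda/\lambda^{n+1}$ with a $\Z$-sum and using the dyadic Littlewood--Paley $\gk$ operator, so I focus on the continuous case. To make the induction iterable, it is natural to strengthen the inductive hypothesis slightly to cover ``mixed'' $m$-layer scattering transforms that use $\psi$ for the inner $m-1$ layers but any admissible wavelet $\Psi$ satisfying \eqref{eqn: decay condition} and \eqref{eqn: holder condition} for the outermost layer; the same $\Hb^1$-to-$\Lb^2$ bound holds, with a constant depending on $\Psi$. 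Propositions~\ref{prop: one-layer L1 wavelet scattering well defined} and~\ref{prop: two layer L1 bounded norm} extend trivially to this stronger statement at $m=1,2$.

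For the inductive step $m \to m+1$, I would peel off the outermost layer (at scale $\lambda_{m+1}$) exactly as in the proof of Lemma~\ref{lem: two-layer L1 intermediate lemma 01}: Minkowski's integral inequality in $\lambda_{m+1}$ followed by Theorem~\ref{thm: LP g-function Lp bounded} yields
\begin{equation*}
\|S^{m+1}_{\text{cont},1}f\|_{\Lb^2(\R_+^{m+1})}^2 \leq C \int_0^\infty\!\!\cdots\!\!\int_0^\infty \|U_{\lambda_{m-1}}\cdots U_{\lambda_1}f \ast \psi_{\lambda_m}\|_{\Hb^1(\R^n)}^2 \prod_{i=1}^m \frac{d\lambda_i}{\lambda_i^{n+1}} \, ,
\end{equation*}
where the outermost modulus is absorbed into the $G$-function estimate exactly as in the two-layer case.

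Expanding the Hardy norm via \eqref{eqn: hardy space norm Riesz}, using $(a+b)^2 \leq 2a^2 + 2b^2$ together with Cauchy--Schwarz over $j$, and using $R_j(g\ast\psi_{\lambda_m}) = g\ast R_j\psi_{\lambda_m}$, the right-hand side splits into $n+1$ integrals of the form
\begin{equation*}
    \int_0^\infty\!\!\cdots\!\!\int_0^\infty \|U_{\lambda_{m-1}}\cdots U_{\lambda_1}f \ast \Psi_{\lambda_m}\|_1^2 \prod_{i=1}^m \frac{d\lambda_i}{\lambda_i^{n+1}} \, , \qquad \Psi \in \{\psi, R_1\psi, \ldots, R_n\psi\} \, .
\end{equation*}
Each such integral is the squared scattering norm of a mixed $m$-layer scattering transform of $f$ whose outermost wavelet is $\Psi$. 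The $n+3$-vanishing-moments hypothesis on $\psi$ ensures, as in the proof of Proposition~\ref{prop: two layer L1 bounded norm}, that each $R_j\psi$ satisfies \eqref{eqn: decay condition} and \eqref{eqn: holder condition}, so each $\Psi$ is admissible and the strengthened inductive hypothesis bounds each piece by $C_m\|f\|_{\Hb^1}^2$. Summing the $n+1$ contributions yields $\|S^{m+1}_{\text{cont},1}f\|^2 \leq C_{m+1}\|f\|_{\Hb^1}^2$, completing the induction.

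The main obstacle is the first reduction step: the $G$-function bound of Theorem~\ref{thm: LP g-function Lp bounded} is stated for functions in $\Hb^1(\R^n)$, yet in the peeling argument it must be applied to the non-negative (hence generically non-Hardy) function $|U_{\lambda_{m-1}}\cdots U_{\lambda_1}f \ast \psi_{\lambda_m}|$. This is the same technicality that appears in the two-layer case and is resolved there by absorbing the modulus into the $G$-function estimate to obtain an inequality of the form $\|G_\psi(|g|)\|_1 \leq C\|g\|_{\Hb^1}$ for $g = (U_{\lambda_{m-1}}\cdots U_{\lambda_1}f)\ast \psi_{\lambda_m}$. The vanishing-moments hypothesis is precisely what ensures this absorption survives at each induction step, since $R_j\psi$ remains an admissible Littlewood--Paley wavelet that can itself play the role of $\Psi$ in the strengthened inductive hypothesis; the trade-off is that the constant $C_m$ inflates by a factor proportional to $n+1$ per layer.
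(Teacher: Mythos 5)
Your proof follows essentially the same route as the paper's: induction on the number of layers with the one- and two-layer propositions as base cases, peeling the outermost scale via Minkowski's integral inequality and the Littlewood--Paley $G$-function bound, expanding the resulting Hardy norm into $n+1$ Riesz-transform pieces, and invoking the $n+3$ vanishing moments to keep each $R_j\psi$ admissible. The only difference is that you state explicitly the strengthened induction hypothesis (allowing a different admissible wavelet in the outermost layer) that the paper uses implicitly when it applies its inductive bound to the $R_j\psi$ terms --- a welcome clarification, but not a different argument.
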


\begin{proof} We proceed by induction and only provide a proof for the continuous case because the dyadic case follows by almost identical reasoning. Let $f \in \Hb^1 (\R^n)$ throughout the proof. By Minkowski's integral inequality (\cite{hardy1988inequalities}, Theorem 202), we have
\begin{align*}
	\| S_{\text{cont},1}f \|_{\Lb^2(\R_+)} &= \left(\int_0^\infty \| f \ast \psi_\lambda \|_1^2 \, \frac{d\lambda}{\lambda^{n+1}}\right)^{1/2} \\
	&= \left(\int_0^\infty \left(\int_{\R^n} |f \ast \psi_\lambda(x)| \ dx \right)^2 \, \frac{d\lambda}{\lambda^{n+1}}\right)^{1/2} \\
	&\leq\left( \int_{\R^n} \left( \int_0^\infty |f \ast \psi_\lambda(x)|^2 \, \frac{d\lambda}{\lambda^{n+1}} \right)^{1/2} \, dx \right) \\
	&= \int_{\R^n} G(f)(x) \, dx  \\
	&= \| G(f) \|_1 \\
	&\leq C \| f \|_{\Hb^1 (\R^n)} \, ,
\end{align*}
where in the last inequality we used Lemma \ref{thm: LP g-function Lp bounded}. 

Now we assume that there exists some $m \geq 1$ such that  
$$\| S_{\text{cont},1}^m f \|_{\Lb^2 (\R_+^m)} \leq C_m \|f\|_{\Hb^1 (\R^n)}.$$
We have
\begin{align*}
     \| S_{\text{cont},1}^{m+1} f \|_{\Lb^2 (\R_+^{m+1})}  &= \left(\int_0^{\infty}\cdots \int_0^{\infty} \left\| (U_{\lambda_{m}} \cdots U_{\lambda_1} f) \ast \psi_{\lambda_{m+1}} \right\|_1^2 \, \frac{d\lambda_1}{\lambda_1^{n+1}} \cdots \frac{d\lambda_{m+1}}{\lambda_{m+1}^{n+1}} \right)^{1/2}\\
     &= \left(\int_0^{\infty}\cdots \int_0^{\infty} \left(\int_{\mathbb{R}^n}\left|(U_{\lambda_{m}} \cdots U_{\lambda_1} f) \ast \psi_{\lambda_{m+1}}\right| \, dx \right)^2\, \frac{d\lambda_1}{\lambda_1^{n+1}} \cdots \frac{d\lambda_{m+1}}{\lambda_{m+1}^{n+1}}\right)^{1/2} \\
     &\leq \left(\int_0^{\infty}\cdots \int_0^{\infty} \left(\int_{\mathbb{R}^n}\left[\int_0^{\infty}\left|(U_{\lambda_{m}} \cdots U_{\lambda_1} f) \ast \psi_{\lambda_{m+1}}\right|^{2} \, \frac{d\lambda_{m+1}}{\lambda_{m+1}^{n+1}} \right]^{1/2}\,dx\right)^2\, \frac{d\lambda_1}{\lambda_1^{n+1}} \cdots \frac{d\lambda_{m}}{\lambda_{m}^{n+1}}\right)^{1/2} \\
     &= \left(\int_0^{\infty}\cdots \int_0^{\infty} \left[\int_{\mathbb{R}^n}G(U_{\lambda_{m}} \cdots U_{\lambda_1} f)(x) \, dx \right]^2\, \frac{d\lambda_1}{\lambda_1^{n+1}} \cdots \frac{d\lambda_{m}}{\lambda_{m}^{n+1}}\right)^{1/2}\\
     &= \left(\int_0^{\infty}\cdots \int_0^{\infty} \|G(U_{\lambda_{m}} \cdots U_{\lambda_1} f)\|_1^2 \frac{d\lambda_1}{\lambda_1^{n+1}} \cdots \frac{d\lambda_{m}}{\lambda_{m}^{n+1}}\right)^{1/2}\\
      &= \left(\int_0^{\infty}\cdots \int_0^{\infty} \|G(\Wc_{{\lambda_{m}}}U_{\lambda_{m-1}} \cdots U_{\lambda_1} f)\|_1^2 \frac{d\lambda_1}{\lambda_1^{n+1}} \cdots \frac{d\lambda_{m}}{\lambda_{m}^{n+1}}\right)^{1/2}
\end{align*} since the $G$ function has a modulus already. \newline 

It follows that
\begin{align*}
      \| S_{\text{cont},1}^{m} f \|_{\Lb^2 (\R_+^m)}&\leq C \left(\int_0^{\infty}\cdots \int_0^{\infty} \| \Wc_{{\lambda_{m}}}U_{\lambda_{m-1}} \cdots U_{\lambda_1} f\|_{\Hb^1(\mathbb{R}^n)}^2 \frac{d\lambda_1}{\lambda_1^{n+1}} \cdots \frac{d\lambda_{m}}{\lambda_{m}^{n+1}}\right)^{1/2}.
\end{align*}
Now use the definition of the $\Hb^1(\mathbb{R}^n)$ norm to write
$$\|\Wc_{{\lambda_{m}}}U_{\lambda_{m-1}} \cdots U_{\lambda_1} f\|_{\Hb^1(\mathbb{R}^n)} = \| \Wc_{{\lambda_{m}}}U_{\lambda_{m-1}} \cdots U_{\lambda_1} f\|_{\Lb^1(\mathbb{R}^n)} + \sum_{j=1}^n\left\| \left(R_j\Wc_{{\lambda_{m}}}\right)(U_{\lambda_{m-1}} \cdots U_{\lambda_1} f)  \right\|_{\Lb^1(\mathbb{R}^n)}.$$
Thus, since $R_j\Wc_{{\lambda_{m}}}h = h \ast \left(R_j\psi_{\lambda_{m}}\right)$ and $R_j\psi$ wavelet, we can use our induction hypothesis and the previous lemma to get
\begin{align*}
    &C\left(\int_0^{\infty}\cdots \int_0^{\infty} \| \Wc_{{\lambda_{m}}} (U_{\lambda_{m-1}} \cdots U_{\lambda_1} f)\|_{\Hb^1(\mathbb{R}^n)}^2 \frac{d\lambda_1}{\lambda_1^{n+1}} \cdots \frac{d\lambda_{m}}{\lambda_{m}^{n+1}}\right)^{1/2}\\
    & \leq C\left(\int_0^{\infty}\cdots \int_0^{\infty} \| \Wc_{{\lambda_{m}}} (U_{\lambda_{m-1}} \cdots U_{\lambda_1} f)\|_{\Lb^1(\mathbb{R}^n)}^2 \frac{d\lambda_1}{\lambda_1^{n+1}} \cdots \frac{d\lambda_{m}}{\lambda_{m}^{n+1}}\right)^{1/2} \\
    &+ C\sum_{j=1}^n\left(\int_0^{\infty}\cdots \int_0^{\infty} \left\|\left(R_j\Wc_{{\lambda_{m}}}\right) (U_{\lambda_{m-1}} \cdots U_{\lambda_1} f) \right\|_{\Lb^1(\mathbb{R}^n)}^2 \frac{d\lambda_1}{\lambda_1^{n+1}} \cdots \frac{d\lambda_{m}}{\lambda_{m}^{n+1}}\right)^{1/2} \\
    &\leq C_{m+1}\|f\|_{\mathbb{H}^1(\mathbb{R}^n)}.
\end{align*} Thus, the theorem is proved by induction.
\end{proof}

The case of $n=1$ is a little trickier. We have the following multiplier property for the Hilbert Transform:
\begin{equation} \label{eqn: fourier transform of hilbert transform}
    \widehat{Hf}(\omega) = \left\{
    \begin{array}{ll}
         +i \hf (\omega) & \omega < 0  \\
         -i \hf (\omega) & \omega > 0
    \end{array}
    \right.
\end{equation} Unfortunately, this yields less regularity for $\widehat{Hf}$ at the origin without additional assumptions. However, notice that the Hilbert transform commutes with dilations, so in particular:
\begin{equation*}
    H (\psi_{\lambda}) = H(\psi)_{\lambda} \quad \text{and} \quad H(\psi_j) = H(\psi)_j \, .    
\end{equation*}
Using the calculation of $\widehat{Hf}$ in \eqref{eqn: fourier transform of hilbert transform} we see that
\begin{equation*}
    H \psi = -i \psi \, , \quad \text{if } \psi \text{ is complex analytic.}
\end{equation*} Thus, we have the following corollary.

\begin{corollary} \label{thm: m-layer L1 bounded norm R^1}
Let $\psi$ be a complex analytic wavelet such that \eqref{eqn: decay condition} and \eqref{eqn: holder condition} hold. Then for $f \in \Hb^1 (\R)$, there exists a constant $C_m$ such that
\begin{equation*}
    \| S_{\text{cont},1}^m f \|_{\Lb^2 (\R_+^m)} \leq C_m \|f\|_{\Hb^1 (\R)} \, .
\end{equation*}
Additionally, 
$$\| S_{\text{dyad},1}^m f \|_{\ellb^2 (\Z^m)} \leq C_m \| f \|_{\Hb^1 (\R)}.$$
\end{corollary}

\subsection{$\Lb^q (\R^n)$ Wavelet Scattering Transform}
In this subsection, assume $1 < q < 2$. We prove that for $m \in \mathbb{N}$, $S_{\text{cont},q}^m : \Lb^q (\R^n) \rightarrow \Lb^2 (\R_+^m)$. The norm for $S_{\text{cont},q}^m f$ is:
\begin{align*}
    \| S_{\text{cont},q}^m f \|_{\Lb^2 (\R_+^m)}^q &:= \left(\int_0^{\infty}\int_0^{\infty}\cdots \int_0^{\infty} | S_{\text{cont},q}^m f(\lambda_1, \lambda_2, \ldots, \lambda_m)|^2 \, \frac{d\lambda_1}{\lambda_1^{n+1}} \, \frac{d\lambda_2}{\lambda_2^{n+1}} \, \cdots \, \frac{d\lambda_m}{\lambda_m^{n+1}}\right)^{q/2} \\
    &= \left(\int_0^{\infty}\int_0^{\infty}\cdots \int_0^{\infty} \left(\left\| (U_{\lambda_{m-1}} \cdots U_{\lambda_1} f)\ast  \psi_{\lambda_{m}} \right\|_q\right)^2 \, \frac{d\lambda_1}{\lambda_1^{n+1}} \, \frac{d\lambda_2}{\lambda_2^{n+1}} \,  \cdots \, \frac{d\lambda_m}{\lambda_m^{n+1}}\right)^{q/2} \, .
\end{align*}There is also an analagous result for
\begin{equation*}
    \| S_{\text{dyad},q}^m f \|_{\ellb^2 (\Z^m)}^q := \left(\sum_{j_m \in \mathbb{Z}} \cdots \sum_{j_m \in \mathbb{Z}} | S_{\text{dyad},q}^m f(\lambda_1, \lambda_2, \ldots, \lambda_m)|^2 \right)^{q/2}.
\end{equation*}
\begin{theorem} \label{thm: m-layer Lq bounded norm}
Let $1 < q < 2$. Also, let $\psi$ be a wavelet that satisfies properties \eqref{eqn: decay condition} and \eqref{eqn: holder condition} and let $S_{\text{cont},q}^m$ and $S_{\text{dyad},q}^m$ be defined as above. Then there exists a universal constant $C_m > 0$ such that $\| S_{\text{cont},q}^mf \|_{\Lb^2(\R_+)}^q \leq C_m \|f \|_{q}^q$ for all $f \in \Lb^q (\R^n)$, and furthermore $\| S_{\text{dyad},q}^m f \|_{\ellb^2 (\Z)}^q \leq C_m \| f \|_{q}^q$.
\end{theorem}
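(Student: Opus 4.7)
The plan is to proceed by induction on $m$, with Proposition \ref{prop: one-layer Lq wavelet scattering well defined} supplying the base case $m = 1$ for both the continuous and dyadic versions. The inductive step will mirror the two-step architecture used for the $\Lb^1$ theorem, but with a crucial simplification: since $1 < q < \infty$, the $G$-function (respectively the $\gk$ function) is bounded from $\Lb^q(\R^n)$ to $\Lb^q(\R^n)$ by Lemma \ref{thm: LP g-function Lp bounded}, so there is no need to pass through Hardy spaces or to invoke Riesz transforms. This means no extra vanishing moment hypothesis on $\psi$ is needed beyond \eqref{eqn: decay condition} and \eqref{eqn: holder condition}.

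For the inductive step, assume $\|S^{m-1}_{\text{cont},q} f\|_{\Lb^p(\R_+^{m-1})} \leq C_{m-1}\|f\|_q^q$. Fixing $\lambda_1,\ldots,\lambda_{m-1}$ and writing $h = U_{\lambda_{m-1}} \cdots U_{\lambda_1} f$, I first isolate the innermost integral over $\lambda_m$ and $x$ and apply Minkowski's integral inequality (valid since $p \geq 1$) to move $\int dx$ outside $\int d\lambda_m/\lambda_m^{n+1}$:
\begin{equation*}
\left(\int_0^\infty \bigl(\|h \ast \psi_{\lambda_m}\|_q^q\bigr)^p \, \frac{d\lambda_m}{\lambda_m^{n+1}}\right)^{1/p} \leq \int_{\R^n}\left(\int_0^\infty |h \ast \psi_{\lambda_m}(x)|^{qp}\, \frac{d\lambda_m}{\lambda_m^{n+1}}\right)^{1/p} dx \, .
\end{equation*}
Since $qp = 2$, the inner parenthesis is exactly $G(h)(x)^{2}$, so the right side equals $\int_{\R^n} G(h)(x)^{2/p} \, dx = \|G(h)\|_q^q$, where again I used $2/p = q$. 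By Lemma \ref{thm: LP g-function Lp bounded} this is bounded by $C^q \|h\|_q^q = C^q \|U_{\lambda_{m-1}} \cdots U_{\lambda_1} f\|_q^q$.

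Raising both sides to the $p$-th power and using $\|U_{\lambda_{m-1}} \cdots U_{\lambda_1} f\|_q^q = \|(U_{\lambda_{m-2}} \cdots U_{\lambda_1} f) \ast \psi_{\lambda_{m-1}}\|_q^q = S^{m-1}_{\text{cont},q} f(\lambda_1, \ldots, \lambda_{m-1})$, I then integrate against $\prod_{i=1}^{m-1} d\lambda_i/\lambda_i^{n+1}$ to obtain
\begin{equation*}
\|S^m_{\text{cont},q} f\|_{\Lb^p(\R_+^m)}^p \leq C^{p} \int_0^\infty \!\!\cdots\! \int_0^\infty \bigl(S^{m-1}_{\text{cont},q} f(\lambda_1,\ldots,\lambda_{m-1})\bigr)^p \prod_{i=1}^{m-1} \frac{d\lambda_i}{\lambda_i^{n+1}} = C^{p} \|S^{m-1}_{\text{cont},q} f\|_{\Lb^p(\R_+^{m-1})}^p \, .
\end{equation*}
Taking $p$-th roots and invoking the inductive hypothesis yields $\|S^m_{\text{cont},q} f\|_{\Lb^p(\R_+^m)} \leq C \cdot C_{m-1} \|f\|_q^q$, closing the induction with $C_m := C^m C_1$. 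The dyadic case proceeds identically: the role of Minkowski in $\lambda_m$ is played by Minkowski for the counting measure on $\Z$, and the $G$-function is replaced by the $\gk$ function, whose $\Lb^q \to \Lb^q$ boundedness follows by the analogous vector-valued Calderón--Zygmund argument noted in the remark after Lemma \ref{thm: LP g-function Lp bounded}. The only step requiring care is the bookkeeping of the exponents $q$ and $p$ through the Minkowski swap; once $qp = 2$ and $2/p = q$ are used consistently, no further obstacle arises, and in particular no vanishing moment or Hardy space machinery is required.
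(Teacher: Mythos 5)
Your proposal is correct and follows essentially the same route as the paper: induction on $m$, with the inductive step consisting of applying the one-layer Minkowski/$G$-function argument of Proposition \ref{prop: one-layer Lq wavelet scattering well defined} in the innermost variable $\lambda_m$ and then recognizing the remaining integral as $\|S_{\text{cont},q}^{m-1}f\|_{\Lb^p(\R_+^{m-1})}^p$. Your exponent bookkeeping (raising the one-layer bound to the $p$-th power before integrating over $\lambda_1,\ldots,\lambda_{m-1}$) is in fact slightly more careful than the paper's displayed computation, and your observation that no vanishing-moment or Hardy-space hypotheses are needed here is consistent with the theorem as stated.
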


\begin{proof} We proceed by induction and consider the case of $m = 1$ first. Let $f \in \Lb^q (\R^n)$. For the continuous wavelet transform, we apply Minkowski's integral inequality:
\begin{align*}
	\| S_{\text{cont},q}f \|_{\Lb^2(\R_+)}^q &= \left[\int_0^\infty \left(\| f \ast \psi_\lambda \|_q\right)^{q} \, \frac{d\lambda}{\lambda^{n+1}}\right]^{1/2} \\
	&= \left[\int_0^\infty \left(\int_{\R^n} |f \ast \psi_\lambda(x)|^q \ dx \right)^{2/q} \, \frac{d\lambda}{\lambda^{n+1}}\right]^{q/2} \\
    &\leq \int_{\R^n}\left(\int_{0}^\infty |f \ast \psi_\lambda(x)|^{2} \ \frac{d\lambda}{\lambda^{n+1}}\right)^{q/2} \ dx \\
    &= \|G(f)\|_q^q \\
    &\leq C\|f\|_q^q.
\end{align*}
where in the last inequality we used Theorem \ref{thm: LP g-function Lp bounded}.

Now, let us assume that 
\begin{equation*}
    \| S_{\text{cont},q}^{m} f \|_{\Lb^2 (\R_+^{m})}^q \leq C^{m \cdot q} \|f\|_{\Lb^q(\R^n)}^q \, .
\end{equation*}
We apply Minkowski's Integral inequality \cite{hardy1988inequalities} to swap and then bound:
\begin{align*}
\| S_{\text{cont},q}^{m+1} f \|_{\Lb^2 (\R_+^{m+1})}^q &=\left[\int_0^\infty\dots \int_0^\infty \left(\left\| (U_{\lambda_{1}} \cdots U_{\lambda_1} f)\ast  \psi_{\lambda_{m+1}} \right\|_q\right)^{2/q} \, \frac{d\lambda_{1}}{\lambda_1^{n+1}}\dots \frac{d\lambda_{m+1}}{\lambda_{m+1}^{n+1}}\right]^{q/2} \\
&=\left[\int_0^\infty\dots \int_0^\infty \left( \int_{\mathbb{R}^n}|(U_{\lambda_{1}} \cdots U_{\lambda_1} f)\ast  \psi_{\lambda_{m+1}}(x)|^q \, dx \right)^{2/q} \, \frac{d\lambda_{1}}{\lambda_1^{n+1}}\dots \frac{d\lambda_{m+1}}{\lambda_{m+1}^{n+1}}\right]^{q/2} \\
&=\left[\int_0^\infty\dots \int_0^\infty \left[\int_0^\infty \left( \int_{\mathbb{R}^n}|(U_{\lambda_{1}} \cdots U_{\lambda_1} f)\ast  \psi_{\lambda_{m+1}}(x)|^q \, dx \right)^{2/q} \, \frac{d\lambda_{m+1}}{\lambda_{m+1}^{n+1}}\right]^{\frac{q}{2} \cdot \frac{2}{q}} \frac{d\lambda_{1}}{\lambda_1^{n+1}}\dots \frac{d\lambda_{m}}{\lambda_{m}^{n+1}}\right]^{q/2} \\
&\leq 
\left[\int_0^\infty\dots \int_0^\infty \left[ \int_{\mathbb{R}^n}\left( \int_0^\infty |(U_{\lambda_{1}} \cdots U_{\lambda_1} f)\ast  \psi_{\lambda_{m+1}}(x)|^2 \, \frac{d\lambda_{m+1}}{\lambda_{m+1}^{n+1}} \right)^{q/2} \, dx \right]^{\frac{2}{q}} \frac{d\lambda_{1}}{\lambda_1^{n+1}}\dots \frac{d\lambda_{m}}{\lambda_{m}^{n+1}}\right]^{q/2} \\
&= \left[\int_0^\infty\dots \int_0^\infty \|G(U_{\lambda_{1}} \cdots U_{\lambda_1} f)\|_q^2 \frac{d\lambda_{1}}{\lambda_1^{n+1}}\dots \frac{d\lambda_{m}}{\lambda_{m}^{n+1}}\right]^{q/2} \\
& \leq C^{q} \left[\int_0^\infty\dots \int_0^\infty \|(U_{\lambda_{1}} \cdots U_{\lambda_1}) f\|_q^2 \frac{d\lambda_{1}}{\lambda_1^{n+1}}\dots \frac{d\lambda_{m}}{\lambda_{m}^{n+1}}\right]^{q/2} \\
&= C^{q} \| S_{\text{cont},q}^{m}f\|_{\Lb^2(\R_+^{m})}^q\\
& \leq C^{(m+1)q}  \|f\|_q^q.
\end{align*}
This proves the desired claim.
\end{proof}

\section{Stability to Dilations}
We now consider dilations defined by $\tau(x) = c x$ for some constant $c$, so that $L_\tau f(x) = f((1-c)x)$. We will start by proving a lemma that will be useful for our work.

\begin{lemma} \label{dilate wavelet} Assume $L_{\tau}$ is defined as above. Then
$$L_{\tau}f \ast \psi_{\lambda}(x) = (1-c)^{-n/2}\left(f \ast \psi_{(1-c)\lambda}\right)((1-c)x).$$
\end{lemma}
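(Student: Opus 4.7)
The plan is to unfold the convolution as an integral, apply the change of variables that absorbs the factor $(1-c)$ from $L_\tau f$, and then regroup the resulting expression to recognize it as a rescaled wavelet acting on a rescaled argument.

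More concretely, I would first write
\[
L_\tau f \ast \psi_\lambda(x) = \int_{\R^n} f((1-c)y)\,\psi_\lambda(x-y)\,dy,
\]
and substitute $u = (1-c)y$, so that $dy = (1-c)^{-n}\,du$ (assuming for concreteness that $1-c > 0$; otherwise the same computation goes through up to absolute values/orientation of coordinates). This brings in the prefactor $(1-c)^{-n}$ and turns the integrand into $f(u)\,\psi_\lambda\!\left(x - u/(1-c)\right)$.

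Next, I would use the explicit form $\psi_\lambda(z) = \lambda^{-n/2}\psi(\lambda^{-1} z)$ and factor $1/(1-c)$ out of the argument of $\psi$, rewriting
\[
\lambda^{-1}\bigl(x - u/(1-c)\bigr) = \bigl((1-c)\lambda\bigr)^{-1}\bigl((1-c)x - u\bigr).
\]
Setting $\mu := (1-c)\lambda$, the normalization becomes $\lambda^{-n/2} = (1-c)^{n/2}\mu^{-n/2}$, and combining this with the $(1-c)^{-n}$ from the Jacobian gives an overall factor of $(1-c)^{-n/2}$. What remains inside the integral is exactly $\psi_\mu((1-c)x - u)$, so recognizing the convolution yields
\[
L_\tau f \ast \psi_\lambda(x) = (1-c)^{-n/2}\,(f \ast \psi_{(1-c)\lambda})((1-c)x),
\]
as claimed.

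There is no genuine obstacle here; this is a routine change-of-variables computation exploiting the $\Lb^2$-normalization of the dilations $\psi_\lambda$. The only minor bookkeeping point is tracking how the $\lambda^{-n/2}$ normalization and the Jacobian $(1-c)^{-n}$ combine to give the stated $(1-c)^{-n/2}$ factor, and making sure the sign/orientation convention for $1-c$ is handled (with $|1-c|$ in place of $1-c$ if $c > 1$). This identity is then the basic commutation relation that will drive the dilation-stability estimates in the sequel.
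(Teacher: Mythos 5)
Your proposal is correct and follows essentially the same route as the paper's proof: unfold the convolution, substitute $u=(1-c)y$, and regroup $\lambda^{-n/2}$ with the Jacobian $(1-c)^{-n}$ to produce the $(1-c)^{-n/2}$ prefactor and the rescaled wavelet $\psi_{(1-c)\lambda}$ evaluated at $(1-c)x-u$. The only difference is cosmetic (your remark about $|1-c|$ when $c>1$; the paper implicitly works with $c<\tfrac{1}{2n}$ so this does not arise).
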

\begin{proof}
Notice that
$$L_{\tau}f \ast \psi_{\lambda}(x) = \int_{\R^n}f((1-c)y) \psi_{\lambda} (x-y)\, dy.$$
We make the substitution $z = (1-c)y$. Then it follows that
\begin{align*}
    L_{\tau}f \ast \psi_{\lambda}(x) &= (1-c)^{-n}\int_{\R^n} f(z) \psi_{\lambda}(x-(1-c)^{-1}z) \, dz\\
    &= (1-c)^{-n}\int_{\R^n} f(z) \lambda^{-n/2}\psi\left(\lambda^{-1}(x-(1-c)^{-1}z)\right)\, dz\\
    &= (1-c)^{-n/2}\int_{\R^n}f(z) [(1-c)\lambda]^{-n/2} \psi\left(\left[(1-c)\lambda\right]^{-1}\left((1-c)x - z\right)\right)\, dz \\
    &= (1-c)^{-n/2}\int_{\R^n}f(z) \psi_{(1-c)\lambda}\left((1-c)x - z\right)\, dz \\
    &= (1-c)^{-n/2}f\ast \psi_{(1-c)\lambda}\left((1-c)x\right) \\
    &= (1-c)^{-n/2}L_\tau\left(f \ast \psi_{(1-c)\lambda}\right)(x).
\end{align*}
\end{proof}
\begin{remark}
We also have
$$L_\tau \Wc_\lambda f (x) = (f\ast\psi_\lambda)(x(1-c)).$$
\end{remark}

Before we begin the next Lemma, we explain the general idea behind our approach to explain the necessity of Lemma \label{prop: newwavelet}. Define
\begin{equation}
\Psi(x) = (1-c)^{-n/2}\psi_{(1-c)}(x) - \psi(x).    
\end{equation} We want to prove that $\Psi$ satisfies (\ref{eqn: decay condition}) and (\ref{eqn: holder condition}) with a linear dependence on $c$ for future stability lemmas.

\begin{lemma}\label{prop: newwavelet}
Suppose that $\psi$ is a wavelet that satisfies the following three conditions:
\begin{align} \
	|\psi(x)| &\leq \frac{A}{(1+|x|)^{n+1+\alpha}} \,  \quad x\in\R^n ,\\
	|\nabla\psi(x)| &\leq \frac{A}{(1+|x|)^{n+1+\beta}} \,  \quad x\in\R^n ,\\
	\|D^2\psi(x)\|_\infty &\leq \frac{A}{(1+|x|)^{n+1+\kappa}} \,  \quad x\in\R^n,
\end{align} for $\alpha, \beta, \kappa > 0$. Consider
\begin{equation*}
\Psi(x) = (1-c)^{-n/2}\psi_{(1-c)}(x) - \psi(x).    
\end{equation*}
for $c < \frac{1}{2n}$. Then $\Psi$ is a wavelet satisfying (\ref{eqn: decay condition}) and (\ref{eqn: holder condition}).
\end{lemma}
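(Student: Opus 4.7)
The plan is to verify three properties of $\Psi$: zero mean, the pointwise decay \eqref{eqn: decay condition}, and the integrated Hölder bound \eqref{eqn: holder condition}. First I would rewrite $(1-c)^{-n/2}\psi_{(1-c)}(x) = (1-c)^{-n}\psi((1-c)^{-1}x)$, which is the $\Lb^1$-normalized dilation of $\psi$; a change of variables then gives $\int_{\R^n}(1-c)^{-n}\psi((1-c)^{-1}x)\,dx = \int_{\R^n}\psi(y)\,dy = 0$, so $\int\Psi = 0$ by linearity and the zero-mean hypothesis on $\psi$.

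For the decay, I would apply the triangle inequality and the hypothesis $|\psi(y)|\leq A(1+|y|)^{-n-1-\alpha}$ to each summand of $\Psi$, reducing the problem to bounding $(1-c)^{-n}(1+|(1-c)^{-1}x|)^{-n-1-\alpha}$ by a constant multiple of $(1+|x|)^{-n-1-\alpha}$. Splitting into the regimes $|1-c|\leq 1$ (where $|(1-c)^{-1}x|\geq |x|$ makes this immediate) and $|1-c|>1$ (where the elementary bound $1+|x|/|1-c|\geq (1+|x|)/|1-c|$ absorbs the prefactor $(1-c)^{-n}$) gives $|\Psi(x)|\leq A'(1+|x|)^{-n-1-\alpha}$, so \eqref{eqn: decay condition} holds with $\varepsilon = 1+\alpha$.

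For the Hölder bound, I would use that $|\nabla\psi(x)|\leq A(1+|x|)^{-n-1-\beta}$ forces $\nabla\psi\in\Lb^1(\R^n)$; differentiating gives $\nabla\Psi(x) = (1-c)^{-n-1}\nabla\psi((1-c)^{-1}x) - \nabla\psi(x)$, and a change of variables shows $\|\nabla\Psi\|_1 \leq (|1-c|^{-1}+1)\|\nabla\psi\|_1 < \infty$. Writing $\Psi(x+h)-\Psi(x) = \int_0^1 \nabla\Psi(x+th)\cdot h\,dt$ and applying Fubini produces $\int|\Psi(x+h)-\Psi(x)|\,dx \leq |h|\,\|\nabla\Psi\|_1$, and combining this with the trivial bound $\int|\Psi(x+h)-\Psi(x)|\,dx\leq 2\|\Psi\|_1$ (finite by the decay just established) yields $\int|\Psi(x+h)-\Psi(x)|\,dx \leq A''|h|^{\varepsilon'}$ for any $\varepsilon'\in(0,1]$, giving \eqref{eqn: holder condition}. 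The argument is essentially bookkeeping and the Hessian bound is not needed here; the only mild obstacle is tracking the dependence of the constants on $c$ through the two regimes in the decay step, and recognizing that combining the $O(|h|)$ and $O(1)$ bounds via $\min(|h|,1)\leq |h|^{\varepsilon'}$ is what covers both small and large $h$ simultaneously.
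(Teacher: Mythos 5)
Your argument does verify the literal statement: $\Psi$ has zero mean, and the triangle inequality plus the two-regime dilation estimate gives pointwise decay, while the $\min(|h|,1)\leq |h|^{\varepsilon'}$ trick combined with $\nabla\Psi\in\Lb^1$ gives the integrated H\"older bound. However, there is a gap relative to what this lemma is actually required to deliver. The paper's proof does not merely show that $\Psi$ satisfies \eqref{eqn: decay condition} and \eqref{eqn: holder condition}; it shows that the constants in those bounds are proportional to $c$ (for $c<\tfrac{1}{2n}$), i.e.\ $|\Psi(x)|\leq \hat A_n\, c\,(1+|x|)^{-n-\alpha}$ and $\int_{|x|\geq 2|y|}|\Psi(x-y)-\Psi(x)|\,dx\leq \hat A_n\, c\,|y|^{1-\kappa}$. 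That linear dependence on $c$ is the entire point: the corollaries immediately following the lemma assert $G$-function bounds of the form $c\cdot C_{n,p}\|f\|_p$, and every dilation-stability result in Section 4 (e.g.\ Proposition \ref{prop: one layer stability to dilations in 2 norm} and Theorem \ref{dilate m layers wavelet}) rests on the commutator term being $O(c)$. Your triangle-inequality bound for the decay and your $\|\nabla\Psi\|_1\leq(|1-c|^{-1}+1)\|\nabla\psi\|_1$ bound for the H\"older condition both produce constants of order $2A$ that do not vanish as $c\to 0$, so none of the downstream stability estimates would follow from your version.

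This is also why your remark that ``the Hessian bound is not needed'' should have been a warning sign. The paper extracts the factor $c$ by writing $\Psi$ as a difference of nearby dilates and applying the mean value theorem: for the decay it uses $|\psi(\tfrac{x}{1-c})-\psi(x)|\leq \tfrac{c}{1-c}|\nabla\psi(z)||x|$ with $|z|\geq|x|$, which consumes the gradient decay hypothesis; for the H\"older condition it applies the mean value theorem once more to $\nabla\Psi$, which is where the decay of $D^2\psi$ enters. The binomial expansion of $(1-c)^n$ handles the mismatch in normalizations, again producing a factor of order $c$. To repair your proof you would need to replace both triangle-inequality steps with these difference-quotient estimates so that every term carries a factor of $c$ (or $\sum_j\binom{n}{j}c^j$, which is $O(c)$ for $c<\tfrac{1}{2n}$).
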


\begin{proof}
Without loss of generality, assume $\alpha < \beta < \kappa <1$. First, it's clear that $\int_{\R^n}\Psi = 0$. We now just need to verify properties (\ref{eqn: decay condition}) and (\ref{eqn: holder condition}). Assume $c > 0$. We can modify the proof accordingly if $c < 0$. Then
\begin{align*}
|\Psi(x)| &= \left|(1-c)^{-n/2} \psi_{(1-c)}(x) - \psi(x) \right| \\
&= (1-c)^{-n}\left| \psi\left(\frac{x}{(1-c)}\right) - (1-c)^n\psi\left(x\right) \right| \\
&\leq (1-c)^{-n}\left|\psi\left(\frac{x}{1-c}\right) - \psi\left(\frac{1-c}{1-c}x\right)\right| +  (1-c)^{-n}\sum_{j=1}^n \binom{n}{j}c^j\left|\psi\left(x\right)\right|.
\end{align*} Now use mean value theorem on the first term to choose a point $z$ on the segment connecting $\frac{x}{1-c}$ and $x$ such that
$$\frac{c}{1-c}\left|[\nabla \psi(z)]^Tx\right| = \left|\psi\left(\frac{x}{1-c}\right) - \psi\left(\frac{1-c}{1-c}x\right)\right|.$$
We now use Cauchy-Schwarz to bound the left side:
$$\frac{c}{1-c}\left|[\nabla \psi(z)]^Tx\right| \leq \frac{c}{1-c} \frac{A|x|}{\left(1 + |z|\right)^{n +1+ \beta}}.$$
Since $z$ lies on the segment connecting $\frac{x}{1-c}$ and $x$, we see that for some $t \in [0,1]$, we have
\begin{align*}
z &= (1-t)\frac{x}{1-c} + tx \\
&= \frac{1-t}{1-c} x + \frac{t-tc}{1-c}x\\
&= \frac{1-t + t - tc}{1-c} x\\
&= \frac{1-tc}{1-c}x.
\end{align*} Thus, $|z| \geq |x|$.
It now follows that 
$$\frac{c}{1-c} \frac{A|x|}{\left(1 + |z|\right)^{n+ 1 + \beta}} \leq \frac{c}{1-c}\frac{A}{\left(1+|x|\right)^{n + \beta}}.$$
Finally, we get
\begin{align*}
|\Psi_\lambda(x)| &\leq \frac{c}{(1-c)^{n+1}} \frac{A}{\left(1 + |x|\right)^{n+\beta}} + \frac{\sum_{j=1}^n \binom{n}{j}c^j}{(1-c)^{n+1}} \frac{A}{\left(1 + |x|\right)^{n+\alpha}} \\
&\leq 2A\left(\frac{2n}{2n-1}\right)^{-n-1} \frac{\sum_{j=1}^n \binom{n}{j}c^j}{\left(1 + |x|\right)^{n+\alpha}} \\
& \leq \frac{A_n  c}{\left(1 + |x|\right)^{n+\alpha}}
\end{align*} for some constant $A_n$ since we assume $\alpha < \beta$ and $c < \frac{1}{2n}$. Thus, (\ref{eqn: decay condition}) is satisfied. \newline

We use a similar idea for proving (\ref{eqn: holder condition}) holds.  Assume $c > 0$ without loss of generality and further assume that $|x| \geq 2|y|$. By Mean Value Theorem, there exists $z$ on the line segment connecting $x$ and $x-y$ such that
 $$|\Psi(x-y) - \Psi(x)| = |\nabla\Psi(z)||y|.$$
Like before, we notice that 
\begin{align*}
|\nabla\Psi(z)| &= \left|(1-c)^{-n/2} \nabla\psi_{(1-c)}(z) - \nabla \psi(z) \right| \\
&= \left|(1-c)^{-n-1} \nabla\psi\left(\frac{z}{1-c}\right) - \nabla \psi(z) \right| \\
&= (1-c)^{-n-1}\left|\nabla\psi\left(\frac{z}{1-c}\right) - (1-c)^{n+1}\nabla \psi(z) \right| \\
&\leq (1-c)^{-n-1}\left|\nabla\psi\left(\frac{z}{1-c}\right) - \nabla\psi\left(\frac{1-c}{1-c}z\right)\right| +  (1-c)^{-n-1}\sum_{j=1}^{n+1} \binom{n+1}{j}c^j\left|\nabla\psi\left(z\right)\right|.
\end{align*} Let $S$ be the set of points on the segment connecting $\frac{z}{1-c}$ and $z$. By Mean Value Inequality, since $S$ is closed and bounded, we have
$$\left|\nabla\psi\left(\frac{z}{1-c}\right) - \nabla\psi\left(\frac{1-c}{1-c}z\right)\right| \leq \frac{c}{1-c}\max_{w \in S} \left\|D^2\psi(w)\right\|_\infty|z|.$$
The maximum for the quantity above is attained in $S$, so let us say the maximizer is $w_1 = (1-t)\frac{z}{1-c} + tz$ for some $t \in [0,1]$. Now use decay of the Hessian to bound the right side: 
$$\frac{c}{1-c}\max_{w \in S} \left\|D^2 \psi(w)\right\|_\infty|z| \leq \frac{c}{1-c} \frac{A |z|}{\left(1 + |w_1|\right)^{n +1+\kappa}}.$$
It follows that
\begin{align*}
w_1 &= (1-t)\frac{z}{1-c} + tz \\
&= \frac{1-t}{1-c} z + \frac{t-tc}{1-c}z\\
&= \frac{1-t + t - tc}{1-c} z\\
&= \frac{1-tc}{1-c}z.
\end{align*} Thus, $|w_1| \geq |z|.$
We conclude  
$$\frac{c}{1-c} \frac{A|z|}{\left(1 + |w_1|\right)^{n+ 1 + \kappa}} \leq \frac{c}{1-c}\frac{A}{\left(1+|z|\right)^{n  + \kappa}}.$$
For bounding $|\nabla \Psi(z)|$, we see 
\begin{align*}
|\nabla\Psi(z)| &\leq \frac{c}{(1-c)^{n+2}} \frac{A}{\left(1 + |z|\right)^{n+\kappa}} +  \frac{\sum_{j=1}^{n+1}\binom{n+1}{j} c^j}{(1-c)^{n+1}} \frac{A}{\left(1 + |z|\right)^{n+1+\beta}} \\
&\leq A(1-c)^{-n-2} \frac{2\sum_{j=1}^{n+1}\binom{n+1}{j} c^j}{\left(1 + |z|\right)^{n+\kappa}} \\
& \leq \left(\frac{2n}{2n-1}\right)^{n+2} \frac{2A \sum_{j=1}^{n+1}\binom{n+1}{j} c^j}{\left(1 + |z|\right)^{n+\kappa}}.
\end{align*} 

Going back to proving (\ref{eqn: holder condition}) holds for $\Psi$,  
$$|\Psi(x-y) - \Psi(x)| = |\nabla\Psi(z)||y| \leq \left(\frac{2n}{2n-1}\right)^{n+2} \frac{2A \sum_{j=1}^{n+1}\binom{n+1}{j} c^j |y|}{\left(1 + |z|\right)^{n+\kappa}}.$$
since the point $z$ lies on the lines on a line segment connecting $x-y$ and $x$ with $|x| \geq 2|y|$, we can use an argument similar to above to conclude 
\begin{align*}
|\Psi(x-y) - \Psi(x)| &\leq 2^{n+1+\kappa}\left(\frac{2n}{2n-1}\right)^{n+2} \frac{A \sum_{j=1}^{n+1}\binom{n+1}{j}c^j}{\left(1 + |x|\right)^{n+\kappa}} |y|.
\end{align*}
Now integrate to get 
\begin{align*}
\int_{|x| \geq 2|y|} |\Psi(x-y) - \Psi(x)| \, dx &\leq 2^{n+1+\kappa}\left(\frac{2n}{2n-1}\right)^{n+2}A\sum_{j=1}^{n+1} \binom{n+1}{j}c^j |y|\int_{|x| \geq 2|y|} \frac{dx}{|x|^{n+\kappa}}\\
&= 2^{n+1+\kappa}\left(\frac{2n}{2n-1}\right)^{n+2}A I_n \sum_{j=1}^{n+1} \binom{n+1}{j} c^j |y|^{1-\kappa},
\end{align*} where $I_n$ is some constant associated with the integration. Finally, we have a bound of
$$\int_{|x| \geq 2|y|} |\Psi(x-y) - \Psi(x)| \, dx \leq \tilde{A}_n c |y|^{1-\kappa}.$$
for some constant $\tilde{A}_n$ only dependent on the dimension $n$. Thus, (\ref{eqn: holder condition}) holds with exponent $1 - \kappa \in (0,1)$. Let $\hat{A}_n  = \max\{A_n, \tilde{A}_n\}.$ It follows that 
\begin{align*}
&|\Psi_\lambda(x)| \leq \frac{\hat{A}_n  c}{\left(1 + |x|\right)^{n+\alpha}} \\
&\int_{|x| \geq 2|y|} |\Psi(x-y) - \Psi(x)| \, dx \leq \hat{A}_n c |y|^{1-\kappa}.
\end{align*}
\end{proof}

It follows from Problem 6.1.2 in \cite{grafakos} that the bound in the $G$-function depends linearly on the constant $A$ from Theorem \ref{thm: LP g-function Lp bounded} when proving $\Lb^2(\mathbb{R}^n)$ boundedness. Thus, the following corollaries hold.

 \begin{corollary}
 Assume $|c| < \tfrac{1}{2n}$. For $\psi$ satisfying the conditions of Lemma \ref{prop: newwavelet}, when $1 < p < \infty$, there exist constants $C_{n,p}$ and $\hat{C}_{n,p}$ such that 
 $$\left\|\left(\int_{0}^\infty |f \ast \Psi_\lambda(x)|^2 \, \frac{d \lambda}{\lambda^{n+1}}\right)^{1/2}\right\|_{\Lb^p(\mathbb{R}^n)} \leq c \cdot C_{n,p} \max\{p, (p-1)^{-1}\} \|f\|_{\Lb^p(\mathbb{R}^n)}$$
 and 
 $$\left\|\left(\sum_{j \in \mathbb{Z}} |f \ast \Psi_j(x)|^2\right)^{1/2}\right\|_{\Lb^p(\mathbb{R}^n)} \leq  c \cdot \hat{C}_{n} \max\{p, (p-1)^{-1}\} \|f\|_{\Lb^p(\mathbb{R}^n)}.$$
Alternatively, if one of the following holds:
\begin{itemize}
    \item $n = 1$, $\psi$ is complex analytic and satisfies the conditions of Lemma \ref{prop: newwavelet},
    \item $n \geq 2$ and $\psi$ satisfies the conditions of Lemma \ref{lem: riesz transform decay},
\end{itemize} there exist constants $H_n$ and $\hat{H}_n$ such that
 $$\left\|\left(\int_{0}^\infty |f \ast \Psi_\lambda(x)|^2 \, \frac{d \lambda}{\lambda^{n+1}}\right)^{1/2}\right\|_{\Lb^1(\mathbb{R}^n)} \leq c \cdot H_n \|f\|_{\Hb^1(\mathbb{R}^n)}$$
and 
$$\left\|\left(\sum_{j \in \mathbb{Z}} |f \ast \Psi_j(x)|^2 \right)^{1/2}\right\|_{\Lb^1(\mathbb{R}^n)} \leq c \cdot \hat{H}_n \|f\|_{\Hb^1(\mathbb{R}^n)}.$$
 \end{corollary}

 Now we can use the results above for our main dilation stability results.
\begin{theorem} \label{dilate m layers wavelet}
Suppose that $\psi$ is a wavelet that satisfies the conditions of Lemma \ref{prop: newwavelet}. Then there exists a constants $K_{n,m}$ and $\hat{K}_{n,m}$ only dependent on $n$ and $m$ such that
\begin{equation*}
    \|S_{\text{cont},2}^mf - S_{\text{cont},2}^mL_\tau f \|_{\Lb^2(\R_+^m)} \leq |c| \cdot K_{n,m} \|f\|_2
\end{equation*} and
\begin{equation*}
    \|S_{\text{dyad},2}^mf - S_{\text{dyad},2}^mL_\tau f \|_{\Lb^2(\R_+^m)} \leq |c| \cdot \hat{K}_{n,m} \|f\|_2
\end{equation*}
 for any $|c| < \tfrac{1}{2n}$.
\end{theorem}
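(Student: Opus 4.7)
The plan is to adapt the approach of Proposition \ref{prop: one layer stability to dilations in 2 norm} to $m$ layers. Setting $V := \Wc_{\lambda_m} U_{\lambda_{m-1}} \cdots U_{\lambda_1}$ so that $S_{\text{cont},2}^m f(\bar{\lambda}) = \|Vf\|_2^2$, I would split the integrand as
$$\|Vf\|_2^2 - \|VL_\tau f\|_2^2 = \bigl(\|Vf\|_2^2 - \|L_\tau Vf\|_2^2\bigr) + \bigl(\|L_\tau Vf\|_2^2 - \|VL_\tau f\|_2^2\bigr).$$
The change of variables $z=(1-c)x$ gives $\|L_\tau Vf\|_2^2 = (1-c)^{-n}\|Vf\|_2^2$ pointwise in $\bar{\lambda}$, so the first bracket integrates to $(1-(1-c)^{-n})\|S_{\text{cont},2}^m f\|_{\Lb^1(\R_+^m)}$, which Proposition \ref{prop:L1MapMultipleLayersCont} bounds by $c \cdot B_n C_m\|f\|_2^2$. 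For the second bracket I use $|a^2-b^2|\leq |a-b|(a+b)$ with $a=\|L_\tau Vf\|_2$, $b=\|VL_\tau f\|_2$, the reverse triangle inequality $|a-b|\leq \|[L_\tau,V]f\|_2$, and Cauchy-Schwarz in $d\bar{\lambda}/\bar{\lambda}^{n+1}$. Proposition \ref{prop:L1MapMultipleLayersCont} again bounds the ``sum'' factor $\int(\|L_\tau Vf\|_2+\|VL_\tau f\|_2)^2\,d\bar{\lambda}/\bar{\lambda}^{n+1}$ by $4(1-c)^{-n}C_m\|f\|_2^2$, so the whole argument reduces to proving
$$\int \|[L_\tau,V]f\|_2^2\,\frac{d\bar{\lambda}}{\bar{\lambda}^{n+1}} \leq c^2\,E_{n,m}\,\|f\|_2^2.$$

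To establish this commutator bound, I would telescope
$$[L_\tau,V]f = \sum_{k=0}^{m-1} P_k\,[W_{k+1},L_\tau]\,h_k,$$
where $W_i = U_{\lambda_i}$ for $i<m$, $W_m = \Wc_{\lambda_m}$, $P_k = W_m W_{m-1}\cdots W_{k+2}$ (with $P_{m-1}=\mathrm{id}$), and $h_k = W_k\cdots W_1 f$ (with $h_0 = f$). Because $L_\tau$ commutes with the modulus and the modulus is pointwise $1$-Lipschitz, $|[U_\lambda,L_\tau]g| \leq |[\Wc_\lambda,L_\tau]g|$ pointwise. Combined with the identity $\|[\Wc_\lambda,L_\tau]g\|_2^2 = (1-c)^{-n}\|g\ast\Psi_\lambda\|_2^2$ extracted from the proof of Proposition \ref{prop: one layer stability to dilations in 2 norm}, this gives $\|[W_{k+1},L_\tau]h_k\|_2^2 \leq (1-c)^{-n}\|h_k \ast \Psi_{\lambda_{k+1}}\|_2^2$ for every $k$. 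The integral $\int \|P_k [W_{k+1},L_\tau]h_k\|_2^2\, d\bar{\lambda}/\bar{\lambda}^{n+1}$ then factors into three stages: integrating over $\lambda_{k+2},\ldots,\lambda_m$ recognizes $\|P_k(\cdot)\|_2^2$ as the integrand of an $(m-k-1)$-layer scattering transform, which Proposition \ref{prop:L1MapMultipleLayersCont} controls by $C_{m-k-1}\|\cdot\|_2^2$; integrating over $\lambda_{k+1}$ against $\Psi$ yields the crucial factor $4c^2 C_{n,2}^2\|h_k\|_2^2$ via the $G$-function corollary following Lemma \ref{prop: newwavelet}; and integrating over $\lambda_1,\ldots,\lambda_k$ produces $\|S_{\text{cont},2}^k f\|_{\Lb^1(\R_+^k)} \leq C_k\|f\|_2^2$ by Proposition \ref{prop:L1MapMultipleLayersCont} once more. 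Summing the $m$ telescoped terms via $\bigl|\sum_{k=0}^{m-1} a_k\bigr|^2 \leq m\sum_k |a_k|^2$ yields the desired $O(c^2)\|f\|_2^2$ bound with $E_{n,m}=O(m^2)$.

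The main technical obstacle is that $P_k$ is nonlinear and each $\Wc_\lambda$ has an $\Lb^2$-operator norm that grows with $\lambda$, so one cannot pull $P_k$ past the commutator by a uniform bound. The resolution is the observation that integrating $\|P_k(\cdot)\|_2^2$ against the natural scattering measure is already an $\Lb^1$-scattering norm to which Proposition \ref{prop:L1MapMultipleLayersCont} applies at no cost. The full smallness in $c$ is then manufactured entirely by the single linear commutator $[\Wc_{\lambda_{k+1}},L_\tau]$ through the perturbed wavelet $\Psi$ from Lemma \ref{prop: newwavelet}, and the modulus-absorption inequality $|[U_\lambda,L_\tau]g|\leq|[\Wc_\lambda,L_\tau]g|$ is the mechanism that transfers this linear smallness to every $U$-commutator in the telescoped sum.
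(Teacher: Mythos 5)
Your proposal is correct and follows essentially the same route as the paper's proof: both split the difference into a term coming from the change of normalization under $L_\tau$ (bounded via the change of variables $z=(1-c)x$ together with Proposition \ref{prop:L1MapMultipleLayersCont}) and a commutator term, which is then telescoped through the layers so that all of the smallness in $c$ is produced by the single-layer commutator $[\Wc_\lambda,L_\tau]$ acting through the perturbed wavelet $\Psi$ of Lemma \ref{prop: newwavelet}. The only difference is one of rigor rather than strategy: the paper compresses the telescoping into the one-line operator-norm estimate $\norm{[\Wc V_{m-1},L_\tau]}\leq C_m\norm{[\Wc,L_\tau]}$, whereas you carry out the layer-by-layer bookkeeping explicitly, using nonexpansiveness of the modulus and the integrated scattering bounds to handle the nonlinearity of the outer layers.
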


\begin{proof} Without loss of generality, assume $c > 0$. Let
\begin{align*}
\Wc_tf &= f \ast \psi_t \\
Mf &= |f|\\
U_t &= M\Wc_t \\
A_qf &= \left(\int_{\mathbb{R}^n} f^q(x) \, dx\right)^{1/q}.
\end{align*} It follows that $S_{\text{cont},2}^m = A_2MW_{\lambda_m}U_{\lambda_{m-1}} \cdots U_{\lambda_1}$. We will also let $V_{m-1} = U_{\lambda_{m-1}} \cdots U_{\lambda_1}$, with $V_0$ being the identity operator, and make a slight abuse of notation by denoting $\Wc_{\lambda_m}$ as $\Wc$. First, we will add and subtract $A_2ML_\tau \Wc V_{m-1}f$ and apply triangle inequality:
\begin{align*}
\|S_{\text{cont},2}^{m}f - S_{\text{cont},2}^{m}L_\tau f\|_{\Lb^2 (\R_+^m)} &= \|A_2M\Wc V_{m-1} f -  A_2M\Wc V_{m-1}L_\tau f\|_{\Lb^2 (\R_+^m)} \\
&\leq\|A_2M\Wc V_{m-1}f - A_2ML_\tau \Wc V_{m-1}f \|_{\Lb^2 (\R_+^m)} \\
&+ \|A_2ML_\tau \Wc V_{m-1}f-A_2M\Wc V_{m-1}, L_\tau f\|_{\Lb^2 (\R_+^m)}.
\end{align*}
We'll start by bounding the first term. We see that $g = \Wc V_{m-1}f \in \mathbb{\Lb}^2(\mathbb{R}^n)$. Thus
$$|A_2M\Wc V_{m-1}f - A_2ML_\tau \Wc V_{m-1}f| = \left| \|g\|_2 - \|L_\tau g\|_2\right|.$$
Now use a change of variables:
$$\|L_\tau g\|_2^2 = \int_{\mathbb{R}^n} |g((1-c)x)|^2 \, dx = (1-c)^{-n} \|g\|_2^2.$$
It then follows that 
    $$\left|\|L_\tau g\|_2 - \|g\|_2\right| = \|g\|_2\left(\frac{1}{(1-c)^{n/2}}-1\right) \leq   \|g\|_2\left(\frac{1}{(1-c)^{n}}-1\right).$$
Taking the scattering norm yields
\begin{align*}
\|A_2M\Wc V_{m-1}f - A_2ML_\tau \Wc V_{m-1}f\|_{\Lb^2 (\R_+^m)}^2
& \leq \left(\frac{1}{(1-c)^n}-1\right)^2\|S_{\text{cont},2}^mf\|_{\Lb^2(\R_+^m)}^2 \\
& = \left(\frac{1-(1-c)^n}{(1-c)^n}\right)^2\|S_{\text{cont},2}^mf\|_{\Lb^2(\R_+^m)}^2 \\
& = \left(\frac{1}{(1-c)^n} \sum_{j=1}^n \binom{n}{j} c^j\right)^2 \|S_{\text{cont},2}^mf\|_{\Lb^2(\R_+^m)}^2 \\
& \leq \left[\left(\frac{2n}{2n-1}\right)^{n}\sum_{j=1}^n \binom{n}{j} c^j\right]^2 \|S_{\text{cont},2}^mf\|_{\Lb^2(\R_+^m)}^2\\
& \leq c^2 \cdot C_{m,n}\|f\|_2^2.
\end{align*}
For the second term, apply Minkwoski's inequality for $2$ norms:
\begin{align*}
&\|A_2ML_\tau \Wc V_{m-1}f-A_2M\Wc V_{m-1} L_\tau f\|_{\Lb^2 (\R_+^m)} \\
&= \left(\int_{0}^\infty \cdots \int_{0}^\infty \left|\|L_\tau \Wc V_{m-1}f\|_2 - \|\Wc L_\tau V_{m-1}f\|_2\right|^2 \frac{d \lambda_1}{\lambda_1^{n+1}} \cdots \frac{d \lambda_m}{\lambda_{m}^{n+1}}\right)^{1/2}\\
&\leq \left(\int_{0}^\infty \cdots \int_{0}^\infty \|L_\tau \Wc V_{m-1}f-\Wc L_\tau V_{m-1}f\|_2^2 \frac{d \lambda_1}{\lambda_1^{n+1}} \cdots \frac{d \lambda_m}{\lambda_{m}^{n+1}}\right)^{1/2}\\
&= \|A_2 M [\Wc V_{m-1}, L_\tau] f \|_{\Lb^2 (\R_+^m)}.
\end{align*}
Now this is a commutator term, and we can now bound:
\begin{align*}
\|A_2M[\Wc V_{m-1}, L_\tau]f\|_{\Lb^2 (\R_+^m)}^2 &= \int_0^\infty \cdots \int_0^\infty \|[\Wc V_{m-1}, L_\tau] f \|_2^{2}\,\frac{d\lambda_1}{\lambda_1^{n+1}} \cdots \frac{d\lambda_m}{\lambda_m^{n+1}} \\
&=\||[\Wc V_{m-1}, L_\tau]f \|^2_{\Lb^2(\R_{+}^m \times \R^n)} \\
&\leq  \|[\Wc V_{m-1}, L_\tau]\|^2_{\Lb^2(\R_{+}^m \times \R^n) \to  \Lb^2(\R^n)} \|f\|_2^2.
\end{align*}
We examine the commutator term more closely. Without a loss of generality, assume $m \geq 2$. By expanding it, we see that each term contains $[\Wc,L_\tau]$. It follows that 
\begin{align*}
\norm{[\Wc V_{m-1},L_\tau]}_{\Lb^2(\R_{+}^m \times \R^n)} &\leq m \norm{\Wc}_{\Lb^2(\R_{+} \times \R^n) \to  \Lb^2(\R^n)} ^{m-1}\norm{M}_{\Lb^2(\R^n) \to \Lb^2(\R^n)}^{m-1}\norm{[\Wc,L_\tau]}_{\Lb^2(\R_+ \times\R^n) \to  \Lb^2(\R^n)}  \\
&\leq C_m \norm{[\Wc,L_\tau]}_{\Lb^2(\R_+ \times\R^n) \to  \Lb^2(\R^n)} .
\end{align*}

Thus, once we bound this quantity appropriately, we will finish the proof. We start by writing
\begin{equation*}
    \|[\Wc, L_\tau]f \|_{\Lb^2(\R_{+} \times \R^n)}^2 = \int_{0}^\infty \|(L_{\tau}f) \ast \psi_{\lambda} - L_\tau\left(f \ast \psi_{\lambda} \right)\|_2^2 \, \frac{d\lambda}{\lambda^{n+1}} \, .
\end{equation*}
By substitution with $z = (1-c)x$ and Lemma \ref{dilate wavelet},
\begin{align*}
    \|(L_{\tau}f) \ast \psi_{\lambda} - L_\tau\left(f \ast \psi_{\lambda} \right)\|_2^{2} &= \int_{\R^n}\left|\right(L_{\tau}f \ast \psi_{\lambda} \left)(x) - L_\tau\left(f \ast \psi_{\lambda} \right)(x)\right|^2 \, dx\\
    &= \int_{\R^n} \left|(1-c)^{-n/2}\left(f \ast \psi_{(1-c)\lambda}\right)((1-c)x) - \left(f \ast \psi_{\lambda} \right)((1-c)x) \right|^2 \, dx \\
    &= (1-c)^{-n} \int_{\R^n} \left|(1-c)^{-n/2}\left(f \ast \psi_{(1-c)\lambda}\right)(z) - \left(f \ast \psi_{\lambda} \right)(z) \right|^2 \, dz\\
    &= (1-c)^{-n}\int_{\R^n} \left|f \ast \left((1-c)^{-n/2}\psi_{(1-c)\lambda} - \psi_{\lambda} \right)\right|^2 \, dz\\
    &= (1-c)^{-n}\int_{\R^n} \left|\left(f \ast \Psi_{\lambda} \right)(z)\right|^2 \, dz, \\
    &= (1-c)^{-n} \| f \ast \Psi_{\lambda} \|_2^2 \, .
\end{align*} 
Thus, we obtain
\begin{align*}
\int_{0}^\infty\|(L_{\tau}f) \ast \psi_{\lambda} - L_\tau\left(f \ast \psi_{\lambda} \right)\|_2^{2} \, \frac{d\lambda}{\lambda^{n+1}} &= (1-c)^{-n}\int_{0}^\infty\|f \ast \Psi_{\lambda} \|_2^{2}\frac{d\lambda}{\lambda^{n+1}} \\
&= (1-c)^{-n}\int_{\mathbb{R}^n}\int_{0}^\infty |f \ast \Psi_{\lambda}(x)|^2\,\frac{d\lambda}{\lambda^{n+1}}\, dx\\
&= (1-c)^{-n} \left\|\left(\int_{0}^\infty |f \ast \Psi_{\lambda}(x)|^2\,\frac{d\lambda}{\lambda^{n+1}}\right)^{1/2} \right\|_2^2\\
&\leq c^2 \cdot \left(\frac{2n}{2n-1}\right)^{n} C_{n,p}\|f\|_2^{2}.
\end{align*} 

It follows that 
$$\|S_{\text{cont},2}^{m}f - S_{\text{cont},2}^{m}L_\tau f\|_{\Lb^2 (\R_+^m)} \leq |c| \cdot K_{n,m} \|f\|_2$$
for any $c < \tfrac{1}{2n}$.
\end{proof}

As is customary at this point, we have the following corollaries. We start with the case where $1 < q < 2$.
\begin{corollary} \label{corollary: m layer dilation stability for q != 2} Assume $|c| < \tfrac{1}{2n}$. For $q \in (1, 2)$, there exists constants $K_{n,m,q}$ and $\hat{K}_{n,m,q}$ such that 
$$\|S_{\text{cont},q}^m f - S_{\text{cont},q}^m L_\tau f \|_{\Lb^2(\R_{+}^m)}^q \leq  |c|^q \cdot K_{n,m,q} \|f\|_q^q$$
and
$$\|S_{\text{dyad},q}^m f - S_{\text{dyad},q}^m L_\tau f \|_{\ellb^2(\Z^m)}^q \leq  |c|^q \cdot \hat{K}_{n,m,q} \|f\|_q^q.$$
\end{corollary}
\begin{proof}
Without loss of generality again, assume $c > 0$. First, we will add and subtract $A_qML_\tau \Wc V_{m-1}f$ and apply triangle inequality:
\begin{align*}
\|S_{\text{cont},q}^{m}f - S_{\text{cont},q}^{m}L_\tau f\|_{\Lb^2 (\R_+^m)} &= \|A_qM\Wc V_{m-1} f -  A_qM\Wc V_{m-1}L_\tau f\|_{\Lb^2 (\R_+^m)} \\
&\leq\|A_qM\Wc V_{m-1}f - A_qML_\tau \Wc V_{m-1}f \|_{\Lb^2 (\R_+^m)} \\
&+ \|A_qML_\tau \Wc V_{m-1}f-A_qM\Wc V_{m-1}, L_\tau f\|_{\Lb^2 (\R_+^m)}.
\end{align*}

We'll start by bounding the first term again. Define $g = \Wc V_{m-1}f \in \mathbb{\Lb}^q(\mathbb{R}^n)$. and we have
$$|A_qM\Wc V_{m-1}f - A_qML_\tau \Wc V_{m-1}f| = \left| \|g\|_q - \|L_\tau g\|_q\right|.$$
By change of variables, 
$$\left| \|g\|_q - \|L_\tau g\|_q\right| = \|g\|_q\left(\frac{1}{(1-c)^{n/q}}-1\right) \leq \|g\|_q\left(\frac{1}{(1-c)^{n}}-1\right).$$ 
Again, we have
\begin{align*}
\|A_qM\Wc V_{m-1}f - A_qML_\tau \Wc V_{m-1}f\|_{\Lb^2 (\R_+^m)}^q &\leq \left(\frac{1}{(1-c)^{n/q}}-1\right)^q \|S_{\text{cont},2}^mf\|_{\Lb^2(\R_+^m)}^q \\
& \leq \left(\frac{1}{(1-c)^n}-1\right)^q\|S_{\text{cont},q}^mf\|_{\Lb^2(\R_+^m)}^q \\
& = \left[\frac{1-(1-c)^n}{(1-c)^n}\right]^q\|S_{\text{cont},q}^mf\|_{\Lb^2(\R_+^m)}^q \\
& = \left[\frac{1}{(1-c)^n} \sum_{j=1}^n \binom{n}{j} c^j\right]^q \|S_{\text{cont},q}^mf\|_{\Lb^2(\R_+^m)}^q \\
& \leq \left[\left(\frac{2n}{2n-1}\right)^{n}\sum_{j=1}^n \binom{n}{j} c^j\right]^q \|S_{\text{cont},q}^mf\|_{\Lb^2(\R_+^m)}^q\\
& \leq |c|^q \cdot C_{m,n}\|f\|_q^q.
\end{align*}
For the second term, apply Minkoski's inequality for $q$ norms:
\begin{align*}
&\|A_qML_\tau \Wc V_{m-1}f-A_qM\Wc V_{m-1}, L_\tau f\|_{\Lb^2 (\R_+^m)} \\
&= \left(\int_{0}^\infty \cdots \int_{0}^\infty \left|\|L_\tau \Wc V_{m-1}f\|_q - \|\Wc L_\tau V_{m-1}f\|_q\right|^2 \frac{d \lambda_1}{\lambda_1^{n+1}} \cdots \frac{d \lambda_m}{\lambda_{m}^{n+1}}\right)^{1/2}\\
&\leq \left(\int_{0}^\infty \cdots \int_{0}^\infty \|L_\tau \Wc V_{m-1}f-\Wc L_\tau V_{m-1}f\|_q^2 \frac{d \lambda_1}{\lambda_1^{n+1}} \cdots \frac{d \lambda_m}{\lambda_{m}^{n+1}}\right)^{1/2}\\
&= \|A_q M [\Wc V_{m-1}, L_\tau] f \|_{\Lb^2 (\R_+^m)}.
\end{align*}
Via a similar reduction technique  for Theorem \ref{dilate m layers wavelet}, we can reduce to a commutator bound $\|A_q M [\Wc, L_\tau] f \|_{\Lb^2 (\R_+^m)}.$ Additionally, we have 
\begin{align*}
    \|(L_{\tau}f) \ast \psi_{\lambda} - L_\tau\left(f \ast \psi_{\lambda} \right)\|_q^{q} &= (1-c)^{-n} \| f \ast \Psi_{\lambda} \|_q^q \, .
\end{align*} 
Thus,
\begin{align*}
\|A_q M [\Wc, L_\tau] f \|_{\Lb^2 (\R_+^m)}^q &= \left(\int_{0}^\infty\|(L_{\tau}f) \ast \psi_{\lambda} - L_\tau\left(f \ast \psi_{\lambda} \right)\|_q^{2} \, \frac{d\lambda}{\lambda^{n+1}}\right)^{q/2} \\
&=  (1-c)^{-n}\left(\int_{0}^\infty\|f \ast \Psi_{\lambda} \|_q^{2}\frac{d\lambda}{\lambda^{n+1}}\right)^{q/2} \\
&\leq (1-c)^{-n} \left\|\left(\int_{0}^\infty |f \ast \Psi_{\lambda}(x)|^2\,\frac{d\lambda}{\lambda^{n+1}}\right)^{1/2} \right\|_q^{q}\\
& \leq |c|^q \cdot \tilde{C}_{n} \|f\|_q^{q}.
\end{align*} 
It follows that 
$$\|S_{\text{cont},q}^{m}f - S_{\text{cont},q}^{m}L_\tau f\|_{\Lb^2 (\R_+^m)}^q \leq |c|^q \cdot K_{n,m} \|f\|_q^q$$
for any $|c| < \tfrac{1}{2n}$.
\end{proof}

Additionally, for the case of $q  = 1$, we have the following corollaries that we will state, but not prove, since the idea is the same as the previous corollary.
\begin{corollary}
Suppose one of the following holds:
\begin{itemize}
    \item $n = 1$, $\psi$ is complex analytic and satisfies the conditions of Lemma \ref{prop: newwavelet},
    \item $n \geq 2$ and $\psi$ satisfies the conditions of Lemma \ref{lem: riesz transform decay},
\end{itemize} then there exist constants $K_{H,m}$ and $\hat{K}_{H,m}$ such that
$$\|S_{\text{cont},1}^m f - S_{\text{cont},1}^m L_\tau f \|_{\Lb^2(\R_{+}^m)} \leq c \cdot K_{H,m} \|f\|_{\Hb^1(\mathbb{R}^n)}$$
and 
$$\|S_{\text{dyad},1}^mf - S_{\text{dyad},1}^m L_\tau f \|_{\ellb^2(\Z^m)} \leq  c \cdot \hat{K}_{H,m} \|f\|_{\Hb^1(\mathbb{R}^n)}.$$
\end{corollary}

\section{Stability to Diffeomorphisms}
We now focus on the stability of $S_{\text{cont},q}^{m} f$ for general diffeomorphisms with $\|D\tau\|_\infty < \frac{1}{2n}$. The corresponding operator for diffeomorphisms is defined as $L_{\tau}f(x) = f(x - \tau(x))$. 

\subsection{Stability to Diffeomorphisms When $q = 2$}


\begin{proposition}
	\label{lem:Cont214}
    Assume $\psi$ and its first and second order derivatives have decay\footnote{Similar to \cite{nicola}, we have found that there needs to be $O((1+|x|)^{-n-2 + \alpha})$ decay for some $\alpha > 0$ to bound (E.26) in \cite{mallat2012group}.} in $O((1+|x|)^{-n-3})$, and $\int_{\mathbb{R}^n} \psi(x)\ dx=0$. Then for every $\tau \in C^2(\mathbb{R}^n)$ with $\norm{D\tau}_{\infty} \leq \frac{1}{2n}$, there exists $\tilde{C}_n>0$ such that:
	\begin{align*}
	\norm{[\Wc,L_\tau]}_{\Lb^2(\R_{+} \times \R^n) \to  \Lb^2(\R^n)}\leq \tilde{C}_n \left(\norm{D\tau}_{\infty}\left(\log\frac{\norm{\Delta\tau}_\infty}{\norm{D\tau}_\infty} \vee 1\right)+\norm{D^2\tau}_\infty\right).
	\end{align*}
\end{proposition}
\begin{proof}
The argument is a continuous version of Lemma 2.14 in \cite{mallat2012group}. We will first show how to transform our commutator term into an analogous commutator term from \cite{mallat2012group}. To shorten notation, we will denote $\norm{[\Wc,L_\tau]}_{\Lb^2(\R_{+} \times \R^n)}$ as $\norm{[\Wc,L_\tau]}$. We have
 \begin{align*}
	\norm{[\Wc,L_\tau]f}_{\Lb^2(\R_{+}\times \R^n)}^2 &= \int_0^\infty \norm{[\Wc_t,L_\tau]f}_2^2\ \frac{d t}{t^{n+1}} \\
	&= \int_0^\infty \|\psi_t \ast (L_\tau f) - L_\tau(\psi_t \ast f)  \|_2^2 \frac{dt}{t^{n+1}} \\
    &= \int_0^\infty \int_{\mathbb{R}^n} \left|\psi_t \ast (L_\tau f) - L_\tau(\psi_t \ast f)\right|^2 \, dx \, \frac{dt}{t^{n+1}}.
	\end{align*}
Notice that $\psi_\frac{1}{t}\left(x\right) = t^{n/2}\psi(t x)$. Use the change of variables $t = \frac{1}{\lambda}$ to get
\begin{align*}
\norm{[\Wc,L_\tau]f}_{\Lb^2(\R_{+} \times \R^n)}^2 &= \int_0^\infty \left\|\psi_{\frac{1}{\lambda}} \ast (L_\tau f) -  L_\tau( \psi_{\frac{1}{\lambda}} \ast f)\right\|_2^2 \, \lambda^{n-1}\, d\lambda \\
&= \int_0^\infty \left\|\lambda^{n/2}\psi_{\frac{1}{\lambda}} \ast (L_\tau f) -  L_\tau(\lambda^{n/2}\psi_{\frac{1}{\lambda}} \ast f)\right\|_2^2 \, \frac{d \lambda}{\lambda}.
\end{align*}
Define $\mathscr{W}_\lambda f = f \ast \lambda^{n/2}\psi_{\frac{1}{\lambda}}$ with $\lambda^{n/2}\psi_{\frac{1}{\lambda}}(x) = \lambda^{n}\psi(\lambda x).$ In other words, $\mathscr{W}_t$ is a convolution with an $\Lb^1$ normalized wavelet, which matches with the normalization in \cite{mallat2012group}. Now we have 
	\begin{align*}
	\norm{[\Wc,L_\tau]f}_{\Lb^2(\R_{+} \times \R^n)}^2 &= \int_0^\infty \norm{[\mathscr{W}_\lambda,L_\tau]f}_2^2 \, \frac{d\lambda}{\lambda}.
	\end{align*}
	This implies 
	\begin{align*}
	[\Wc,L_\tau]^*[\Wc,L_\tau] &= \int_0^\infty [\mathscr{W}_\lambda,L_\tau]^*[\mathscr{W}_\lambda,L_\tau] \, \frac{d\lambda}{\lambda}
	\end{align*}
	Defining $K_\lambda = \mathscr{W}_\lambda - L_\tau \mathscr{W}_\lambda L_\tau^{-1}$ so that $[\mathscr{W}_\lambda,L_\tau] =K_\lambda L_\tau$, we have:
	\begin{align*}
	\norm{[\Wc,L_\tau]} &= \norm{[\Wc,L_\tau]^*[\Wc,L_\tau]}^{1/2} \\
	&= \norm[\bigg]{\int_0^\infty [\mathscr{W}_\lambda,L_\tau]^*[\mathscr{W}_\lambda,L_\tau] \, \frac{d\lambda}{\lambda}}^{1/2} \\
	&= \norm[\bigg]{\int_0^\infty L_\tau^*K_\lambda^*K_\lambda L_\tau\ \frac{d\lambda}{\lambda}}^{1/2} \\
	&\leq \norm{L_\tau}\cdot\norm[\bigg]{\int_0^\infty K_\lambda^*K_\lambda \ \frac{d\lambda}{\lambda}}^{1/2},
	\end{align*}
    Since $\norm{L_\tau f}_2^2 \leq \left(\frac{1}{1-n\norm{D\tau}_\infty}\right) \norm{f}_2^2$, $$\norm{L_\tau} \leq \frac{1}{1-n\norm{D\tau}_\infty} \leq 2$$ and the problem is reduced to bounding $\left\|\int_0^\infty K_\lambda^*K_\lambda\ \lambda^{-1}\ d\lambda\right\|^{1/2}$. Let $\gamma \geq 1$. The integral is divided into three pieces:
	\begin{align*}
	\norm[\bigg]{\int_0^\infty K_\lambda^*K_\lambda \ \frac{d\lambda}{\lambda}}^{1/2} 
	&\leq \left(\norm[\bigg]{\int_0^{2^{-\gamma}} K_\lambda^*K_\lambda \ \frac{d\lambda}{\lambda}}+\norm[\bigg]{\int_{2^{-\gamma}}^1 K_\lambda^*K_\lambda \ \frac{d\lambda}{\lambda}}+\norm[\bigg]{\int_1^\infty K_\lambda^*K_\lambda \ \frac{d\lambda}{\lambda}}\right)^{1/2} \\
	&\leq \norm[\bigg]{\int_0^{2^{-\gamma}} K_\lambda^*K_\lambda \ \frac{d\lambda}{\lambda}}^{1/2}+\norm[\bigg]{\int_{2^{-\gamma}}^1 K_\lambda^*K_\lambda \ \frac{d\lambda}{\lambda}}^{1/2}+\norm[\bigg]{\int_1^\infty K_\lambda^*K_\lambda \ \frac{d\lambda}{\lambda}}^{1/2}\\
	&= P_1 + P_2 + P_3.
	\end{align*}
	
	To bound $P_1$, we decompose $K_\lambda = \tilde{K}_{\lambda,1} +\tilde{K}_{\lambda,2}$, where the kernels defining $\tilde{K}_{\lambda,1}, \tilde{K}_{\lambda,2}$ are
	\begin{align*}
	\tilde{k}_{\lambda,1}(x,u) &:= (1- \det(I - D\tau(u))) \lambda^{n} \psi(\lambda(x-u)) \\
    &:= a(u) \lambda^{n} \psi(\lambda(x-u)),\\
	\tilde{k}_{\lambda,2}(x,u) &:=\det(I - D\tau(u))(\lambda^{n}\psi(\lambda(x-u))-\lambda^{n}\psi(\lambda(x-\tau(x)-u+\tau(u))),
	\end{align*}
	respectively. Since our normalization matches with \cite{mallat2012group}, E.13 implies that there exists a constant $C_n$ such that
	$$\norm{\tilde{K}_{\lambda,2}} \leq C_n\lambda \norm{\Delta\tau}_\infty.$$
	We want to prove that
	$$\norm[\bigg]{\int_0^{1} \tilde{K}_{\lambda,1}^*\tilde{K}_{\lambda,1} \ \frac{d\lambda}{\lambda}}^{1/2}  \leq C_n\norm{D\tau}_\infty.$$
    Let $f \in \Lb^2(\mathbb{R}^n)$ be arbitrary and define $\tilde{\psi}(t) = \psi^{*}(-t)$. Based on \cite{mallat2012group}, the kernel of $K^{*}_{\lambda, 1}K_{\lambda, 1}$ is given by 
    $$\tilde{k}_\lambda(y,z) := a(y)a(z) \lambda^{n/2}\tilde{\psi}_{\tfrac{1}{\lambda}} \ast \lambda^{n/2}\tilde{\psi}_{\tfrac{1}{\lambda}}(z-y).$$
    Thus, it is sufficient to bound the quantity
    $$\int_{0}^1 \|K_{\lambda, 1}^{*}K_{\lambda, 1}f\|_2^2 \frac{d\lambda}{\lambda}.$$
    We see that $\|a\|_\infty \leq n \|D\tau\|_\infty.$ Substituting in the kernel and bounding yields
    \begin{align*}
        \int_{0}^1 \|K_{\lambda, 1}^{*}K_{\lambda, 1}f\|_2^2 \frac{d\lambda}{\lambda} &= \int_{0}^1\int_{\mathbb{R}^n} \left| \int_{\mathbb{R}^n}  a(y)a(z) \left(\lambda^{n/2}\tilde{\psi}_{\tfrac{1}{\lambda}} \ast \lambda^{n/2}{\psi}_{\tfrac{1}{\lambda}}\right) (z-y) f(y)  \, dy \right|^2  \, dz \, \frac{d \lambda}{\lambda}\\
        &=\int_{0}^1\int_{\mathbb{R}^n} |a(z)|^2\left| \int_{\mathbb{R}^n}  a(y) \left(\lambda^{n/2}\tilde{\psi}_{\tfrac{1}{\lambda}} \ast \lambda^{n/2}{\psi}_{\tfrac{1}{\lambda}}\right) (z-y) f(y)  \, dy \right|^2  \, dz \, \frac{d \lambda}{\lambda}\\
        & \leq n^2 \|D \tau\|_\infty^2 \int_{0}^1\int_{\mathbb{R}^n} \left| \int_{\mathbb{R}^n}  a(y) \left(\lambda^{n/2}\tilde{\psi}_{\tfrac{1}{\lambda}} \ast \lambda^{n/2}{\psi}_{\tfrac{1}{\lambda}}\right) (z-y) f(y)  \, dy \right|^2  \, dz \, \frac{d \lambda}{\lambda}.
    \end{align*}
    Let $F(y) = a(y)f(y) \in \Lb^2(\mathbb{R}^n)$ and let $\mathcal{F}$ represent taking the Fourier Transform. Then we substitute $F(y)$ for $a(y)f(y)$ in the last line of the inequality above to get 
    \begin{align*}
        & n^2 \|D \tau\|_\infty^2 \int_{0}^1\int_{\mathbb{R}^n} \left| \int_{\mathbb{R}^n}  a(y) \left(\lambda^{n/2}\tilde{\psi}_{\tfrac{1}{\lambda}} \ast \lambda^{n/2}{\psi}_{\tfrac{1}{\lambda}}\right) (z-y) f(y)  \, dy \right|^2  \, dz \, \frac{d \lambda}{\lambda} \\
        &= n^2 \|D \tau\|_\infty^2 \int_{0}^1\int_{\mathbb{R}^n} \left| \int_{\mathbb{R}^n}  \left(\lambda^{n/2}\tilde{\psi}_{\tfrac{1}{\lambda}} \ast \lambda^{n/2}{\psi}_{\tfrac{1}{\lambda}}\right) (z-y) F(y)  \, dy \right|^2  \, dz \, \frac{d \lambda}{\lambda} \\
        &=  n^2 \|D \tau\|_\infty^2 \int_{0}^1\int_{\mathbb{R}^n} \left| \mathcal{F}\left(\lambda^{n/2}\tilde{\psi}_{\tfrac{1}{\lambda}} \ast \lambda^{n/2}{\psi}_{\tfrac{1}{\lambda}}\right) (\omega) \hat{F}(\omega)  \right|^2  \, dz \, \frac{d \lambda}{\lambda} \\
        &= n^2 \|D \tau\|_\infty^2  \int_{\mathbb{R}^n} |\hat{F}(\omega)|^2 \left(\int_{0}^1 |\hat{\psi}(\tfrac{\omega}{\lambda})|^4
\frac{d \lambda}{\lambda}\right) \, d\omega.
    \end{align*}
To finish up the argument, we make a substitution to rewrite 
$$\int_{0}^1 |\hat{\psi}(\tfrac{\omega}{\lambda})|^4
\frac{d \lambda}{\lambda} = \int_{1}^\infty |\hat{\psi}(\lambda \omega)|^4
\frac{d \lambda}{\lambda}.$$
Using our decay assumptions on $\psi$ and its partial derivatives, from Problem 6.1.3 in \cite{grafakos}, we know that
$$|\hat{\psi}(\omega)| \leq  M_\psi\text{min}\{|\omega|, |\omega|^{-2}\}$$
for some constant $M_\psi.$ Now, consider the quantity $\int_{0}^\infty |\hat{\psi}(\lambda \omega)|^4
\frac{d \lambda}{\lambda}.$ Without loss of generality, assume that $|\omega| = 1$ since dilations of $\omega$ do not change the integral. It follows that 
$$\int_{0}^\infty |\hat{\psi}(\lambda \omega)|^4
\frac{d \lambda}{\lambda} \leq M_\psi \int_{0}^1 \lambda^3 d\lambda + M_\psi\int_{1}^\infty \lambda^{-9} d\lambda  < \infty,$$
and we conclude that 
$$ \int_{1}^\infty |\hat{\psi}(\lambda \omega)|^4
\frac{d \lambda}{\lambda}\leq A_\psi$$
for some constant $A_\psi$. To finish up, 
\begin{align*}
n^2 \|D \tau\|_\infty^2  \int_{\mathbb{R}^n} |\hat{F}(\omega)|^2 \left(\int_{0}^1 |\hat{\psi}(\tfrac{\omega}{\lambda})|^4
\frac{d \lambda}{\lambda}\right) \, d\omega &\leq n^2 \|D \tau\|_\infty^2 A_\psi \int_{\mathbb{R}^n} |\hat{F}(\omega)|^2  \, d\omega \\
&\leq n^2 \|D \tau\|_\infty^2 A_\psi \int_{\mathbb{R}^n} |a(z)f(z)|^2  \, dz \\
& \leq n^2 \|D \tau\|_\infty^2 A_\psi \|f\|_2^2.
\end{align*}Thus, we have the desired bound on $\norm[\bigg]{\int_0^{1} \tilde{K}_{\lambda,1}^*\tilde{K}_{\lambda,1} \ \frac{d\lambda}{\lambda}}^{1/2}$.

Substituting everything in yields
	\begin{align*}
	\norm[\bigg]{\int_0^{2^{-\gamma}} K_\lambda^*K_\lambda \ \frac{d\lambda}{\lambda}}^{1/2} &= \norm[\bigg]{\int_0^{2^{-\gamma}} (\tilde{K}_{\lambda,1}+\tilde{K}_{\lambda,2})^*(\tilde{K}_{\lambda,1}+\tilde{K}_{\lambda,2}) \ \frac{d\lambda}{\lambda}}^{1/2} \\
	&= \norm[\bigg]{\int_0^{2^{-\gamma}} (\tilde{K}_{\lambda,1}^*\tilde{K}_{\lambda,1}+\tilde{K}_{\lambda,1}^*\tilde{K}_{\lambda,2}+\tilde{K}_{\lambda,2}^*\tilde{K}_{\lambda,1}+\tilde{K}_{\lambda,2}^*\tilde{K}_{\lambda,2}) \ \frac{d\lambda}{\lambda}}^{1/2} \\
	&\leq \left(\norm[\bigg]{\int_0^{2^{-\gamma}} \tilde{K}_{\lambda,1}^*\tilde{K}_{\lambda,1} \frac{d\lambda}{\lambda}}+\norm[\bigg]{\int_0^{2^{-\gamma}} \tilde{K}_{\lambda,1}^*\tilde{K}_{\lambda,2}+\tilde{K}_{\lambda,2}^*\tilde{K}_{\lambda,1}+\tilde{K}_{\lambda,2}^*\tilde{K}_{\lambda,2} \ \frac{d\lambda}{\lambda}}\right)^{1/2} \\ 
	&\leq \left(\norm[\bigg]{\int_0^{2^{-\gamma}} \tilde{K}_{\lambda,1}^*\tilde{K}_{\lambda,1} \frac{d\lambda}{\lambda}}+\int_0^{2^{-\gamma}} \norm{\tilde{K}_{\lambda,2}}^2\ \frac{d\lambda}{\lambda}+\int_0^{2^{-\gamma}}2\norm{\tilde{K}_{\lambda,1}}\norm{\tilde{K}_{\lambda,2}} \ \frac{d\lambda}{\lambda}\right)^{1/2} \\
	&\leq \norm[\bigg]{\int_0^{2^{-\gamma}} \tilde{K}_{\lambda,1}^*\tilde{K}_{\lambda,1} \frac{d\lambda}{\lambda}}^{1/2}+\left(\int_0^{2^{-\gamma}} \norm{\tilde{K}_{\lambda,2}}^2\ \frac{d\lambda}{\lambda}\right)^{1/2}+\left(\int_0^{2^{-\gamma}}2\norm{\tilde{K}_{\lambda,1}}\norm{\tilde{K}_{\lambda,2}} \ \frac{d\lambda}{\lambda}\right)^{1/2} \\
	&\leq 2C_n\left(\norm{D\tau}_\infty + \norm{\Delta\tau}_\infty\left(\int_0^{2^{-\gamma}} \lambda^2\ \frac{d\lambda}{\lambda}\right)^{1/2}+\norm{D\tau}_\infty^{1/2}\norm{\Delta\tau}_\infty^{1/2} \left(\int_0^{2^{-\gamma}} 2\lambda \ \frac{d\lambda}{\lambda}\right)^{1/2} \right) \\
	&\leq 2C_n\left(\norm{D\tau}_\infty + 2^{-\gamma} \norm{\Delta\tau}_\infty +2^{-\gamma/2}\norm{D\tau}_\infty^{1/2} \norm{\Delta\tau}_\infty^{1/2} \right) \\
	&\leq 4C_n\left(\norm{D\tau}_\infty + 2^{-\gamma} \norm{\Delta\tau}_\infty \right).
	\end{align*}

To bound $P_3$, we decompose $K_\lambda = K_{\lambda,1}+K_{\lambda,2}$, where the kernels defining $K_{\lambda,1}, K_{\lambda,2}$ are
	\begin{align*}
	k_{\lambda,1}(x,u) &= \lambda^n\psi(\lambda (x-u)) - \lambda^{n}\psi(\lambda(I-D\tau(u))(x-u))\det(I-D\tau(u)) \\
	k_{\lambda,2}(x,u) &=\det(I-D\tau(u)) \lambda^{n}\psi(\lambda(I-D\tau(u))(x-u)) - \lambda^{n}\psi(\lambda(x-\tau(x)-u+\tau(u))).
	\end{align*}
	A similar computation to the one for $P_1$ shows that:
	\begin{align*}
	\norm[\bigg]{\int_1^{\infty} K_\lambda^*K_\lambda \ \frac{d\lambda}{\lambda}}^{1/2} &\leq \norm[\bigg]{\int_1^{\infty} K_{\lambda,1}^*K_{\lambda,1} \frac{d\lambda}{\lambda}}^{1/2}+\left(\int_1^{\infty} \norm{K_{\lambda,2}}^2\ \frac{d\lambda}{\lambda}\right)^{1/2}+\left(\int_1^{\infty}2\norm{K_{\lambda,1}}\norm{K_{\lambda,2}} \ \frac{d\lambda}{\lambda}\right)^{1/2} 
	\end{align*}
	Letting $Q_j = K_{2^j,1}^*K_{2^j,1}$, it is shown in \cite{mallat2012group} that:
		 \begin{align*}
	\norm{K_{\lambda,1}} &\leq C_n\norm{D\tau}_\infty \\
	\norm{K_{\lambda,2}} &\leq \min\{\lambda^{-n}\norm{D^2\tau}_\infty, \norm{D\tau}_\infty \} \\
	\norm{Q_{j} Q_{\ell}} &\leq C_n^2 2^{-|j-\ell|}(\norm{D\tau}_\infty+\norm{D^2\tau}_\infty)^4
	\end{align*}
	so that
	\begin{align*}
	 \norm[\bigg]{\int_1^{\infty} K_{\lambda,1}^*K_{\lambda,1} \frac{d\lambda}{\lambda}}^{1/2}
	 &= \norm[\bigg]{\int_0^{\infty} K_{2^j,1}^*K_{2^j,1} \log(2)\ dj }^{1/2} \\
	 &= \sqrt{\log(2)}\ \norm[\bigg]{\int_0^{\infty} Q_j \ dj }^{1/2}. 
	\end{align*}
	We now apply a continuous version of Cotlar's Lemma (see Ch. 7 of \cite{stein2016harmonic}, Sec. 5.5 for the continuous extension). We define: 
	\begin{align*}
	\beta(j,\ell) &= \begin{cases} C_n 2^{-|j-\ell|/2}(\norm{D\tau}_\infty+\norm{D^2\tau}_\infty)^2 & j \geq 0 \text{ and } \ell \geq 0 \\
	0 & \text{otherwise} \end{cases}.
	\end{align*}
	Defining $Q_j=0$ for $j<0$, we have $\norm{Q_j^*Q_\ell} \leq \beta(j,\ell)^2$ and $\norm{Q_jQ_\ell^*} \leq \beta(j,\ell)^2$ for all $j,\ell$. Thus by Cotlar's Lemma:
	\begin{align*}
	\norm[\bigg]{\int_{\mathbb{R}} Q_j \ dj } &\leq \sup_{j \in \mathbb{R}} \int_{\mathbb{R}} \beta(j,\ell)\ d\ell, \\
	\norm[\bigg]{\int_{0}^\infty Q_j \ dj } &\leq \sup_{j \geq 0} \int_0^\infty \beta(j,\ell)\ d\ell \\
	&\leq C_n(\norm{D\tau}_\infty+\norm{H\tau}_\infty)^2\left( \sup_{j \geq 0} \int_0^\infty 2^{-|j-\ell|/2} \ d\ell \right).
	\end{align*}
	Now observing that with the change of variable $\lambda_1 =2^j, \lambda_2=2^\ell$, we have $2^{-|j-\ell|/2} = \frac{\lambda_1}{\lambda_2} \wedge \frac{\lambda_2}{\lambda_1}$, we obtain:
	\begin{align*}
	\sup_{j \geq 0} \int_0^\infty 2^{-|j-\ell|/2} \ d\ell &= \sup_{\lambda_1 \geq 1} \int_1^\infty \frac{(\lambda_1 \wedge \lambda_2)}{\sqrt{\lambda_1\lambda_2}} \ \frac{d\lambda_2}{\ln(2) \lambda_2} \\
	&= \frac{1}{\ln(2)} \sup_{\lambda_1 \geq 1} \left(\int_1^{\lambda_1} \frac{1}{\sqrt{\lambda_1\lambda_2}}\ d\lambda_2 + \int_{\lambda_1}^\infty  \frac{\sqrt{\lambda_1}}{\lambda_2^{3/2}}\ d\lambda_2 \right) \\
	&= \frac{1}{\ln(2)} \sup_{\lambda_1 \geq 1} \left(\frac{1}{\sqrt{\lambda_1}}(2\sqrt{\lambda_1}-2)+\sqrt{\lambda_1}\left(\frac{2}{\sqrt{\lambda_1}}\right)\right) \\
	&= \frac{1}{\ln(2)} \sup_{\lambda_1 \geq 1} \left(4-\frac{2}{\sqrt{\lambda_1}}\right) \\
	&= \frac{4}{\ln(2)}
	\end{align*}
	and conclude that
	\begin{align*}
	\norm[\bigg]{\int_1^{\infty} K_{\lambda,1}^*K_{\lambda,1} \frac{d\lambda}{\lambda}}^{1/2} &\leq 3 C_n(\norm{D\tau}_\infty+\norm{H\tau}_\infty).
	\end{align*}	
Thus we have:
\begin{align*}
\norm[\bigg]{\int_1^{\infty} K_\lambda^*K_\lambda \ \frac{d\lambda}{\lambda}}^{1/2} &\leq \norm[\bigg]{\int_1^{\infty} K_{\lambda,1}^*K_{\lambda,1} \frac{d\lambda}{\lambda}}^{1/2}+\left(\int_1^{\infty} \norm{K_{\lambda,2}}^2\ \frac{d\lambda}{\lambda}\right)^{1/2}+\left(\int_1^{\infty}2\norm{K_{\lambda,1}}\norm{K_{\lambda,2}} \ \frac{d\lambda}{\lambda}\right)^{1/2}.
\end{align*}
Now we see that there exists a constant $C_n$ such that 
\begin{align*}
\norm[\bigg]{\int_1^{\infty} K_{\lambda,1}^*K_{\lambda,1} \frac{d\lambda}{\lambda}}^{1/2} & \leq C_n(\norm{D\tau}_\infty+\norm{D^2\tau}_\infty) \\
\left(\int_1^{\infty} \norm{K_{\lambda,2}}^2\ \frac{d\lambda}{\lambda}\right)^{1/2} & \leq C_n\norm{D^2\tau}_\infty\left(\int_1^{\infty} \lambda^{-2n}\ \frac{d\lambda}{\lambda}\right)^{1/2} \\
\left(\int_1^{\infty}2\norm{K_{\lambda,1}}\norm{K_{\lambda,2}} \ \frac{d\lambda}{\lambda}\right)^{1/2}   & \leq C_n\norm{D\tau}_\infty^{1/2}\norm{D^2\tau}_\infty^{1/2} \left(\int_1^{\infty}2\lambda^{-n} \ \frac{d\lambda}{\lambda}\right)^{1/2}.  
\end{align*}
and
\begin{align*}
\norm[\bigg]{\int_1^{\infty} K_\lambda^*K_\lambda \ \frac{d\lambda}{\lambda}}^{1/2} &\leq C_n\left(\norm{D\tau}_\infty+\frac{1}{2n}\norm{D^2\tau}_\infty + \frac{2}{n} \norm{D\tau}_\infty^{1/2}\norm{D^2\tau}_\infty^{1/2}\right) \\
&\leq C_n \left(\norm{D\tau}_\infty+\frac{1}{2n}\norm{D^2\tau}_\infty + \frac{1}{n} \norm{D\tau}_\infty + \frac{1}{n}\norm{D^2\tau}_\infty\right) \\
&\leq 2C_n(\norm{D\tau}_\infty+\norm{D^2\tau}_\infty).
\end{align*}

Finally, we bound $P_2$. Note that in the previous section it was observed (shown in \cite{mallat2012group}) that	
\begin{align*}
\norm{K_{\lambda,1}} &\leq C_n\norm{D\tau}_\infty \\
\norm{K_{\lambda,2}} &\leq \min\{\lambda^{-n}\norm{D^2\tau}_\infty, \norm{D\tau}_\infty \}.
\end{align*}
The above two inequalities imply
\begin{align*}
\norm{K_\lambda} &=\norm{K_{\lambda,1}+K_{\lambda,2}} \leq \norm{K_{\lambda,1}} + \norm{K_{\lambda,2}} \leq 2C_n\norm{D\tau}_\infty
\end{align*}	
so that
\begin{align*}
\norm[\bigg]{\int_{2^{-\gamma}}^1 K_\lambda^*K_\lambda \ \frac{d\lambda}{\lambda}}^{1/2} &\leq \left( \int_{2^{-\gamma}}^1 \norm{K_\lambda}^2 \ \frac{d\lambda}{\lambda} \right)^{1/2} \\
&\leq 2C_n\norm{D\tau}_\infty \left( \int_{2^{-\gamma}}^1 \ \frac{d\lambda}{\lambda} \right)^{1/2} \\
&\leq 2C_n\norm{D\tau}_\infty \left( -\ln(2^{-\gamma}) \right)^{1/2} \\
&\leq 2C_n\gamma^{1/2}\norm{D\tau}_\infty.
\end{align*} 
Putting everything together and since $\gamma \geq 1$, we obtain:
\begin{align*}
\norm{[\Wc,L_\tau]} &\leq 2(P_1+P_2+P_3) \\
&\leq 4C_n\left(\norm{D\tau}_\infty + 2^{-\gamma} \norm{\Delta\tau}_\infty \right) + 2C_n\gamma^{1/2}\norm{D\tau}_\infty + 3C_n(\norm{D\tau}_\infty+\norm{D^2\tau}_\infty) \\
&\leq \tilde{C}_n\left(\gamma\norm{D\tau}_\infty + 2^{-\gamma} \norm{\Delta\tau}_\infty +\norm{D^2\tau}_\infty\right). 
\end{align*}
Choosing $\gamma = \left(\log\frac{\norm{\Delta\tau}_\infty}{\norm{D\tau}_\infty}\right) \vee 1$ gives
\begin{align*}
\norm{[\Wc,L_\tau]} \leq \tilde{C}_n \left(\left(\log\frac{\norm{\Delta\tau}_\infty}{\norm{D\tau}_\infty} \vee 1\right)\norm{D\tau}_\infty + \norm{D^2\tau}_\infty\right),
\end{align*}
and the lemma is proved.
\end{proof}

\begin{theorem}
	\label{prop:genstabcont}
	Assume $\psi$ and its first and second order derivatives have decay in $O((1+|x|)^{-n-3})$ and $\int_{\mathbb{R}^n} \psi(x)\ dx=0$. Then for every $\tau \in C^2(\mathbb{R}^n)$ with $\norm{D\tau}_{\infty} \leq \frac{1}{2n}$, there exists $C_{m,n}>0$ and $\hat{C}_{m,n} > 0$ such that
	\begin{align*}
	\norm{S_{\text{cont},2}^mf - S_{\text{cont},2}^mL_\tau f}_{\Lb^2(\mathbb{R}_+^m)}^2 &\leq C_{m,n}\left(\norm{D\tau}_{\infty}^2+\left(\norm{D\tau}_{\infty}\left(\log\frac{\norm{\Delta\tau}_\infty}{\norm{D\tau}_\infty} \vee 1\right)+\norm{D^2\tau}_\infty\right)^2\right)\norm{f}_2^2.
	\end{align*}
    and
 \begin{align*}
 \norm{S_{\text{dyad},2}^mf - S_{\text{dyad},2}^mL_\tau f}_{\ellb^2(\mathbb{Z}^m)}^2 &\leq \hat{C}_{m,n}\left(\norm{D\tau}_{\infty}^2+\left(\norm{D\tau}_{\infty}\left(\log\frac{\norm{\Delta\tau}_\infty}{\norm{D\tau}_\infty} \vee 1\right)+\norm{D^2\tau}_\infty\right)^2\right)\norm{f}_2^2.
 \end{align*}
\end{theorem}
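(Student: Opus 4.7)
The plan is to mirror the proof of Theorem~\ref{dilate m layers wavelet}, replacing the scalar dilation estimates with the diffeomorphism commutator bound of Corollary~\ref{commutator bound}. Using the shorthand $Mf=|f|$, $A_2 f=\|f\|_2^2$, $U_\lambda=M\Wc_\lambda$, and $V_{m-1}=U_{\lambda_{m-1}}\cdots U_{\lambda_1}$, we have $S_{\text{cont},2}^m=A_2 M\Wc V_{m-1}$. Inserting $\pm A_2 M L_\tau\Wc V_{m-1}f$ yields
\begin{align*}
\|S_{\text{cont},2}^m f - S_{\text{cont},2}^m L_\tau f\|_{\Lb^1(\R_+^m)} &\leq \underbrace{\|A_2 M\Wc V_{m-1}f - A_2 M L_\tau\Wc V_{m-1}f\|_{\Lb^1(\R_+^m)}}_{T_1}\\
&\quad + \underbrace{\|A_2 M[\Wc V_{m-1}, L_\tau]f\|_{\Lb^1(\R_+^m)}}_{T_2},
\end{align*}
where $T_1$ will contribute the linear $\|D\tau\|_\infty$ term and $T_2$ the squared term.

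For $T_1$, since $A_2 Mg=\|g\|_2^2$ and $L_\tau$ commutes with $M$, each fiber $(\lambda_1,\ldots,\lambda_m)$ contributes $\bigl|\|g_\lambda\|_2^2-\|L_\tau g_\lambda\|_2^2\bigr|$ where $g_\lambda=\Wc_{\lambda_m}V_{m-1}f$. The assumption $\|D\tau\|_\infty<1/(2n)$ keeps $|\det(I-D\tau)|$ uniformly bounded away from zero, so the change of variables $y=x-\tau(x)$ together with a first-order expansion of the Jacobian gives $\bigl|1-|\det(I-D\tau)|^{-1}\bigr|\leq C_n\|D\tau\|_\infty$, whence $\bigl|\|g_\lambda\|_2^2-\|L_\tau g_\lambda\|_2^2\bigr|\leq C_n\|D\tau\|_\infty\|g_\lambda\|_2^2$. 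Integrating against the weighted measure on $\R_+^m$ and invoking the norm identity of Proposition~\ref{prop:L1MapMultipleLayersInversion} bounds $T_1$ by $C_{m,n}\|D\tau\|_\infty\|f\|_2^2$.

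For $T_2$, the operator $A_2 M$ is a squared $\Lb^2$ norm of the absolute value, so Fubini gives $T_2=\|[\Wc V_{m-1},L_\tau]f\|^2$ in the mixed-norm space on $\R_+^m\times\R^n$ with the weighted measure. Applying $[AB,L_\tau]=[A,L_\tau]B+A[B,L_\tau]$ inductively, together with $[M,L_\tau]=0$ (since $L_\tau$ acts pointwise in $x$), yields the expansion
\begin{equation*}
[\Wc V_{m-1}, L_\tau]=[\Wc,L_\tau]V_{m-1}+\sum_{k=1}^{m-1}\Wc\,U_{\lambda_{m-1}}\cdots U_{\lambda_{k+1}}\,M[\Wc_{\lambda_k},L_\tau]\,U_{\lambda_{k-1}}\cdots U_{\lambda_1},
\end{equation*}
a sum of $m$ operator products each containing exactly one commutator. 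Combining $\|M\|\leq 1$, the Littlewood--Paley boundedness $\|\Wc V_{m-1}\|\leq C_\psi^{m/2}$ from Proposition~\ref{prop:L1MapMultipleLayersInversion}, and Corollary~\ref{commutator bound} gives
\begin{equation*}
\|[\Wc V_{m-1}, L_\tau]\|\leq C_{m,n}\left(\|D\tau\|_\infty\left(\log\frac{\|\Delta\tau\|_\infty}{\|D\tau\|_\infty}\vee 1\right)+\|D^2\tau\|_\infty\right),
\end{equation*}
so that $T_2\leq C_{m,n}(\cdots)^2\|f\|_2^2$. Adding the $T_1$ and $T_2$ bounds yields the claim.

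The main obstacle lies in controlling $T_2$: the single-scale operators $\Wc_\lambda$ are not uniformly bounded on $\Lb^2(\R^n)$ as $\lambda$ varies, so each of the $m$ pieces of the expansion must have its intermediate scales integrated against the Littlewood--Paley measure (exactly as in Proposition~\ref{prop:L1MapMultipleLayersInversion}) in order to convert the fiberwise single-scale commutator estimate of Proposition~\ref{lem:Cont214} into a genuine mixed-norm operator bound on $[\Wc V_{m-1},L_\tau]$. Keeping track of this scale-by-scale passage while preserving the linear combinatorial factor $O(m)$ and avoiding any multiplicative blowup in $\lambda$ is the key technical point.
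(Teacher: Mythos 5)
Your proposal follows essentially the same route as the paper's proof: the same $T_1+T_2$ decomposition, the same Jacobian change-of-variables estimate producing the linear $\|D\tau\|_\infty$ term, and the same Leibniz expansion of the commutator (with $[M,L_\tau]=0$) combined with Corollary~\ref{commutator bound} to produce the squared term. One small correction: the operator bound on $\Wc V_{m-1}$ should be drawn from Proposition~\ref{prop:L1MapMultipleLayersCont}, which needs only the decay and H\"older conditions assumed in the theorem, rather than from Proposition~\ref{prop:L1MapMultipleLayersInversion}, whose exact norm identity requires the Littlewood--Paley admissibility condition that is not part of the hypotheses here.
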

\begin{proof}
The proof is only provided for the continuous case. We have the following bound for some $C_m$:
\begin{align*}
\norm{S_{\text{cont},2}^mf - S_{\text{cont},2}^mL_\tau f}_{\Lb^2(\mathbb{R}_+^m)} &\leq \|A_2M\Wc V_{m-1}f - A_2ML_\tau \Wc V_{m-1}f \|_{\Lb^2 (\R_+^m)} + \|A_2M[\Wc V_{m-1}, L_\tau]f\|_{\Lb^2 (\R_+^m)} \\
& \leq \|A_2M\Wc V_{m-1}f - A_2ML_\tau \Wc V_{m-1}f \|_{\Lb^2 (\R_+^m)} + C_m^2 \norm{[\Wc,L_\tau]}_{\Lb^2(\R_{+}^m \times \R^n) \to  \Lb^2(\R^n)}^2\|f\|_2^2.
\end{align*} 
For the first term, we can mimic the dilation argument to get
$$|A_2M\Wc V_{m-1}f - A_2ML_\tau \Wc V_{m-1}f| = \left| \|g\|_2 - \|L_\tau g\|_2\right|.$$
The difference is the term with the diffeomorphism. Let $y = \gamma(x) = x - \tau(x)$. Then it follows that $\gamma^{-1}(y) = x$ and change of variables implies that
\begin{align*}
\norm{L_\tau f}_2^2 = \int_{\mathbb{R}^n} |f(x-\tau(x))|^2\ dx =  \int_{\mathbb{R}^n} |f(y)|^2 \, \frac{dy}{|\det(I-D\tau(\gamma^{-1}(y)))|}.
\end{align*}
We also have
$$1 - n\|D\tau\|_\infty \leq |\det(I-D\tau(\gamma^{-1}(y)))| \leq 1 + n\|D\tau\|_\infty.$$
Thus, we obtain
\begin{align*}
\frac{1}{1+n\norm{D\tau}_{\infty}}\int_{\mathbb{R}^n} |f(y)|^2 \ dy \leq \norm{L_\tau f}_2^2 &\leq \frac{1}{1-n\norm{D\tau}_{\infty}}\int_{\mathbb{R}^n} |f(y)|^2 \ dy, \\
\frac{1}{1+n\norm{D\tau}_{\infty}}\norm{f}_2^2 \leq \norm{L_\tau f}_2^2 &\leq \frac{1}{1-n\norm{D\tau}_{\infty}}\norm{f}_2^2.
\end{align*}
Since we have a bound on $\|D\tau\|_\infty$, we see that 
$$\frac{1}{1+n\norm{D\tau}_{\infty}} = \frac{1- n\|D\tau\|_\infty}{1-n^2 \|D\tau\|_\infty^2} \geq 1- n\|D\tau\|_\infty$$
since $1 > 1-n^2 \|D\tau\|_\infty^2 > 0.$ Similarly, 
$$\frac{1}{1-n\norm{D\tau}_{\infty}} = \frac{1+ 2n\|D\tau\|_\infty}{1 + n \|D\tau\|_\infty -2n^2 \|D\tau\|_\infty^2}$$
and
$$1 + n \|D\tau\|_\infty -2n^2 \|D\tau\|_\infty^2  \geq 1 + n \|D\tau\|_\infty -\frac{2n^2}{2n} \|D\tau\|_\infty = 1$$
since $\|D\tau\|_\infty \leq \tfrac{1}{2n}$. It follows that $\frac{1}{1-n\norm{D\tau}_{\infty}} \leq 1+ 2n\|D\tau\|_\infty$ and
$$(1 - n\|D\tau\|_\infty)^{1/2}\norm{f}_2 \leq \norm{L_\tau f}_2 \leq (1+ 2n\|D\tau\|_\infty)^{1/2}\norm{f}_2.$$
Since $1 - n\|D\tau\|_\infty < 1$ and $1+ 2n\|D\tau\|_\infty > 1$, Use the lower bound on $\norm{L_\tau f}_2$ to get
\begin{align*}
\|f\|_2 - \norm{L_\tau f}_2 & = \|f\|_2 \left(1 - (1 - n\|D\tau\|_\infty)^{1/2}\right) \\
                            &\leq \|f\|_2 \left(1 - (1 - n\|D\tau\|_\infty)\right) \\
                            &= n\|D\tau\|_\infty \|f\|_2.
\end{align*} and the upper bound to get
\begin{align*}
 \norm{L_\tau f}_2 - \|f\|_2 & = \|f\|_2 \left((1 + 2n\|D\tau\|_\infty)^{1/2} - 1\right) \\
                            &\leq \|f\|_2 \left((1 + 2n\|D\tau\|_\infty) - 1\right) \\
                            &= 2n\|D\tau\|_\infty \|f\|_2.
\end{align*} 
Finally, we have
$$\left|\|f\|_2 - \norm{L_\tau f}_2\right| \leq 2n\|D\tau\|_\infty\|f\|_2$$
for any $f \in {\Lb}^2(\mathbb{R}^n).$ Now we mimic the argument given for dilation stability to get 
$$\|A_2M\Wc V_{m-1}f - A_2ML_\tau \Wc V_{m-1}f\|_{\Lb^2 (\R_+^m)}^2 \leq C\|D\tau\|_\infty^2 \|f\|_2^2$$
for some constant $C.$ For the second term, we have
$$C_m^2 \norm{[\Wc,L_\tau]}_{\Lb^2(\R_{+}^m \times \R^n) \to  \Lb^2(\R^n)}^2\|f\|_2^2 \leq  C'\left(\norm{D\tau}_{\infty}\left(\log\frac{\norm{\Delta\tau}_\infty}{\norm{D\tau}_\infty} \vee 1\right)+\norm{D^2\tau}_\infty\right)^2 \|f\|_2^2$$
for some constant $C'$. We now choose $C_{n,m} = \max\{C', C\}$ to get the desired bound.
\end{proof}

\subsection{Stability to Diffeomorphisms When $1 < q < 2$}
\begin{lemma} \label{lemma: 2 norm defined} 
Let $\gamma(z)  = z - \tau(z)$, $g(z) = f(\gamma(z))$, and
$$K_\lambda(x,z) = \det(D\gamma(z)) \psi_\lambda(\gamma(x)- \gamma(z)) - \psi_\lambda(x-z).$$
Additionally, define
$$T_\lambda g(x) = \int_{\mathbb{R}^n} g(z) K_\lambda(x,z)\, dz$$
and consider $Tg: \mathbb{R}^n \to \Lb^2(\mathbb{R}_{+}, \tfrac{d\lambda}{\lambda^{n+1}})$ defined by $Tg(x) = (T_\lambda g(x))_{\lambda \in \mathbb{R}_{+}}.$ Then for the Banach space $\mathcal{X} =  \Lb^2(\mathbb{R}_+, \tfrac{d\lambda}{\lambda^{n+1}})$,
$$\|Tg\|_{\Lb^2_\mathcal{X}(\mathbb{R}^n)}^2 \leq  C_{n,m} \left(\|D\tau\|_\infty \left(\log\frac{\|\Delta \tau\|_\infty}{\|D\tau\|_\infty}\vee 1\right) + \|D^2\tau\|_\infty \right)^2 \|f\|_2$$
for some constant $C_{n,m} > 0$.
\end{lemma}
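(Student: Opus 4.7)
The plan is to recognize that $T_\lambda g(x)$ is, up to a sign, precisely the commutator $[\Wc_\lambda, L_\tau]f(x)$, and then to invoke the operator bound of Corollary \ref{commutator bound} to conclude. Writing out the kernel $K_\lambda$ and using $g(z) = L_\tau f(z) = f(\gamma(z))$, I would split the integral defining $T_\lambda g(x)$ into two pieces and apply the change of variables $y = \gamma(z)$ to the first. Since $\|D\tau\|_\infty \leq 1/(2n)$ ensures that $\gamma$ is a $C^2$-diffeomorphism with positive Jacobian $\det(D\gamma(z))$, this substitution converts the first piece into
\begin{equation*}
\int_{\R^n} f(\gamma(z))\det(D\gamma(z))\psi_\lambda(\gamma(x)-\gamma(z))\,dz = \int_{\R^n} f(y)\psi_\lambda(\gamma(x)-y)\,dy = L_\tau(\Wc_\lambda f)(x),
\end{equation*}
while the second piece is immediately $(\psi_\lambda \ast L_\tau f)(x) = \Wc_\lambda L_\tau f(x)$. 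Subtracting gives $T_\lambda g(x) = L_\tau \Wc_\lambda f(x) - \Wc_\lambda L_\tau f(x) = -[\Wc_\lambda, L_\tau]f(x)$, and in particular $|T_\lambda g(x)| = |[\Wc_\lambda, L_\tau]f(x)|$.

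With this identification in hand, I would apply Fubini together with the definition of the Banach-valued norm on $\mathcal{X} = \Lb^2(\R_+, d\lambda/\lambda^{n+1})$ to write
\begin{equation*}
\|Tg\|_{\Lb_\mathcal{X}^2(\R^n)}^2 = \int_{\R^n}\int_0^\infty |[\Wc_\lambda,L_\tau]f(x)|^2\,\frac{d\lambda}{\lambda^{n+1}}\,dx = \int_0^\infty \|[\Wc_\lambda,L_\tau]f\|_2^2\,\frac{d\lambda}{\lambda^{n+1}} = \|[\Wc,L_\tau]f\|_2^2,
\end{equation*}
which matches the quantity controlled in Proposition \ref{lem:Cont214}. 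Substituting the operator-norm bound from Corollary \ref{commutator bound} then gives
\begin{equation*}
\|Tg\|_{\Lb_\mathcal{X}^2(\R^n)}^2 \leq \tilde{C}_n^2 \left(\|D\tau\|_\infty\!\left(\log\tfrac{\|\Delta\tau\|_\infty}{\|D\tau\|_\infty}\vee 1\right) + \|D^2\tau\|_\infty\right)^2 \|f\|_2^2,
\end{equation*}
from which the stated estimate follows after absorbing one factor of the small quantity into the constant $C_{n,m}$.

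The plan therefore reduces the lemma entirely to previously established results: the hard analytic work, namely the Cotlar-type dyadic decomposition that controls $\|[\Wc,L_\tau]\|$, has already been carried out in Proposition \ref{lem:Cont214}. The only point requiring care is the justification of the change of variables, which depends on $\gamma$ being a $C^2$-diffeomorphism with uniformly bounded inverse Jacobian; this is guaranteed by the hypothesis $\|D\tau\|_\infty \leq 1/(2n)$, which keeps $I - D\tau$ uniformly invertible and its determinant bounded away from zero. Everything else is bookkeeping in converting between the operator-valued $\Lb^2$ formulation and the scalar commutator norm used in Proposition \ref{lem:Cont214}.
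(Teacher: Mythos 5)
Your proposal is correct and follows essentially the same route as the paper: both identify $T_\lambda g(x)$ with $\pm[\Wc_\lambda,L_\tau]f(x)$ via the change of variables $y=\gamma(z)$, reduce $\|Tg\|_{\Lb^2_{\mathcal{X}}(\R^n)}^2$ to $\|[\Wc,L_\tau]f\|_2^2$ by Fubini, and invoke the $q=2$ commutator bound of Proposition \ref{lem:Cont214}. Your remark that the operator-norm bound naturally yields the \emph{square} of the parenthesized quantity is a fair observation about the paper's stated (first-power) estimate, and your handling of it is at least as careful as the paper's own final step.
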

\begin{proof}
Notice that 
\begin{align*}
\|Tg\|_{\Lb^2_X(\mathbb{R}^n)}^2 &= \int_{\mathbb{R}^n}\int_{0}^\infty |T_\lambda g(x)|^2 \frac{d \lambda}{\lambda^{n+1}} dx\\
&= \int_{\mathbb{R}^n}\int_{0}^\infty \left|\int_{\mathbb{R}^n} K_\lambda(x,z) g(z)\, dz\right|^2 \frac{d \lambda}{\lambda^{n+1}} dx\\
&= \int_{\mathbb{R}^n}\int_{0}^\infty \left|\int_{\mathbb{R}^n} f(\gamma(z)) [\det(D\gamma(z)) \psi_\lambda(\gamma(x) - \gamma(z)) - \psi_\lambda(x-z)]\, dz\right|^2 \frac{d \lambda}{\lambda^{n+1}} dx \\
 &= \int_{\mathbb{R}^n}\int_{0}^\infty
 \left|\int_{\mathbb{R}^n} \det(D\gamma(z))f(\gamma(z))\psi_\lambda(\gamma(x) - \gamma(z)) \, dz  -\int_{\mathbb{R}^n}f(\gamma(z)) \psi_\lambda(x-z)\, dz\right|^2 \frac{d \lambda}{\lambda^{n+1}} dx.
\end{align*}

Using the change of variables $u = \gamma(z)$, we get
\begin{align*}
\|Tg\|_{L^2_X(\mathbb{R}^n)}^2 &= \int_{\mathbb{R}^n}\int_{0}^\infty \left|L_\tau(f \ast \psi_\lambda)(x)- (L_\tau f \ast \psi_\lambda)(x)\right|^2 \frac{d \lambda}{\lambda^{n+1}} \, dx\\
&= \int_{\mathbb{R}^n}\int_{0}^\infty \left|[\Wc_\lambda, L_\tau] f(x)\right|^2 \frac{d \lambda}{\lambda^{n+1}} \, dx\\
&= \int_{0}^\infty  \int_{\mathbb{R}^n}\left|[\Wc_\lambda, L_\tau] f(x)\right|^2\, dx \, \frac{d \lambda}{\lambda^{n+1}}\\
&= \int_{0}^\infty  \left\|[\Wc_\lambda, L_\tau] f\right\|^2_2\, \frac{d \lambda}{\lambda^{n+1}}\\
&= \norm{[\Wc,L_\tau]f}_{\Lb^2(\R_{+} \times \R^n)}^2\\
& \leq C_{n,m} \left(\|D\tau\|_\infty \left(\log\frac{\|\Delta \tau\|_\infty}{\|D\tau\|_\infty} \vee 1\right) + \|D^2\tau\|_\infty \right) \|f\|_2^2,
\end{align*} where the last inequality follows from the $q = 2$ case.
\end{proof}

\begin{lemma} [\cite{vectorvaluedinequalities}, Marcinkiewicz Interpolation] 
Let $\mathcal{A}$ and $\mathcal{B}$ be Banach spaces and let $T:\mathcal{A} \to \mathcal{B}$ be a quasilinear operator defined on $\Lb^{p_0}_\mathcal{A}(\mathbb{R}^n)$ and $\Lb^{p_1}_\mathcal{A}(\mathbb{R}^n)$ with $0 < p_0 < p_1$. Furthermore, if $T$ satisfies
$$\|Tf\|_{\Lb^{p_i,\infty}_{\mathcal{B}}(\mathbb{R}^n)} \leq M_i\|f\|_{\Lb_\mathcal{A}^{p_i}(\mathbb{R}^n)}$$ for $i = 0, 1$, then for all $p \in (p_0, p_1)$,
$$\|Tf\|_{\Lb^p_\mathcal{B}(\mathbb{R}^n)} \leq N_p \|f\|_{\Lb_\mathcal{A}^{p}(\mathbb{R}^n)},$$
where $N_p$ only depends on $M_0$, $M_1$, and $p$.
\end{lemma}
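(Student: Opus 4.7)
The plan is to adapt the classical scalar-valued proof of Marcinkiewicz interpolation to the Banach-space-valued setting. The essential observation is that the proof relies only on the layer-cake representation of $\Lb^p$ norms and the triangle inequality applied in $\mathcal{B}$, both of which transfer verbatim from the scalar case. I will treat the case $p_0 < p_1 < \infty$ first, and note that $p_1 = \infty$ is handled by a minor modification.

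First, fix $\alpha > 0$ and a threshold $\delta = \delta(\alpha) > 0$ to be chosen later, and split $f = f_0 + f_1$ where
\begin{equation*}
f_0(x) = f(x) \mathbf{1}_{\{\|f(x)\|_\mathcal{A} \leq \delta\}}, \qquad f_1(x) = f(x) \mathbf{1}_{\{\|f(x)\|_\mathcal{A} > \delta\}}.
\end{equation*}
A layer-cake computation shows that $f_1 \in \Lb^{p_0}_\mathcal{A}(\mathbb{R}^n)$ and $f_0 \in \Lb^{p_1}_\mathcal{A}(\mathbb{R}^n)$, so that $Tf_0$ and $Tf_1$ are both well-defined and, by sublinearity, $\|Tf(x)\|_\mathcal{B} \leq \|Tf_0(x)\|_\mathcal{B} + \|Tf_1(x)\|_\mathcal{B}$. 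Consequently,
\begin{equation*}
\{x : \|Tf(x)\|_\mathcal{B} > \alpha\} \subseteq \{x : \|Tf_0(x)\|_\mathcal{B} > \alpha/2\} \cup \{x : \|Tf_1(x)\|_\mathcal{B} > \alpha/2\},
\end{equation*}
and applying the weak-type hypotheses gives
\begin{equation*}
m\bigl(\{x : \|Tf(x)\|_\mathcal{B} > \alpha\}\bigr) \leq \left(\frac{2 M_1}{\alpha}\right)^{p_1} \|f_0\|_{\Lb^{p_1}_\mathcal{A}}^{p_1} + \left(\frac{2 M_0}{\alpha}\right)^{p_0} \|f_1\|_{\Lb^{p_0}_\mathcal{A}}^{p_0}.
\end{equation*}

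Next, invoking the layer-cake formula
\begin{equation*}
\|Tf\|_{\Lb^p_\mathcal{B}}^p = p \int_0^\infty \alpha^{p-1} m\bigl(\{x : \|Tf(x)\|_\mathcal{B} > \alpha\}\bigr) d\alpha,
\end{equation*}
I substitute the bound above, then pick the threshold $\delta = \alpha$ (or any $\delta$ that is a linear function of $\alpha$) and apply Fubini's theorem to exchange the order of integration between the $\alpha$ integral and the spatial integrals hidden in $\|f_0\|_{p_1}^{p_1}$ and $\|f_1\|_{p_0}^{p_0}$. The inner integrals over $\alpha$ evaluate to explicit constants multiplying $\|f(x)\|_\mathcal{A}^p$, using the identities
\begin{equation*}
\int_0^{\|f(x)\|_\mathcal{A}} \alpha^{p - 1 - p_0} d\alpha = \frac{\|f(x)\|_\mathcal{A}^{p - p_0}}{p - p_0}, \qquad \int_{\|f(x)\|_\mathcal{A}}^\infty \alpha^{p - 1 - p_1} d\alpha = \frac{\|f(x)\|_\mathcal{A}^{p - p_1}}{p_1 - p},
\end{equation*}
both of which converge precisely because $p_0 < p < p_1$. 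Collecting the constants yields $\|Tf\|_{\Lb^p_\mathcal{B}} \leq N_p \|f\|_{\Lb^p_\mathcal{A}}$ with $N_p$ depending only on $p, p_0, p_1, M_0, M_1$.

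The main obstacle is essentially bookkeeping: carefully verifying that the sublinearity of $T$, combined with its separate definitions on $\Lb^{p_0}_\mathcal{A}$ and $\Lb^{p_1}_\mathcal{A}$, gives a consistent value $Tf$ on $\Lb^p_\mathcal{A}$ via the decomposition $f = f_0 + f_1$, and choosing $\delta(\alpha)$ so that the resulting $\alpha$-integrals converge and the constants are sharp. For the case $p_1 = \infty$, the threshold must be chosen as $\delta = \alpha/(2M_1)$ so that $\|Tf_1\|_{\mathcal{B}} \leq \alpha/2$ pointwise (hence $\{x : \|Tf_1(x)\|_\mathcal{B} > \alpha/2\}$ is empty), reducing the estimate to the weak-type $(p_0,p_0)$ bound on $Tf_0$; the same layer-cake integration then yields the claim with constant depending only on $M_0, M_1, p, p_0$.
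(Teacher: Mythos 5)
The paper does not prove this lemma; it is quoted verbatim from the cited reference \cite{vectorvaluedinequalities}, so there is no in-paper argument to compare against. Your proof is the standard Marcinkiewicz argument transported to the Banach-valued setting, and it is essentially correct: the splitting $f = f_0 + f_1$ at height $\delta(\alpha)$, the inclusion of superlevel sets via pointwise sublinearity $\|T(f_0+f_1)(x)\|_{\mathcal{B}} \leq \|Tf_0(x)\|_{\mathcal{B}} + \|Tf_1(x)\|_{\mathcal{B}}$, the two weak-type bounds, and the Fubini/layer-cake computation all go through exactly as in the scalar case because only the norms $\|f(x)\|_{\mathcal{A}}$ and $\|Tf(x)\|_{\mathcal{B}}$ ever enter. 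Two small points. First, in your $p_1 = \infty$ paragraph you have swapped the roles of $f_0$ and $f_1$: with your definitions it is $f_0$ (the part with $\|f(x)\|_{\mathcal{A}} \leq \delta$) that lands in $\Lb^{\infty}_{\mathcal{A}}$, so the choice $\delta = \alpha/(2M_1)$ forces $\|Tf_0\|_{\mathcal{B}} \leq \alpha/2$ a.e., and the weak $(p_0,p_0)$ bound is then applied to $Tf_1$, not the other way around. Second, the choice $\delta = \alpha$ yields a constant of the form $N_p^p = \tfrac{p(2M_0)^{p_0}}{p-p_0} + \tfrac{p(2M_1)^{p_1}}{p_1-p}$, which proves the lemma as stated; to recover the multiplicative form $N_p = \eta M_0^{\delta}M_1^{1-\delta}$ asserted in the paper's subsequent remark one must instead take $\delta(\alpha)$ proportional to $\alpha$ with a proportionality constant tuned to the ratio of $M_0$ and $M_1$, which your parenthetical ``any $\delta$ that is a linear function of $\alpha$'' accommodates but does not carry out.
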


\begin{remark}
Like with the scalar valued estimate, it can be shown that $N_p = \eta M_0^\delta M_1^{1- \delta}$, where 
$$
\delta = 
\begin{dcases}
\frac{p_0(p_1-p)}{p(p_1-p_0)} &\quad p_1 < \infty, \\
\frac{p_0}{p} &\quad  p_1 = \infty
\end{dcases}
$$
and 
$$
\eta = 
\begin{dcases}
2\left(\frac{p(p_1-p_0)}{(p-p_0)(p_1-p)}\right)^{1/p} &\quad p_1 < \infty, \\
2\left(\frac{p_0}{p-p_0}\right)^{1/p} &\quad  p_1 =  \infty.
\end{dcases}
$$ 
\end{remark}

\begin{lemma} \label{lemma: interpolation}
Let $T$ be the operator defined in Lemma \ref{lemma: 2 norm defined}. Let $q \in (1,2)$ and $r \in (1,q)$. Then $T$ satisfies 
$$\|Tg\|_{\Lb^{r,\infty}_{\mathcal{X}}(\mathbb{R}^n)} \leq M_r\|f\|_{\Lb^{r}(\mathbb{R}^n)}$$
for some constant $M_r>0$, which is independent of $\|D\tau\|_\infty$ and $\|D^2 \tau\|_\infty$. Furthermore, $T$ also satisfies 
$$\|Tg\|_{\Lb^{2,\infty}_{\mathcal{X}}(\mathbb{R}^n)}^2 \leq \tilde{C}_{n} \left(\|D\tau\|_\infty \left(\log\frac{\|\Delta \tau\|_\infty}{\|D\tau\|_\infty} \vee 1\right) + \|D^2\tau\|_\infty \right)^2 \|f\|_{\Lb^{2}(\mathbb{R}^n)}^2$$ 
for some constant $\tilde{C}_{n} > 0$.
\end{lemma}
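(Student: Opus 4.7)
The plan is to prove the two weak-type bounds separately, with the second being an essentially immediate consequence of the previous lemma, and the first following from direct strong-type estimates via the Littlewood--Paley $G$-function theory already developed in the paper.

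For the weak $(2,2)$ estimate, the plan is to invoke Lemma \ref{lemma: 2 norm defined}, which already gives
$$\|Tg\|_{\Lb^2_\mathcal{X}(\mathbb{R}^n)}^2 \leq C_{n,m} \left(\|D\tau\|_\infty \left(\log\frac{\|\Delta \tau\|_\infty}{\|D\tau\|_\infty}\vee 1\right) + \|D^2\tau\|_\infty \right) \|f\|_2^2,$$
and then use the general inequality $\|h\|_{\Lb^{p,\infty}_{\mathcal{B}}(\mathbb{R}^n)} \leq \|h\|_{\Lb^p_{\mathcal{B}}(\mathbb{R}^n)}$ recorded in Section \ref{sec: notation and basic properties}. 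This yields the claimed weak-type bound with constant $\tilde{C}_n$ equal to a constant multiple of the one appearing in Lemma \ref{lemma: 2 norm defined}, which is of the right form.

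For the weak $(r,r)$ estimate, I would in fact establish the stronger statement that $T$ is of strong type $(r,r)$ with a constant $M_r$ that is independent of $\|D\tau\|_\infty$ and $\|D^2\tau\|_\infty$ (subject only to $\|D\tau\|_\infty \leq 1/(2n)$). The idea is to decompose pointwise via the triangle inequality in $\mathcal{X}$: since the calculation in the proof of Lemma \ref{lemma: 2 norm defined} identifies $T_\lambda g(x) = L_\tau(\Wc_\lambda f)(x) - \Wc_\lambda(L_\tau f)(x)$, one obtains
$$\|T g(x)\|_\mathcal{X} \leq \left(\int_0^\infty |(f \ast \psi_\lambda)(\gamma(x))|^2\, \frac{d\lambda}{\lambda^{n+1}}\right)^{1/2} + \left(\int_0^\infty |(L_\tau f \ast \psi_\lambda)(x)|^2\, \frac{d\lambda}{\lambda^{n+1}}\right)^{1/2} = G(f)(\gamma(x)) + G(L_\tau f)(x).$$
For the first term, a change of variables $y = \gamma(x)$ combined with the lower bound $|\det D\gamma(x)| \geq 1 - n\|D\tau\|_\infty \geq 1/2$ converts the integral to $\int |G(f)(y)|^r dy$ up to a factor of $2$. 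For the second term, one applies the analogous change of variables after invoking the $\Lb^r$-boundedness of $G$. Both terms are then bounded via Lemma \ref{thm: LP g-function Lp bounded} to give $\|Tg\|_{\Lb^r_\mathcal{X}(\mathbb{R}^n)} \leq M_r \|f\|_r$, from which the weak-type estimate follows.

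The main (very mild) obstacle is verifying that the $\tau$-dependence in $M_r$ really is harmless; the key point, used twice, is that the hypothesis $\|D\tau\|_\infty \leq 1/(2n)$ keeps the Jacobian factors from the change of variables bounded above and below by universal constants, so $M_r$ depends on $n$ and $r$ only. Once this is in hand, the conclusion of the lemma follows directly, setting the stage for a Marcinkiewicz interpolation between $(r,r)$ and $(2,2)$ in which only the $(2,2)$ bound carries the smallness in $\tau$, yielding the desired $(q,q)$ estimate with a fractional power of the commutator size.
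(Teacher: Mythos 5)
Your proposal is correct and follows essentially the same route as the paper: the weak $(2,2)$ bound is deduced from the strong $\Lb^2$ bound of Lemma \ref{lemma: 2 norm defined} together with $\|\cdot\|_{\Lb^{2,\infty}_{\mathcal{X}}}\leq\|\cdot\|_{\Lb^{2}_{\mathcal{X}}}$, and the $(r,r)$ bound rests on the same pointwise domination $\|Tg(x)\|_{\mathcal{X}}\lesssim G(f)(\gamma(x))+G(L_\tau f)(x)$, the Jacobian bounds from $\|D\tau\|_\infty\leq 1/(2n)$, and Lemma \ref{thm: LP g-function Lp bounded}. The only cosmetic difference is that you prove the strong $(r,r)$ estimate and then pass to the weak norm, whereas the paper applies Chebyshev's inequality directly to the same majorant; the underlying computation is identical.
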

\begin{proof} The second inequality obviously follows from strong boundedness of the operator, so we will omit the proof. For the first inequality, the norm satisfies 
\begin{align*}
\|Tg(x)\|_{\mathcal{X}}^2  &= \int_{0}^\infty
 \left|\int_{\mathbb{R}^n} \det(D\gamma(z))f(\gamma(z))\psi_\lambda(\gamma(x) - \gamma(z)) \, dz  -\int_{\mathbb{R}^n}f(\gamma(z)) \psi_\lambda(x-z)\, dz\right|^2 \frac{d \lambda}{\lambda^{n+1}}\\
 &= \int_{0}^\infty
 \left|\int_{\mathbb{R}^n} f(z)\psi_\lambda(\gamma(x) - z) \, dz  -\int_{\mathbb{R}^n}f(\gamma(z)) \psi_\lambda(x-z)\, dz\right|^2 \frac{d \lambda}{\lambda^{n+1}}\\
 & \leq 4 \int_{0}^\infty
 \left|\int_{\mathbb{R}^n} f(z)\psi_\lambda(\gamma(x) - z) \, dz\right|^2\frac{d \lambda}{\lambda^2}  + 4\int_{0}^\infty\left|\int_{\mathbb{R}^n}f(\gamma(z)) \psi_\lambda(x-z)\, dz\right|^2\frac{d \lambda}{\lambda^{n+1}}\\
 &= 4|(Gf)(\gamma(x))|^2 + 4|GL_\tau f(x)|^2.
\end{align*}  

We see 
$$\|Tg(x)\|_{\mathcal{X}} \leq \sqrt{4|(Gf)(\gamma(x))|^2 + 4|GL_\tau f(x)|^2} \leq 2|(Gf)(\gamma(x))| + 2|GL_\tau f(x)|.$$
For $\delta > 0$, Chebyshev's inequality implies that there exists $A_r$ such that
\begin{align*}
m\{\|Tg(x)\|_{\mathcal{X}} > \delta\} &\leq m\{2|(Gf)(\gamma(x))| + 2|GL_\tau f(x)| > \delta\} \\
& \leq \frac{A_r}{\delta^{r}} (\|(Gf)(\gamma(\cdot))\|_{\Lb^r(\mathbb{R}^n)}^r + \|GL_\tau f\|_{\Lb^r(\mathbb{R}^n)}^r).
\end{align*}
We want to now ensure that
$\|(Gf)(\gamma(\cdot))\|_{\Lb^r(\mathbb{R}^n)}^r$ can be bounded above by a constant multiple of $\|Gf\|_{\Lb^r(\mathbb{R}^n)}^r.$ Since $\gamma$ is a diffeomorphism, we can use change of variables to get 
\begin{align*}
\|(Gf)(\gamma(\cdot))\|_{\Lb^r(\mathbb{R}^n)}^r &= \int_{\mathbb{R}^n} |Gf(\gamma(x))|^r \, dx \\
&= \int_{\mathbb{R}^n} |Gf(u)|^r \, \frac{du}{\det\left[(D\gamma)(\gamma^{-1}(u))\right]} \\
&\leq  2 \int_{\mathbb{R}^n} |Gf(x)|^r  \, dx\\
&=  2 \|Gf\|_{\Lb^r(\mathbb{R}^n)}^r.
\end{align*}
By Theorem \ref{thm: LP g-function Lp bounded}, we get
\begin{align*}
    \|GL_\tau f\|_{\Lb^r(\mathbb{R}^n)}^r \leq C_r\|L_\tau f\|_{\Lb^r(\mathbb{R}^n)}^r \leq 2C_r \|f\|_{\Lb^r(\mathbb{R}^n)}^r
\end{align*} for some constant $C_r$ dependent on $r$.
Thus, we have 
$$m\{\|Tg(x)\|_{\mathcal{X}} > \delta\}^{1/r} \leq \frac{M_r}{\delta} \|f\|_{\Lb^r(\mathbb{R}^n)}$$
for some constant $M_r > 0$.
\end{proof}

\begin{lemma}\label{lemma: interpolate} Fix $r = \frac{1+q}{2}$ so that $r \in (1,q)$. For some constant $C_{n,q} > 0$, the operator $T$ defined in Lemma \ref{lemma: 2 norm defined} satisfies the estimate 
$$\|Tg\|_{\Lb^q_\mathcal{X}(\mathbb{R}^n)}^q \leq C_{n,q} \eta^q M_r^{q\delta}  \left(\|D\tau\|_\infty \left(\log\frac{\|\Delta \tau\|_\infty}{\|D\tau\|_\infty} \vee 1\right) + \|D^2\tau\|_\infty \right)^{q(1-\delta)} \|f\|_q^q,$$
where $\eta$ and $\delta$ come from interpolation, and $M_r$ comes from the constant for weak boundedness in Lemma \ref{lemma: interpolation}.
\end{lemma}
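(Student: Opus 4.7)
The plan is to recognize the statement as a direct application of the vector-valued Marcinkiewicz interpolation theorem (the preceding Lemma and Remark) to the operator $T$, once two endpoint estimates are in place. Since $q\in(1,2)$ and $r=(1+q)/2$, we have $r\in(1,q)$ and in particular $q\in(r,2)$, so $q$ lies strictly between the two endpoint exponents, which is precisely what interpolation requires.

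First I would collect the two endpoint weak-type bounds already established in Lemma \ref{lemma: interpolation}. Setting $\mathcal{A}=\mathbb{C}$ and $\mathcal{B}=\mathcal{X}=\Lb^2(\R_+,d\lambda/\lambda^{n+1})$, the operator $T$ is linear (hence sublinear) from scalar-valued $\Lb^{p}(\R^n)$ into $\mathcal{X}$-valued Lebesgue spaces, and Lemma \ref{lemma: interpolation} gives
\begin{align*}
\|Tg\|_{\Lb^{r,\infty}_{\mathcal{X}}(\R^n)} &\leq M_r\|f\|_{\Lb^r(\R^n)}, \\
\|Tg\|_{\Lb^{2,\infty}_{\mathcal{X}}(\R^n)} &\leq \sqrt{\tilde{C}_n}\left(\|D\tau\|_\infty\left(\log\tfrac{\|\Delta\tau\|_\infty}{\|D\tau\|_\infty}\vee 1\right)+\|D^2\tau\|_\infty\right)\|f\|_{\Lb^2(\R^n)},
\end{align*}
where the first bound is independent of $\tau$ and the second carries all of the diffeomorphism dependence.

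Next I would invoke the Marcinkiewicz interpolation lemma with $p_0=r$, $p_1=2$, $p=q$, together with the explicit formula for the constant from the Remark, namely $N_q=\eta\,M_0^{\delta}M_1^{1-\delta}$ with $M_0=M_r$ and $M_1=\sqrt{\tilde{C}_n}\bigl(\|D\tau\|_\infty(\log\tfrac{\|\Delta\tau\|_\infty}{\|D\tau\|_\infty}\vee 1)+\|D^2\tau\|_\infty\bigr)$, and with $\delta=\tfrac{r(2-q)}{q(2-r)}$ and $\eta=2\bigl(\tfrac{q(2-r)}{(q-r)(2-q)}\bigr)^{1/q}$. This yields
\begin{equation*}
\|Tg\|_{\Lb^q_{\mathcal{X}}(\R^n)} \leq \eta\,M_r^{\delta}\,M_1^{1-\delta}\,\|f\|_{\Lb^q(\R^n)}.
\end{equation*}

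Finally I would raise both sides to the $q$-th power and absorb the factor $\tilde{C}_n^{q(1-\delta)/2}$ that comes from $M_1^{q(1-\delta)}$ into a single constant $C_{n,q}:=\tilde{C}_n^{q(1-\delta)/2}$, producing the stated inequality. The only place any actual work is needed is to verify that the hypotheses of the vector-valued Marcinkiewicz theorem are met for our Banach space $\mathcal{X}$ and for $T$; since $\mathcal{X}$ is a Hilbert space and Lemma \ref{lemma: interpolation} supplies the two weak-type endpoint estimates with the correct range of exponents, this is essentially immediate and is the step I expect to take the most bookkeeping rather than any substantive difficulty.
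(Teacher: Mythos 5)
Your proposal is correct and follows essentially the same route as the paper: both apply the vector-valued Marcinkiewicz lemma with $p_0=r$, $p_1=2$, $p=q$ to the weak-type endpoint bounds of Lemma \ref{lemma: interpolation} and then absorb the constants. The only point the paper adds that you should too is the comparability $\|g\|_r \leq 2\|f\|_r \leq 4\|g\|_r$ (from the change of variables $u=\gamma(z)$ and the bound on $\det(I-D\tau)$), which is needed because the endpoint estimates are stated in terms of $\|f\|$ while the interpolation theorem requires bounds in terms of the input $g$; incidentally, your explicit value $\delta = \tfrac{r(2-q)}{q(2-r)} = \tfrac{(1+q)(2-q)}{q(3-q)}$ is the one that actually follows from the Remark's formula, whereas the paper's claimed $\delta = \tfrac{1}{1+q}$ does not.
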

\begin{proof} Since $T$ is an integral operator, it is clear that is quasilinear. Using the $\Lb^r(\mathbb{R}^n)$ and $\Lb^2(\mathbb{R}^n)$ estimates from the previous Lemma, we interpolate using Marcinkiewicz since $\|g\|_r \leq 2 \|f\|_r \leq 4 \|g\|_r$. 
\end{proof}

\begin{theorem}
Let $1 < q < 2$. Assume $\psi$ and its first and second order derivatives have decay in $O((1+|x|)^{-n-3})$, and $\int_{\mathbb{R}^n} \psi(x)\ dx=0$. Then for every $\tau \in C^2(\mathbb{R}^n)$ with $\norm{D\tau}_{\infty} < \frac{1}{2n}$, there exists $C_{n,q} > 0$ such that 
$$\|S_{\text{cont},q}f - S_{\text{cont},q} L_\tau f \|_{\Lb^2(\R_{+})}^q \leq C_{n,q} \left[\|D\tau\|_\infty^q + \eta^q M_r^{q\delta}  \left(\|D\tau\|_\infty \left(\log\frac{\|\Delta \tau\|_\infty}{\|D\tau\|_\infty} \vee 1\right) + \|D^2\tau\|_\infty \right)^{q(1-\delta)} \right]\|f\|_q^q.$$
\end{theorem}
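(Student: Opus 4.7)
The plan is to mirror the two-term decomposition used in Theorem \ref{prop:genstabcont} for $q=2$. Writing $A_q h := \|h\|_q^q$, the triangle inequality gives
\begin{equation*}
\|S_{\text{cont},q} f - S_{\text{cont},q} L_\tau f\|_{\Lb^p(\R_+)}
\leq \|A_q M \Wc f - A_q M L_\tau \Wc f\|_{\Lb^p(\R_+)}
+ \|A_q M [\Wc, L_\tau] f\|_{\Lb^p(\R_+)},
\end{equation*}
so it suffices to bound these two pieces separately. The first piece measures the failure of the $\Lb^q$ norm to be invariant under $L_\tau$, while the second is the commutator of $\Wc$ and $L_\tau$ whose $\Lb^2$ bound was provided in Lemma \ref{lemma: 2 norm defined} and then bootstrapped to an $\Lb^q$-type bound via Marcinkiewicz interpolation in the preceding lemmas.

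For the diagonal term, I would first establish the pointwise estimate $\bigl|\|h\|_q^q - \|L_\tau h\|_q^q\bigr| \leq 2n \|D\tau\|_\infty \|h\|_q^q$ for every $h \in \Lb^q(\R^n)$, using the change of variables $u = \gamma(x) = x - \tau(x)$ together with $1 - n\|D\tau\|_\infty \leq |\det D\gamma| \leq 1 + n\|D\tau\|_\infty$ and the hypothesis $\|D\tau\|_\infty \leq 1/(2n)$, exactly as carried out in the proof of Theorem \ref{prop:genstabcont}. Specializing to $h = f \ast \psi_\lambda$, raising to the $p$-th power, integrating against $d\lambda/\lambda^{n+1}$, and invoking the one-layer $\Lb^p$ boundedness of $S_{\text{cont},q}$ from Theorem \ref{thm: m-layer Lq bounded norm} (with $m = 1$) would yield the contribution $C_n \|D\tau\|_\infty \|f\|_q^q$.

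For the commutator term, I would use $pq = 2$ to rewrite
\begin{equation*}
\|A_q M [\Wc, L_\tau] f\|_{\Lb^p(\R_+)} = \left(\int_0^\infty \|[\Wc_\lambda, L_\tau] f\|_q^{\,2} \, \frac{d\lambda}{\lambda^{n+1}}\right)^{q/2}.
\end{equation*}
Since $q \leq 2$, Minkowski's integral inequality in the form $\|F\|_{\Lb^2_\lambda \Lb^q_x} \leq \|F\|_{\Lb^q_x \Lb^2_\lambda}$ applied to $F(x,\lambda) = [\Wc_\lambda, L_\tau]f(x)$ produces
\begin{equation*}
\left(\int_0^\infty \|[\Wc_\lambda, L_\tau] f\|_q^{\,2} \, \frac{d\lambda}{\lambda^{n+1}}\right)^{1/2} \leq \|T g\|_{\Lb^q_{\mathcal{X}}(\R^n)},
\end{equation*}
where $g = L_\tau f$ and $T$, $\mathcal{X}$ are as in Lemma \ref{lemma: 2 norm defined}. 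Raising to the $q$-th power and plugging in the $\Lb^q_{\mathcal{X}}$ bound on $Tg$ from the interpolation lemma immediately preceding the theorem disposes of this piece; summing the two contributions and absorbing constants into a single $C_{n,q}$ delivers the stated bound.

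The main obstacle is coordinating the exponents so that Minkowski's inequality applies in the correct direction: the identity $qp = 2$ is precisely what makes $q/2 = 1/p$, which is what allows the $\Lb^p(\R_+)$ commutator norm to be controlled by $\|Tg\|^q_{\Lb^q_{\mathcal{X}}(\R^n)}$ and hence by the $\Lb^q$ interpolation bound. Without this numerology, the commutator estimate (which naturally lives in $\Lb^2_x$ via Proposition \ref{lem:Cont214} and was transferred to $\Lb^q_x$ only at the cost of raising to the power $q$) could not be fitted back into the scattering norm, and the argument would break down.
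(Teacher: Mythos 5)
Your proposal is correct and follows essentially the same route as the paper: the same splitting into a Jacobian (diagonal) term and a commutator term, the same change-of-variables bound $\bigl|\|h\|_q^q-\|L_\tau h\|_q^q\bigr|\le 2n\|D\tau\|_\infty\|h\|_q^q$ combined with one-layer boundedness for the first piece, and the same Minkowski-integral-inequality step (exploiting $qp=2$) to reduce the commutator piece to $\|Tg\|_{\Lb^q_{\mathcal{X}}(\R^n)}^q$ and the interpolation lemma. No gaps.
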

\begin{proof}
We use the same notation as Theorem \ref{dilate m layers wavelet}. Using a nearly identical argument to Corollary \ref{corollary: m layer dilation stability for q != 2}, we get
\begin{align*}
\|S_{\text{cont},q}f - S_{\text{cont},q} L_\tau f \|_{\Lb^2(\R_{+})} &= \| A_q M \Wc f - A_qM\Wc L_\tau f\|_{\Lb^2(\R_{+})} \\
&= \| A_q M \Wc f - A_qML_\tau \Wc f + A_qML_\tau Wf - A_qM\Wc L_\tau f\|_{\Lb^2(\R_{+})}\\
& \leq \| A_q M \Wc f - A_qML_\tau \Wc f\|_{\Lb^2(\R_{+})} + \|A_qML_\tau \Wc f - A_qM\Wc L_\tau f\|_{\Lb^2(\R_{+})}\\
&\leq \|(A_q M - A_qML_\tau)\Wc f \|_{\Lb^2(\R_{+})} + \|A_qM[\Wc, L_\tau]f\|_{\Lb^2(\R_{+})}.
\end{align*} 
The first term, $\|(A_q M - A_qML_\tau)\Wc f \|_{\Lb^2(\R_{+})}$, can be bounded using an argument identical to the $q = 2$ case. In particular, we can prove that
$$(1 - n\|D\tau\|_\infty) \norm{f}_q \leq (1 - n\|D\tau\|_\infty)^{1/q}\norm{f}_q \leq \norm{L_\tau f}_q$$
and
$$\norm{L_\tau f}_q \leq (1+ 2n\|D\tau\|_\infty)^{1/q}\norm{f}_q \leq (1+ 2n\|D\tau\|_\infty)\norm{f}_q,$$
which means
$$\|(A_q M - A_qML_\tau)\Wc f \|_{\Lb^2(\R_{+})}^q \leq C \|D\tau\|_\infty^q \norm{f}_q^q.$$
For the other term,
\begin{align*}
 \|A_qM[\Wc, L_\tau]f\|_{\Lb^2(\R_{+})}^q 
 &= \left(\int_0^\infty \left[\int_{\mathbb{R}^n}|(L_\tau f \ast \psi_\lambda)(x) - L_\tau(f \ast \psi_\lambda)(x)|^q\, dx\right]^{2/q} \frac{d \lambda}{\lambda^{n+1}}\right)^{q/2}.
 \end{align*}
 
Now, expand convolution and then use change of variables to get
\begin{align*}
 &\|A_qM[\Wc, L_\tau]f\|_{\Lb^2(\R_{+})}^q \\ &= \left(\int_0^\infty \left[\int_{\mathbb{R}^n}\left|\int_{\mathbb{R}^n} f(\gamma(z))(\det(D\gamma(z)) \psi_\lambda(\gamma(x) - \gamma(z)) - \psi_\lambda(x-z)) \, dz \right|^q\, dx\right]^{2/q} \frac{d \lambda}{\lambda^{n+1}}\right)^{q/2} \\
 &= \left(\int_0^\infty\left[\int_{\mathbb{R}^n}\left|\int_{\mathbb{R}^n}g(z)K_\lambda(x,z) \, dz \right|^q\, dx\right]^{2/q} \frac{d \lambda}{\lambda^{n+1}}\right)^{q/2} \\
 &= \left(\int_0^\infty\left[\int_{\mathbb{R}^n}\left|T_\lambda g(x) \right|^q\, dx\right]^{2/q}\frac{d \lambda}{\lambda^{n+1}}\right)^{q/2} \\
 & \leq \int_{\mathbb{R}^n}\left[\int_0^\infty\left|T_\lambda g(x) \right|^{q}\, \frac{d \lambda}{\lambda^{n+1}}\right]^{q/2}\, dx  \\
 &=\int_{\mathbb{R}^n} \left[\int_0^\infty\left|T_\lambda g(x) \right|^{2}\, \frac{d \lambda}{\lambda^{n+1}}\right]^{q/2}\, dx \\
 &= \int_{\mathbb{R}^n} \|Tg(x)\|^q_{\Lb^2\left(\mathbb{R}^{+}, \tfrac{d \lambda}{\lambda^{n+1}}\right)} \, dx\\
 &= \|Tg\|^q_{\Lb^q_\mathcal{X}(\mathbb{R}^n)}\\
 & \leq C_n \eta^q M_r^{q\delta}  \left(\|D\tau\|_\infty \left(\log\frac{\|\Delta \tau\|_\infty}{\|D\tau\|_\infty} \vee 1\right) + \|D^2\tau\|_\infty \right)^{q(1-\delta)} \|f\|_q^q.
 \end{align*} Thus, the proof is complete.
 \end{proof}
 
\begin{corollary} Let $1 < q < 2$ . Assume $\psi$ and its first and second order derivatives have decay in $O((1+|x|)^{-n-3})$, and $\int_{\mathbb{R}^n} \psi(x)\ dx=0$. Then for every $\tau \in C^2(\mathbb{R}^n)$ with $\norm{D\tau}_{\infty} < \frac{1}{2n}$, there exist constants $C_{n,m}, \hat{C}_{n,m} > 0$ such that 
$$\|S_{\text{cont},q}^m f - S_{\text{cont},q}^m L_\tau f \|_{\Lb^2(\R_{+}^m)}^q \leq   C_{n,m} \left[\|D\tau\|_\infty^q + \eta^q M_r^{q\delta}  \left(\|D\tau\|_\infty \left(\log\frac{\|\Delta \tau\|_\infty}{\|D\tau\|_\infty} \vee 1\right) + \|D^2\tau\|_\infty \right)^{q(1-\delta)}\right] \|f\|_q^q$$
and
$$\|S_{\text{dyad},q}^m f - S_{\text{dyad},q}^m L_\tau f \|_{\ell^2(\Z^m)}^q \leq   \hat{C}_{n,m} \left[\|D\tau\|_\infty^q + \eta^q M_r^{q\delta}  \left(\|D\tau\|_\infty \left(\log\frac{\|\Delta \tau\|_\infty}{\|D\tau\|_\infty} \vee 1\right) + \|D^2\tau\|_\infty \right)^{q(1-\delta)}\right] \|f\|_q^q.$$
\end{corollary}

\begin{remark}
This bound is not exactly the same as the definition for stability to diffeomorphisms in \cite{mallat2012group}, but the idea is similar. Since $r$ is fixed, so is $\delta$. It is easy to confirm that $\delta = \tfrac{1}{1+q} \in \left(\tfrac{1}{3},\tfrac{1}{2}\right)$ when using Marcinkiewicz interpolation in Lemma \ref{lemma: interpolate}, so 
$$C_{n,q} \eta^q M_r^{q\delta}  \left(\|D\tau\|_\infty \left(\log\frac{\|\Delta \tau\|_\infty}{\|D\tau\|_\infty} \vee 1\right) + \|D^2\tau\|_\infty \right)^{q(1-\delta)} \to 0$$
when $\|D\tau\|_{\infty} \to 0$ and $\|D^2\tau\|_{\infty} \to 0$.
\end{remark} 

\section{Equivariance and Invariance to Rotations}
We now consider adding group actions to our scattering transform and prove invariance to rotations. Let $\text{SO}(n)$ be the group of $n \times n$ rotation matrices. Since $\text{SO}(n)$ is a compact Lie group, we can define a Haar measure, say $\mu$, with $\mu(\text{SO}(n)) < \infty$. We say that $f \in \Lb^2(\text{SO}(n))$ if and only if $f$ is $\mu$-measurable and $\int_{\text{SO}(n)} |f(r)|^2 \, d\mu(r) < \infty$.

\subsection{Rotation Equivariant Representations}
Let $\psi:\mathbb{R}^n \to \mathbb{R}$ be a wavelet. Define
\begin{equation*}
    \psi_{\lambda, R}(x) = \lambda^{-n/2} \psi(\lambda^{-1}R^{-1} x),
\end{equation*}
where $R \in \text{SO}(n)$ is a $n \times n$ rotation matrix. The continuous and dyadic wavelet transforms of $f$ are given by
\begin{align*}
\Wc_{\text{Rot}} f &:= \{f \ast \psi_{\lambda, R} (x): x \in \mathbb{R}^n, \lambda \in (0, \infty), R \in \text{SO}(n)\},\\
W_\text{Rot}f &:= \{f \ast \psi_{j, R} (x): x \in \mathbb{R}^n, j \in \mathbb{Z}, R \in \text{SO}(n)\}.
\end{align*}
We will first consider a translation invariant and rotation equivariant formulation of continuous and dyadic one-layer scattering using
\begin{align*}
\mathfrak{S}_{\text{cont},q}f(\lambda, R) &:= \| f \ast \psi_{\lambda, R}\|_{q}, \\  
\mathfrak{S}_{\text{dyad},q}f(j, R) &:= \| f \ast \psi_{j, R}\|_{q}.
\end{align*}

The translation invariance of our representation follows from translation invariance of the norm. For rotation equivariance, notice that if $f_{\tilde R}(x) := f(\tilde R^{-1}x)$, then we have
\begin{align*}
\mathfrak{S}_{\text{cont},q} f_{\tilde R}(\lambda, R) &= \mathfrak{S}_{\text{cont},q} f(\lambda, \tilde{R}^{-1}R), \\
\mathfrak{S}_{\text{dyad},q}f_{\tilde R}(j, R) &= \mathfrak{S}_{\text{dyad},q} f(j, \tilde{R}^{-1}R). 
\end{align*}
Now suppose we have $m$ layers again. Then we define our $m$ layer transforms by
\begin{align*}
\mathfrak{S}_{\text{cont},q}^mf(\lambda_1, \ldots, \lambda_m, R_1, \ldots, R_m)&:= \| |f \ast \psi_{\lambda_1, R_1}| \ast \dots |\ast \psi_{\lambda_m, R_m}\|_{q},\\
\mathfrak{S}_{\text{dyad},q}^mf(j_1, \ldots, j_m, R_1, \ldots, R_m)&:= \| |f \ast \psi_{j_1, R_1}| \ast \dots |\ast \psi_{j_m, R_m}\|_{q}.
\end{align*}
and rotation equivariance implies 
\begin{align*}
\mathfrak{S}_{\text{cont},q}^mf_{\tilde{R}}(\lambda_1, \ldots, \lambda_m, R_1, \ldots, R_m) &= \mathfrak{S}_{\text{cont},q}^mf(\lambda_1, \ldots, \lambda_m, \tilde{R}^{-1}R_1, \ldots, \tilde{R}^{-1}R_m), \\
\mathfrak{S}_{\text{dyad},q}^mf_{\tilde{R}}(j_1, \ldots, j_m, R_1, \ldots, R_m) &= \mathfrak{S}_{\text{dyad},q}^mf(j_1, \ldots, j_m, \tilde{R}^{-1}R_1, \ldots, \tilde{R}^{-1}R_m).
\end{align*}

The norm we will use is similar to our previous formulations. Denote the scattering norm for the continuous transform as
\begin{equation*}
\resizebox{\textwidth}{!}{
$\|\mathfrak{S}_{\text{cont},q}^mf\|_{\Lb^2(\mathbb{R}_+^m) \times \text{SO}(n)^m}^q := \left(\int_0^{\infty}\int_{\text{SO}(n)}\dots\int_0^{\infty}\int_{\text{SO}(n)}  \| |f \ast \psi_{j_1, R_1}| \ast \dots |\ast \psi_{j_m, R_m}\|^{2}_{q} d\mu_1(R_1)\, \frac{d\lambda_1}{\lambda_1^{n+1}} \dots d\mu_m(R_n)\,\frac{d\lambda_m}{\lambda_m^{n+1}}.\right)^{q/2}$}
\end{equation*}
For the dyadic transform, we denote the norm using 
$$\|\mathfrak{S}_{\text{dyad},q}^mf\|_{\ellb^2(\Z^m) \times \text{SO}(n)^m}^q := \left(\sum_{j_m \in \mathbb{Z}}\int_{\text{SO}(n)}\dots\sum_{j_1 \in \mathbb{Z}}\int_{\text{SO}(n)} \| |f \ast \psi_{j_1, R_1}| \ast \dots |\ast \psi_{j_m, R_m}\|^{2}_{q} d\mu_1(R_1)\, \dots d\mu_m(R_n)\right)^{q/2}.$$
We will start by proving that these formulations of the scattering transform are well defined, and prove properties about stability to diffeomorphisms like in previous sections.
\begin{lemma}
Let $\psi$ be a wavelet that satisfies properties \eqref{eqn: decay condition} and \eqref{eqn: holder condition}.
\begin{itemize}
    \item If $1 < q \leq 2$, we have $\mathfrak{S}_{\text{cont},q}^m:\Lb^q(\mathbb{R}^n) \rightarrow \Lb^2(\mathbb{R}^m_+) \times{\text{SO}(n)^m}$ and 
$\mathfrak{S}_{\text{dyad},q}^m:\Lb^q(\mathbb{R}^n) \rightarrow \ell^2(\mathbb{Z}^m) \times{\text{SO}(n)^m}$. 
     \item If $q  = 1$ and one of the following holds:
    \begin{itemize}
        \item $n = 1$ and $\psi$ is complex analytic,
        \item $n \geq 2$ and $\psi$ satisfies the conditions of Lemma \ref{lem: riesz transform decay},
    \end{itemize} then $\mathfrak{S}_{\text{cont},1}^m:\Lb^1(\mathbb{R}^n) \rightarrow \Lb^2(\mathbb{R}^m_+) \times{\text{SO}(n)^m}$ and 
$\mathfrak{S}_{\text{dyad},1}^m:\Lb^1(\mathbb{R}^n) \rightarrow \ell^2(\mathbb{Z}^m) \times{\text{SO}(n)^m}$.
    \item If $\psi$ is also a Littlewood-Paley wavelet, we have
    \begin{align*}
    \|\mathfrak{S}_{\text{cont},2}^mf\|_{\Lb^2(\mathbb{R}_+^m) \times \text{SO}(n)^m}^2 &= \mu(\text{SO}(n))^m C_\psi^m \|f\|_2^2,\\
    \|\mathfrak{S}_{\text{dyad},q}^mf\|_{\ellb^2(\mathbb{Z}^m) \times \text{SO}(n)^m}^2 &= \mu(\text{SO}(n))^m \hat{C}_\psi^m \|f\|_2^2.
    \end{align*}
\end{itemize}
\end{lemma}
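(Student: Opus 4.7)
The plan is to reduce each assertion to the already-established non-rotational versions (Theorems \ref{thm: m-layer Lq bounded norm} and \ref{thm: m-layer L1 bounded norm}, and Proposition \ref{prop:L1MapMultipleLayersInversion}) by treating the rotation variable as an extra parameter over a \emph{finite} measure space $\text{SO}(n)$. Two observations do the work. First, for any fixed $R\in\text{SO}(n)$ the rotated wavelet $\psi^R(x):=\psi(R^{-1}x)$ satisfies the same hypotheses as $\psi$ with the same constants: the bounds \eqref{eqn: decay condition} and \eqref{eqn: holder condition} depend only on $|x|$ and $|h|$, both of which are rotation-invariant; the Littlewood-Paley admissibility \eqref{def: continuous littlewood paley admissible} applied at $\omega'=R^{-1}\omega$ is again rotation-invariant since $\int_0^\infty |\hpsi(\lambda R^{-1}\omega)|^2\,d\lambda/\lambda=\mathcal{C}_\psi$; and the ``$n+3$ vanishing moments'' property, recast as $\hpsi(\omega)=(i\omega_k)^{n+3}\hat{\theta}(\omega)$ with $\hat\theta\in\Lb^\infty$, transfers to $\widehat{\psi^R}(\omega)=(i(Re_k)\cdot\omega)^{n+3}\hat\theta(R^{-1}\omega)$, giving the same vanishing order at the origin with the same $\Lb^\infty$ bound on the remainder. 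Consequently, the constants $C$ arising in Theorems \ref{thm: m-layer Lq bounded norm}, \ref{thm: m-layer L1 bounded norm}, and Proposition \ref{prop:L1MapMultipleLayersInversion} can be chosen uniformly in the rotation parameters.

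Second, since $\psi_{\lambda,R}=(\psi^R)_\lambda$, the inner norm $\||f\ast\psi_{\lambda_1,R_1}|\ast\cdots|\ast\psi_{\lambda_m,R_m}\|_q^q$ coincides with the $m$-layer transform at scales $(\lambda_1,\ldots,\lambda_m)$ built from the wavelets $\psi^{R_1},\ldots,\psi^{R_m}$. For the $1<q\leq 2$ continuous case, Theorem \ref{thm: m-layer Lq bounded norm} applied to this family gives, for each fixed $(R_1,\ldots,R_m)$,
\begin{equation*}
\int_0^\infty\!\!\cdots\!\int_0^\infty \||f\ast\psi_{\lambda_1,R_1}|\ast\cdots|\ast\psi_{\lambda_m,R_m}\|_q^{qp}\,\frac{d\lambda_1}{\lambda_1^{n+1}}\cdots\frac{d\lambda_m}{\lambda_m^{n+1}}\leq C^m\|f\|_q^{qp}.
\end{equation*}
Integrating against $d\mu(R_1)\cdots d\mu(R_m)$ using Fubini yields
$\|\mathfrak{S}_{\text{cont},q}^m f\|^p_{\Lb^p(\mathbb{R}_+^m)\times\text{SO}(n)^m}\leq \mu(\text{SO}(n))^m C^m\|f\|_q^{qp}$. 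The dyadic statement follows from the same argument with integrals over $\mathbb{R}_+$ replaced by sums over $\mathbb{Z}$, invoking the dyadic half of Theorem \ref{thm: m-layer Lq bounded norm}.

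For the $q=1$ case, the argument is identical once one has checked (as above) that $\psi^R$ retains the $n+3$ vanishing moments needed to apply Theorem \ref{thm: m-layer L1 bounded norm}; the bound $\|\cdot\|_{\Hb^1(\mathbb{R}^n)}$ replaces $\|\cdot\|_{q}^q$, and one again integrates a uniform-in-$R$ inequality over the Haar measure. Finally, for the Littlewood-Paley identities, Proposition \ref{prop:L1MapMultipleLayersInversion} applied with the wavelet $\psi^R$ produces \emph{equality}
\begin{equation*}
\int_0^\infty\!\!\cdots\!\int_0^\infty \||f\ast\psi_{\lambda_1,R_1}|\ast\cdots|\ast\psi_{\lambda_m,R_m}\|_2^2\,\frac{d\lambda_1}{\lambda_1^{n+1}}\cdots\frac{d\lambda_m}{\lambda_m^{n+1}}=C_\psi^m\|f\|_2^2,
\end{equation*}
whose right-hand side is independent of $(R_1,\ldots,R_m)$; integrating against the Haar measure multiplies by exactly $\mu(\text{SO}(n))^m$, yielding the claimed formula, and the dyadic analogue is obtained verbatim.

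The only genuinely non-routine step is verifying that the ``$n+3$ vanishing moments'' hypothesis, stated in a coordinate-dependent way in Proposition \ref{prop: two layer L1 bounded norm}, is preserved under rotation; this is where the Fourier-side reformulation is essential. All other steps are an interchange of order of integration supported by the uniformity (in $R$) of the constants in the earlier bounds.
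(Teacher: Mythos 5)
Your proposal is correct and follows essentially the same route as the paper: apply the non-rotational boundedness results (and the Littlewood--Paley identity) for each fixed tuple of rotations, note that the constants are uniform in $R$ because rotated wavelets satisfy \eqref{eqn: decay condition}, \eqref{eqn: holder condition}, and the admissibility condition with the same constants, and then integrate over the finite Haar measure via Fubini to pick up the factor $\mu(\text{SO}(n))^m$. The one place you go beyond the paper is in explicitly verifying on the Fourier side that the coordinate-dependent ``$n+3$ vanishing moments'' hypothesis is preserved under rotation, a point the paper passes over when it omits the proof of the second bullet.
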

\begin{proof}
We prove the first and third claim. The second claim is almost identical to the first claim, so the proof will be omitted for brevity. Note that we will only provide arguments for the continuous scattering transform since the proofs for the dyadic transform are very similar. By Fubini Theorem and boundedness of the $m$-layer scattering transform, there exists a constant $C_q >0$, which is dependent on $q$, such that 
\begin{align*}
&\|\mathfrak{S}_{\text{cont},q}^mf\|_{\Lb^2(\mathbb{R}_+^m) \times \text{SO}(n)^m}^q \\
&= \left[\int_0^{\infty}\int_{\text{SO}(n)}\dots\int_0^{\infty}\int_{\text{SO}(n)}  \| |f \ast \psi_{\lambda_1, R_1}| \ast \dots |\ast \psi_{\lambda_m, R_m}\|^{2}_{q} d\mu(R_m)\, \frac{d\lambda_1}{\lambda_1^{n+1}} \cdots d\mu(R_1)\,\frac{d\lambda_m}{\lambda_m^{n+1}}\right]^{q/2} \\
&= \left[\int_{\text{SO}(n)}\dots \int_{\text{SO}(n)} \left(\int_0^{\infty} \cdots \int_0^{\infty}  \| |f \ast \psi_{\lambda_1, R_1}| \ast \dots |\ast \psi_{\lambda_m, R_m}\|^{2}_{q} \,\frac{d\lambda_1}{\lambda_1^{n+1}} \cdots \frac{d\lambda_m}{\lambda_m^{n+1}}\right)^{\frac{q}{2}\cdot \frac{2}{q}} \, d\mu(R_1)\,  \cdots d\mu(R_m)\right]^{q/2} \\
&\leq  \left[\int_{\text{SO}(n)} \cdots \int_{\text{SO}(n)} (C_q^{mq} \|f\|_q^q)^{2/q} \, d\mu(R_1) \cdots d\mu(R_m)\right]^{q/2}\\
&= C_q^{mq} \mu(\text{SO}(n))^{mq/2} \|f\|_q^q
\end{align*} because each $\psi_{\lambda_i, R_i}$ is still a wavelet with sufficient decay even if the rotation is applied. For the third claim, we see that
\begin{align*}
&\|\mathfrak{S}_{\text{cont},2}^mf\|_{\Lb^2(\mathbb{R}_+^m) \times \text{SO}(n)^m}^2 \\
&=\int_{\text{SO}(n)}\cdots\int_{\text{SO}(n)}\left(\int_0^{\infty}\cdots\int_0^{\infty}\| |f \ast \psi_{\lambda_1, R_1}| \ast \dots |\ast \psi_{\lambda_m, R_m}\|^2_{\Lb^2(\mathbb{R}^n)}  \frac{d\lambda_1}{\lambda_1^{n+1}}\cdots \frac{d\lambda_m}{\lambda_m^{n+1}}\right) d\mu(R_1) \cdots d\mu(R_n) \\
&=  \int_{\text{SO}(n)} \cdots \int_{\text{SO}(n)} C_\psi ^m \|f\|_2^2 \, d\mu(R_1) \cdots d\mu(R_m)\\
&=  \mu(\text{SO}(n))^m C_\psi^m\|f\|_2^2.
\end{align*}
\end{proof}

\begin{theorem}
Assume $|c|< \tfrac{1}{2n}$. Let $\tau(x) = cx$ and $L_\tau f(x) = f((1-c)x)$. Suppose that $\psi$ is a wavelet that satisfies the conditions of Lemma \ref{prop: newwavelet}. Then there exist constants $\tilde{K}_{n,m,q}$ and $\tilde{K}_{n,m,q}'$ dependent only on $n$, $m$, and $q$ such that
\begin{equation*}
    \|\mathfrak{S}_{\text{cont},q}^mf - \mathfrak{S}_{\text{cont},q}^mL_\tau f \|_{\Lb^2(\R_+^m) \times \text{SO}(n)^m}^q \leq |c|^q \cdot \tilde{K}_{n,m,q} \|f\|_q^q
\end{equation*}
and 
\begin{equation*}
    \|\mathfrak{S}_{\text{dyad},q}^mf - \mathfrak{S}_{\text{dyad},q}^mL_\tau f \|_{\ellb^2(\Z^m) \times \text{SO}(n)^m}^q \leq |c|^q \cdot \tilde{K}_{n,m,q}' \|f\|_q^q.
\end{equation*}
Alternatively, if one of the following holds:
\begin{itemize}
    \item $n = 1$, $\psi$ is complex analytic and satisfies the conditions of Lemma \ref{prop: newwavelet},
    \item $n \geq 2$ and $\psi$ satisfies the conditions of Lemma \ref{lem: riesz transform decay},
\end{itemize} there exist $\tilde{H}_{m,n}$ and $\tilde{H}_{m,n}'$ such that
\begin{equation*}
    \|\mathfrak{S}_{\text{cont},1}^mf - \mathfrak{S}_{\text{cont},1}^mL_\tau f \|_{\Lb^2(\R_+^m) \times \text{SO}(n)^m} \leq |c| \cdot \tilde{H}_{m,n} \|f\|_{\mathbb{H}^1(\mathbb{R}^n)}.
\end{equation*}
and 
\begin{equation*}
    \|\mathfrak{S}_{\text{dyad},1}^mf - \mathfrak{S}_{\text{dyad},1}^mL_\tau f \|_{\ellb^2(\Z^m) \times \text{SO}(n)^m} \leq |c| \cdot \tilde{H}_{m,n}' \|f\|_{\mathbb{H}^1(\mathbb{R}^n)}
\end{equation*}
\end{theorem}

\begin{theorem}
Let $\tau \in C^2(\mathbb{R}^n)$, and let $L_\tau f(x) = f(x - \tau(x))$. Suppose that $\psi$ is a wavelet such that the wavelet and all its first and second partial derivatives have $O((1+|x|)^{-n-3})$ decay. When $q \in (1,2)$, there exists a constant $C_{n,m,q}$ dependent on $\mu(\text{SO}(n))$, $n$, $m$, and $q$ such that
\begin{equation*}
\resizebox{\textwidth}{!}{
    $\|\mathfrak{S}_{\text{cont},q}^mf - \mathfrak{S}_{\text{cont},q}^mL_\tau f \|_{\Lb^2(\R_+^m) \times \text{SO}(n)^m}^q \leq C_{n,m,q} \left[ \|D\tau\|_\infty^q + \eta^q M_r^{q\delta}  \left(\|D\tau\|_\infty \left(\log\frac{\|\Delta \tau\|_\infty}{\|D\tau\|_\infty} \vee 1\right) + \|D^2\tau\|_\infty \right)^{q(1-\delta)}\right] \|f\|_q^q,$}
\end{equation*}
\begin{equation*}
    \resizebox{\textwidth}{!}{
$\|\mathfrak{S}_{\text{dyad},q}^mf - \mathfrak{S}_{\text{dyad},q}^mL_\tau f \|_{\ellb^2(\Z^m) \times \text{SO}(n)^m}^q \leq \tilde{C}_{n,m,q}\left[ \|D\tau\|_\infty^q + \eta^q M_r^{q\delta}  \left(\|D\tau\|_\infty \left(\log\frac{\|\Delta \tau\|_\infty}{\|D\tau\|_\infty} \vee 1\right) + \|D^2\tau\|_\infty \right)^{q(1-\delta)}\right] \|f\|_q^q,$}
\end{equation*}
\begin{equation*}
    \|\mathfrak{S}_{\text{cont},2}^mf - \mathfrak{S}_{\text{cont},2}^mL_\tau f \|_{\Lb^2(\R_+^m) \times \text{SO}(n)^m}^2 \leq C_{n,m} \left[ \|D\tau\|_\infty^2  +   \left(\|D\tau\|_\infty \left(\log\frac{\|\Delta \tau\|_\infty}{\|D\tau\|_\infty} \vee 1\right) + \|D^2\tau\|_\infty \right)^{2}\right] \|f\|_2^2,
\end{equation*}
\begin{equation*}
    \|\mathfrak{S}_{\text{dyad},2}^mf -\mathfrak{S}_{\text{dyad},2}^mL_\tau f \|_{\ell^2(\Z^m) \times \text{SO}(n)^m}^2 \leq \tilde{C}_{n,m} \left[ \|D\tau\|_\infty^2 +   \left(\|D\tau\|_\infty \left(\log\frac{\|\Delta \tau\|_\infty}{\|D\tau\|_\infty} \vee 1\right) + \|D^2\tau\|_\infty \right)^{2}\right] \|f\|_2^2.
\end{equation*}
\end{theorem}

\subsection{Rotation Invariant Representations}
The representation before was rotation equivariant, but in some tasks, we would rather have rotation invariance. In \cite{mallat2012group}, the authors choose to integrate over each group action in a group of transformations. However, this will remove the information the relative angles between each action if we have multiple layers in our transform. 

In the case of one layer, since there is only one angle, we use a similar formulation to \cite{mallat2012group} and define continuous and dyadic scattering transforms for rotation invariance as 
\begin{align*}
\mathscr{S}_{\text{cont},q} f(\lambda) &= \int_{\text{SO}(n)} \|f \ast \psi_{\lambda, R}\|^q_{\Lb^q(\mathbb{R}^n)} d\mu(R),\\
\mathscr{S}_{\text{dyad},q} f(j) &= \int_{\text{SO}(n)} \|f \ast \psi_{j, R}\|^q_{\Lb^q(\mathbb{R}^n)} d\mu(R).
\end{align*}
The corresponding norms are given by
\begin{align*}
\|\mathscr{S}_{\text{cont},q}f\|_{\Lb^2(\mathbb{R}_+)}^q &:= \left[\int_0^\infty \left[\int_{\text{SO}(n)} \|f \ast \psi_{\lambda, R}\|_{q} \mu(R)\right]^{2/q} \, \frac{d \lambda}{\lambda^{n+1}}\right]^{q/2}, \\
\|\mathscr{S}_{\text{dyad},q}f\|_{\ell^2(\mathbb{Z})}^q &:= \left[\sum_{j \in \mathbb{Z}}\left[\int_{\text{SO}(n)} \|f \ast \psi_{j, R}\|_{q} \mu(R)\right]^{2/q}\right]^{q/2}.
\end{align*}

Now we generalize to the case where $m \geq 2.$ Let $R_1, \ldots, R_m \in \text{SO}(n)$. Define
\begin{align*}
\mathscr{S}_{\text{cont},q}^mf(\lambda_1, \ldots, \lambda_m, R_2, \ldots, R_m)&:= \int_{\text{SO}(n)}\| |f \ast \psi_{\lambda_1, R_2R_1}| \ast \dots \ast |\psi_{\lambda_m, R_mR_1}\|^2_{q} \,d\mu(R_1),\\
\mathscr{S}_{\text{dyad},q}^mf(j_1, \ldots, j_m, R_2, \ldots, R_m)&:= \int_{\text{SO}(n)} \| |f \ast \psi_{j_1, R_2R_1}| \ast \dots |\ast \psi_{j_m, R_mR_1}\|^2_{q}\,  d\mu(R_1).
\end{align*}
The norm for the continuous transform, the norm $\|\mathscr{S}_{\text{cont},q}^mf\|_{\Lb^2(\mathbb{R}_+^m) \times \text{SO}(n)^{m-1}}^q$, is given by
$$\left(\int_0^{\infty}\int_{\text{SO}(n)} \cdots \int_0^{\infty}\int_{\text{SO}(n)} \int_0^{\infty}\mathscr{S}_{\text{cont},q}^mf(\lambda_1, \ldots, \lambda_m, R_2, \ldots, R_m) \frac{d\lambda_1}{\lambda_1^{n+1}} \, d\mu_2(R_2)\,\frac{d\lambda_2}{\lambda_2^{n+1}} \dots d\mu_m(R_m)\,\frac{d\lambda_m}{\lambda_m^{n+1}}\right)^{q/2}$$
For the dyadic transform, the norm $\|\mathscr{S}_{\text{dyad},q}^mf\|_{\ell^2(\mathbb{Z}) \times \text{SO}(n)^{m-1}}^q$ is given by
$$\left(\sum_{j_m \in \mathbb{Z}}\int_{\text{SO}(n)}\dots\sum_{j_2 \in \mathbb{Z}}\int_{\text{SO}(n)}\sum_{j_1 \in \mathbb{Z}}\mathscr{S}_{\text{dyad},q}^mf(\lambda_1, \ldots, \lambda_m, R_2, \ldots, R_m)  d\mu_1(R_1) \, d\mu_2(R_2)\, \dots d\mu_m(R_m)\right)^{q/2}.$$
Like before, we will discuss the well-definedness and stability of these operators to diffeomorphisms. The proofs will be omitted since they follow directly from the previous sections with minor modifications.

\begin{lemma}
Let $\psi$ be a wavelet that satisfies properties \eqref{eqn: decay condition} and \eqref{eqn: holder condition}.
\begin{itemize}
    \item If $1 < q \leq 2$, we have $\mathscr{S}_{\text{cont},q}^m:\Lb^q(\mathbb{R}^n) \rightarrow \Lb^2(\mathbb{R}^m_+) \times{\text{SO}(n)^{m-1}}$ and 
$\mathscr{S}_{\text{dyad},q}^m:\Lb^q(\mathbb{R}^n) \rightarrow \ell^2(\mathbb{Z}^m) \times{\text{SO}(n)^{m-1}}$. 
    \item If $q  = 1$ and one of the following holds:
    \begin{itemize}
        \item $n = 1$ and $\psi$ is complex analytic,
        \item $n \geq 2$ and $\psi$ satisfies the conditions of Lemma \ref{lem: riesz transform decay},
    \end{itemize} then $\mathscr{S}_{\text{cont},1}^m:\Lb^1(\mathbb{R}^n) \rightarrow \Lb^2(\mathbb{R}^m_+) \times{\text{SO}(n)^{m-1}}$ and 
$\mathscr{S}_{\text{dyad},1}^m:\Lb^1(\mathbb{R}^n) \rightarrow \ell^2(\mathbb{Z}^m) \times{\text{SO}(n)^{m-1}}$. 
    \item If $q = 2$ and $\psi$ is also a littlewood paley wavelet, we have $\|\mathscr{S}_{\text{dyad},2}^mf\|_{\ellb^1(\Z^m) \times \text{SO}(n)^{m-1}} = \mu(\text{SO}(n))^{m-1} C_\psi^m \|f\|_2^2$ and $\|\mathscr{S}_{\text{cont},2}^mf\|_{\Lb^1(\mathbb{R}_+^{m}) \times \text{SO}(n)^{m-1}} = \mu(\text{SO}(n))^{m-1} \hat{C}_\psi^m \|f\|_2^2$.
\end{itemize}
\end{lemma}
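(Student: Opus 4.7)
The plan is to reduce each of the three assertions to the corresponding result for the rotation-equivariant operators $\mathfrak{S}_{\text{cont},q}^m$ and $\mathfrak{S}_{\text{dyad},q}^m$ by using Jensen's inequality to absorb the inner rotation integral into the $p$-th power, then invoking Fubini. I will write out the continuous case; the dyadic case is identical with integrals over scales replaced by sums over $\Z$.

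For parts one and two, fix $q \in [1, 2]$ with $qp = 2$. Since $p \geq 1$, Jensen's inequality applied to the probability measure $d\mu / \mu(\text{SO}(n))$ on $\text{SO}(n)$ gives
\begin{equation*}
\left[\int_{\text{SO}(n)} \| |f \ast \psi_{\lambda_1, R_2 R_1}| \ast \cdots \ast |\psi_{\lambda_m, R_m R_1}|\|_q^q \, d\mu(R_1)\right]^p \leq \mu(\text{SO}(n))^{p-1} \int_{\text{SO}(n)} \| |f \ast \psi_{\lambda_1, R_2 R_1}| \ast \cdots \ast |\psi_{\lambda_m, R_m R_1}|\|_q^{qp} \, d\mu(R_1).
\end{equation*}
Next I would integrate this estimate against the product measure $d\mu(R_2) \cdots d\mu(R_m) \cdot \lambda_1^{-n-1} d\lambda_1 \cdots \lambda_m^{-n-1} d\lambda_m$ and apply Fubini to bring the $R_1$ integral to the outside. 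Because $\mu$ is the Haar measure, the substitution $R_k \mapsto R_k R_1^{-1}$ for $k = 2, \ldots, m$ in the inner integrals leaves the measure invariant, so the resulting integrand is exactly $\| \mathfrak{S}_{\text{cont},q}^m f \|_{\Lb^p(\R_+^m) \times \text{SO}(n)^m}^p$ for each fixed $R_1$, up to relabeling. Applying the rotation-equivariant bounds from the previous subsection then yields
\begin{equation*}
\|\mathscr{S}_{\text{cont},q}^m f\|_{\Lb^p(\R_+^m) \times \text{SO}(n)^{m-1}}^p \leq \mu(\text{SO}(n))^{p} \cdot C_q^m \cdot \mu(\text{SO}(n))^m \|f\|_q^{qp},
\end{equation*}
which is the desired boundedness. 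For part two ($q = 1$, $p = 2$), the same scheme works verbatim, using instead the bound $\|\mathfrak{S}_{\text{dyad},1}^m f\|_{\ellb^2(\Z^m) \times \text{SO}(n)^m} \leq C_m \|f\|_{\Hb^1(\R^n)}$ which required the $n+3$ vanishing moments hypothesis.

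For part three, here $q = 2$ and $p = 1$, so Jensen's inequality is not needed; the result is an equality. Applying Fubini directly to pull all the rotation integrals outside and using the Littlewood-Paley identity in each layer exactly as in the proof of Proposition~\ref{prop:L1MapMultipleLayersInversion} gives
\begin{equation*}
\|\mathscr{S}_{\text{cont},2}^m f\|_{\Lb^1(\R_+^m) \times \text{SO}(n)^{m-1}} = \int_{\text{SO}(n)^m} C_\psi^m \|f\|_2^2 \, d\mu(R_1) \cdots d\mu(R_m) = \mu(\text{SO}(n))^m C_\psi^m \|f\|_2^2,
\end{equation*}
since the Littlewood-Paley admissibility condition is rotation invariant (so that each rotated dilated wavelet family still reproduces the identity up to the same constant $C_\psi$).

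The main obstacle is the bookkeeping for the variable change $R_k \mapsto R_k R_1^{-1}$ inside the inner integrals, which is what allows the rotation-equivariant bound to apply. Once this reduction is made explicit, everything else is a routine application of Jensen, Fubini, and the previously established bounds; the only new feature compared to the rotation-equivariant case is the accumulation of an additional factor of $\mu(\text{SO}(n))^{p}$.
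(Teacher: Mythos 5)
Your proposal is correct and follows exactly the route the paper itself indicates (the paper omits the proof but states that it ``follows directly from the previous sections by using Jensen's inequality to take in the $p^{\text{th}}$ power and accumulating a power of $\mu(\text{SO}(n))$ in the final constant''): Jensen on the inner $R_1$-average, Fubini, Haar invariance under $R_k \mapsto R_k R_1^{-1}$ to reduce to the rotation-equivariant bounds, and the degenerate case $p=1$, $q=2$ giving equality via the Littlewood--Paley identity. The only caveat is minor bookkeeping of the exact power of $\mu(\text{SO}(n))$ in the final constant, which is immaterial since it is a finite factor.
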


\begin{theorem}
Assume $|c| < \tfrac{1}{2n}$and $1 < q < 2$. Let $\tau(x) = cx$ and let $L_\tau f(x) = f((1-c)x)$. Suppose that $\psi$ is a wavelet that satisfies the conditions of Lemma \ref{prop: newwavelet}. Then there exist constants $\hat{K}_{n,m,q}$ and $\hat{K}_{n,m,q}'$ dependent only on $n$, $m$, and $q$ such that
\begin{equation*}
    \|\mathscr{S}_{\text{cont},q}^mf - \mathscr{S}_{\text{cont},q}^mL_\tau f \|_{\Lb^2(\R_+^m) \times \text{SO}(n)^{m-1}}^q \leq |c|^q \cdot \hat{K}_{n,m,q} \|f\|_q^q
\end{equation*}
and 
\begin{equation*}
    \|\mathscr{S}_{\text{dyad},q}^mf - \mathscr{S}_{\text{dyad},q}^mL_\tau f \|_{\ellb^2(\Z^m) \times \text{SO}(n)^{m-1}}^q \leq |c|^q \cdot \hat{K}_{n,m,q}' \|f\|_q^q.
\end{equation*}
Additionally, if $q = 1$ and one of the following holds:
\begin{itemize}
    \item $n = 1$, $\psi$ is complex analytic and satisfies the conditions of Lemma \ref{prop: newwavelet},
    \item $n \geq 2$ and $\psi$ satisfies the conditions of Lemma \ref{lem: riesz transform decay},
\end{itemize} there exist $\hat{H}_{m,n}$ and $\hat{H}_{m,n}'$ such that
\begin{equation*}
    \|\mathscr{S}_{\text{cont},1}^mf - \mathscr{S}_{\text{cont},1}^mL_\tau f \|_{\Lb^2(\R_+^m) \times \text{SO}(n)^{m-1}} \leq |c| \cdot \hat{H}_{m,n} \|f\|_{\mathbb{H}^1(\mathbb{R}^n)}
\end{equation*}
and 
\begin{equation*}
    \|\mathscr{S}_{\text{dyad},1}^mf - \mathscr{S}_{\text{dyad},1}^mL_\tau f \|_{\ellb^2(\Z^m) \times \text{SO}(n)^{m-1}} \leq |c| \cdot \hat{H}_{m,n}' \|f\|_{\mathbb{H}^1(\mathbb{R}^n)}.
\end{equation*}
\end{theorem}

\begin{theorem}
Let $\tau \in C^2(\mathbb{R}^n)$ and define $L_\tau f(x) = f(x - \tau(x))$ with $\|D\tau\|_\infty < \tfrac{1}{2n}$. Suppose that $\psi$ is a wavelet such that the wavelet and all its first and second partial derivatives have $O((1+|x|)^{-n-3})$ decay. For $q \in(1,2]$, there exist constants $C_{m,n}$, $\hat{C}_{m,n}$, $C_{m,n,q}$, and $\hat{C}_{m,n,q}$ such that
\begin{equation*}
    \|\mathscr{S}_{\text{cont},2}^mf - \mathscr{S}_{\text{cont},2}^mL_\tau f \|_{\Lb^2(\R_+^m)\times \text{SO}(n)^{m-1}}^2 \leq C_{m,n} \, \left(\norm{D\tau}_{\infty}^2+\left(\norm{D\tau}_{\infty}\left(\log\frac{\norm{\Delta\tau}_\infty}{\norm{D\tau}_\infty} \vee 1\right)+\norm{D^2\tau}_\infty\right)^2\right) \|f\|_2^2,
\end{equation*}
\begin{equation*}
    \|\mathscr{S}_{\text{dyad},2}^mf - \mathscr{S}_{\text{dyad},2}^mL_\tau f \|_{\ellb^2(\Z^m)\times \text{SO}(n)^{m-1}}^2 \leq \hat{C}_{m,n} \, \left(\norm{D\tau}_{\infty}^2+\left(\norm{D\tau}_{\infty}\left(\log\frac{\norm{\Delta\tau}_\infty}{\norm{D\tau}_\infty} \vee 1\right)+\norm{D^2\tau}_\infty\right)^2\right) \|f\|_2^2,
\end{equation*}
\begin{equation*}
    \resizebox{\textwidth}{!}{
    $\|\mathscr{S}_{\text{cont},q}f - \mathscr{S}_{\text{cont},q}^mL_\tau f \|_{\Lb^2(\R_+^m)\times \text{SO}(n)^{m-1}}^q \leq C_{m,n,q} \left[ \|D\tau\|_\infty^q + \eta^q M_r^{q\delta}  \left(\|D\tau\|_\infty \left(\log\frac{\|\Delta \tau\|_\infty}{\|D\tau\|_\infty} \vee 1\right) + \|D^2\tau\|_\infty \right)^{q(1-\delta)}\right] \|f\|_q^q,$}
\end{equation*}
\begin{equation*}
    \resizebox{\textwidth}{!}{
    $\|\mathscr{S}_{\text{dyad},q}^mf - \mathscr{S}_{\text{dyad},q}^mL_\tau f \|_{\ellb^2(\Z^m)\times \text{SO}(n)^{m-1}}^q \leq \hat{C}_{m,n,q} \, \left[ \|D\tau\|_\infty^q + \eta^q M_r^{q\delta}  \left(\|D\tau\|_\infty\left(\log\frac{\|\Delta \tau\|_\infty}{\|D\tau\|_\infty} \vee 1\right) + \|D^2\tau\|_\infty \right)^{q(1-\delta)}\right] \|f\|_q^q.$
    }
\end{equation*}

\end{theorem}

\section{Conclusion}
We have formulated operators that are translation invariant in $\Lb^q(\mathbb{R}^n)$, proven these operators are Lipschitz continuous to the action of $C^2$ diffeomorphisms when $1 < q \leq 2$ with respect to certain norms, and used these results to formulate rotation invariant/equivariant operators on $\Lb^q(\mathbb{R}^n)$ that are Lipschitz continuous to the action of $C^2$ diffeomorphisms. One question that was left unanswered was if Lipschitz continuity holds for general diffeomorphisms when $q = 1$. This question is harder to answer because $f \in \mathbf{H}^1(\mathbb{R}^n)$ does not necessarily imply that $L_\tau f\in \mathbf{H}^1(\mathbb{R}^n)$. The kernel for the commutator is also singular, which would mean one cannot use extension theorems for Hardy spaces. The answer is most likely no, but we did not construct a counterexample. 

\section{Acknowledgments}
We would like to thank Michael Perlmutter for providing suggestions that improved the clarity of our initial draft. We would also like to thank the anonymous reviewers for their careful feedback over multiple iterations. Lastly, we would like to thank Yang Yang for checking over a revision to the proof of Proposition 15.

\bibliographystyle{elsarticle-num}
\bibliography{paper.bib}

\end{document}